\newtheorem{thm}{Theorem}[section]
\newtheorem{lem}[thm]{Lemma}
\newtheorem{pro}[thm]{Proposition}
\newtheorem{cor}[thm]{Corollary}
\newtheorem*{rem}{Remark}
\theoremstyle{definition}
\newtheorem{definition}[thm]{Definition}
\newcommand\ddd{\mathrm{d}}
\newcommand\supp{\mathrm{supp}}
\newcommand\bR{\mathbb{R}}
\newcommand\bN{\mathbb{N}}
\newcommand\bZ{\mathbb{Z}}
\def \l {\left}
\def \r {\right}
\begin{document}
\title[On $L^p$ extremals for fractional surface]{On $L^p$ extremals for Fourier extension estimate to fractional surface}
\author{Boning Di}
\author{Ning Liu}
\author{Dunyan Yan}

\date{}
\thanks{This work was supported in part by the National Key R\&D Program of China [Grant No. 2023YFC3007303], the National Natural Science Foundation of China [Grant Nos. 12071052 \& 12271501] and the China Postdoctoral Science Foundation [Grant Nos. GZB20230812 \& 2024M753436].}
\subjclass[2020]{Primary 42B10; Secondary 42B37, 35B38, 35Q41.}
\keywords{Fourier restriction inequality, extremal function, profile decomposition, Strichartz inequality, fractional Schr\"odinger equation.}

\begin{abstract}
	This article investigates the Fourier extension operator associated with the fractional surface $(\xi,|\xi|^{\alpha})$ for $\alpha\geq 2$. We show that the relevant $L^p\to L^q$ Fourier extension inequality possesses extremals for all exponents $p\in[1,2]$. Moreover, for all $p\in(1,2]$, the corresponding $L^p$-extremal sequences are precompact up to symmetries.
\end{abstract}
\maketitle

\section{Introduction}
For $\alpha\geq 2$, we investigate the fractional surface $\mathbb{P}^d_{\alpha}=\{(\xi,|\xi|^{\alpha}): \xi \in\mathbb{R}^d\}$ in $\bR^{d+1}$, equipped with the following projection measure
\[\int_{\mathbb{R}^{d+1}} g(\xi, t) \mathrm{d}\sigma_{\mathbb{P}^d_{\alpha}}:=\int_{\mathbb{R}^d}g(\xi,|\xi|^{\alpha})\mathrm{d}\xi.\]
This serves as a natural generalization of the paraboloid, while one should notice that this fractional surface can have zero Gaussian curvature at the origin. The associated Fourier extension operator is given by
$$\mathcal{E}_{\alpha}f(x,t):=\int_{\mathbb{R}^d} e ^{i(x,t)(\xi,|\xi|^{\alpha})} f(\xi) \mathrm{d}\xi,\quad (x,t)\in\mathbb{R}^{d+1}.$$
In this setting, the Fourier restriction theory investigates whether $\mathcal{E}_{\alpha}$ is a bounded operator from $L^p(\bR^d)$ to $L^q(\bR^{d+1})$ for some pair $(d,p,q)$. Indeed, applying some rescaling and stationary phase arguments \cite[Chapter 8, page 363, Exercise 5.13]{Stein1993}, one concludes that the desired Fourier extension estimate
\begin{equation}\label{neq1}
	\Vert\mathcal{E}_{\alpha}f\Vert_{L^q(\mathbb{R}^{d+1})}\leq C_{\alpha,p}\Vert f\Vert_{L^p(\mathbb{R}^{d})}
\end{equation}
can only hold if the pair $(d, p,q)$ satisfies the following necessary condition
\begin{equation} \label{E:Necessary condition}
	q=\frac{d+\alpha}{d}p', \quad q>p.
\end{equation}

The typical paraboloid case $\alpha=2$ is the well-known Fourier restriction conjecture for the paraboloid. Up to now, this conjecture has been solved for dimension $d = 1$ and remains open
 for higher dimensions. For example, when $d=2$, this conjecture has been confirmed for $q>3+\frac{3}{14}$ by Wang-Wu \cite{WW2022}. For more related topics, one can refer to publications like \cite{Demeter2020,Tao2004}.
 
 For general fractional surfaces with $\alpha \geq 2$, the relevant Fourier restriction phenomena have also been studied by several mathematicians, see for instance the papers \cite{CKZ2013,KPV1991,Stovall2017}. Indeed, the classical work of Kenig-Ponce-Vega \cite{KPV1991} states that $\mathcal{E}_{\alpha}$ is a bounded operator from $L^2(\bR^d)$ to $L^{\frac{2d +2\alpha}{d}}(\bR^{d+1})$. Since the operator $\mathcal{E}_{\alpha}$ is obviously $L^1(\bR^d)$ to $L^{\infty}(\bR^{d+1})$ bounded, by interpolation, one can see that $\mathcal{E}_{\alpha}$ is bounded from $L^p(\mathbb{R}^d)$ to $L^{q}(\mathbb{R}^{d+1})$ for all $p\in[1,2]$ with $q=\frac{d+\alpha}{d}p'$.

Besides the boundedness of Fourier extension operators, the extremal functions and norms of these Fourier extension operators have received substantial attention recently. In other words, one could be interested in the sharp form of inequality \eqref{neq1} and the optimal constant 
$$M_{\alpha,p}:=\sup_{\Vert f\Vert_{L^p(\mathbb{R}^d)}=1}\Vert\mathcal{E}_{\alpha}f\Vert_{L^q(\mathbb{R}^{d+1})}.$$
A sequence of functions $\{f_n\}\subset L^p(\mathbb{R}^d)$ is called an \textit{extremal sequence} for $M_{\alpha,p}$ if it satisfies
$$\Vert f_n\Vert_{L^p(\mathbb{R}^d)}=1,\quad \lim_{n\to\infty}\Vert\mathcal{E}_{\alpha}f_n\Vert_{L^q(\mathbb{R}^{d+1})}=M_{\alpha,p}.$$
And a function $f\in L^p(\mathbb{R}^d)$ is called an \textit{extremal} for $M_{\alpha,p}$ if it satisfies
$$\Vert f\Vert_{L^p(\mathbb{R}^d)}=1,\quad \Vert\mathcal{E}_{\alpha}f\Vert_{L^q(\mathbb{R}^{d+1})}=M_{\alpha,p}.$$
For convenience, we introduce some further terminologies. First, the relevant \textit{symmetries} for the inequality \eqref{neq1} are space-time translation and scaling, defined as follows
$$\mathcal{G}(h_0,x_0,t_0)f(\xi):=h_0^{-\frac{d}{p}} e ^{- i (x_0,t_0)(\xi,|\xi|^{\alpha})}f\left(\frac{\xi}{h_0}\right),\quad (h_0,x_0,t_0)\in\mathbb{R}_+\times\mathbb{R}^{d}\times\mathbb{R}.$$
Notice that these symmetries $\mathcal{G}(h_0,x_0,t_0)$ preserve the norm in $L^p(\mathbb{R}^d)$ and
$$\Vert\mathcal{E}_{\alpha} \mathcal{G} (h_0,x_0,t_0) f\Vert_{L^q(\mathbb{R}^{d+1})}=\Vert\mathcal{E}_{\alpha}f\Vert_{L^q(\mathbb{R}^{d+1})}.$$
Denote the group generated by these symmetries as $G$. Then a sequence of functions $\{f_n\}\subset L^p(\mathbb{R}^d)$ is called \textit{precompact up to symmetries} if there exists a sequence of symmetries $\{\mathcal{G}_n\}\subset G$ such that $\{\mathcal{G}_n f_n\}$ has a convergent subsequence in $L^p(\mathbb{R}^d)$. For an extremal sequence, notice that the precompactness up to symmetries implies the existence of the extremal function.

For the case $p=2$, the extremal problems for Fourier extension operators have been well-studied. There are many approaches involved in these works such as concentration-compactness principle/missing mass method/profile decomposition argument \cite{BOQ2020,CS2012A&P,DY2023,FLS2016,FS2018,JSS2017,Kunze2003,Ramos2012,Shao2009EJDE,Shao2016}, Euler-Lagrange type equation \cite{Carneiro2009,CS2012Adv,Foschi2007,Foschi2015,HZ2006,JS2016}, geometric comparison principle \cite{BOQ2020,OQ2018,OQ2020}, bilinear restriction theory \cite{COSS2021,DY2024,FLS2016,JSS2017}. Indeed, the first and the third authors have summarized some of the relevant works in \cite{DY2023,DY2024}, and the reader can also refer to the survey papers \cite{FO2017,NOT2023,Oliveira2024} as well as the references therein for further details on recent progress.

The first $L^p$-based result is due to Christ-Quilodr\'an \cite{CQ2014} in 2014, who showed that Gaussian functions are not extremals when $p\neq \{1,2\}$, by investigating the associated Euler-Lagrange equation; then in 2020, Stovall \cite{Stovall2020} proved the precompactness of $L^p$-extremal sequences conditional on improvements toward the restriction conjecture, by generalizing the $L^2$-based profile decomposition argument in \cite{Shao2009EJDE}; after that, the strategy given by Stovall is also used in investigating other surfaces or curves such as the moment curve \cite{BS2023}, the sphere \cite{FS2024}, the monomial curves \cite{BS2024} and the cone \cite{NOST2023}.

\textbf{In this article, we investigate the $L^p$-extremal theory for Fourier extension operators, where the associated surfaces (fractional surface) can have vanishing Gaussian curvature at some point}. This vanishing-curvature property prevents us from directly applying Tao's bilinear restriction estimate \cite{Tao2003}, which is a crucial tool for establishing the desired profile decomposition consequences. For the fractional surface, we obtain the following result.
\begin{thm}\label{n1}
	Let $d\geq 2$ and $\alpha> 2$. Then for all $p\in[1,2]$, the extremals for $M_{\alpha,p}$ exist; moreover, for all $p\in(1,2]$, the extremal sequences for $M_{\alpha,p}$ are precompact up to symmetries.
\end{thm}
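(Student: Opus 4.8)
The plan is to follow the Stovall-type profile decomposition strategy adapted to the fractional surface $\mathbb{P}^d_\alpha$. The argument splits naturally into two regimes according to the curvature behavior. Away from the origin, the surface $(\xi,|\xi|^\alpha)$ has nonvanishing Gaussian curvature and is smooth, so one has access to a local Tao-type bilinear restriction estimate on dyadic annuli $\{|\xi|\sim R\}$; near the origin the curvature degenerates, and one must instead exploit the approximate homogeneity / rescaling structure together with the $L^2$ endpoint estimate of Kenig-Ponce-Vega and interpolation with the trivial $L^1\to L^\infty$ bound.

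\textbf{Step 1 (Profile decomposition).} The first step is to establish a profile decomposition for $L^p$-bounded sequences: given $\{f_n\}$ with $\|f_n\|_{L^p}=1$, after passing to a subsequence one can write
\[
f_n = \sum_{j=1}^{J} \mathcal{G}(h_n^j,x_n^j,t_n^j)\phi^j + r_n^J,
\]
with the profiles $\phi^j\in L^p$, pairwise orthogonal parameters (meaning $h_n^j/h_n^k + h_n^k/h_n^j + |x_n^j-x_n^k|/\dots\to\infty$ for $j\neq k$ in the appropriate scaling-invariant sense), and the remainder satisfying $\limsup_{n\to\infty}\|\mathcal{E}_\alpha r_n^J\|_{L^q}\to 0$ as $J\to\infty$, together with the Pythagorean-type decoupling of both $\|f_n\|_{L^p}^p$ (asymptotically) and $\|\mathcal{E}_\alpha f_n\|_{L^q}^q$. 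The extraction of a single profile is the core: one needs the implication that $\|\mathcal{E}_\alpha f_n\|_{L^q}\gtrsim\delta>0$ while $\|f_n\|_{L^p}\leq 1$ forces $f_n$ (after applying symmetries) to have nontrivial weak limit. This in turn rests on an improved Strichartz-type inequality controlling $\|\mathcal{E}_\alpha f\|_{L^q}$ by $\|f\|_{L^p}$ times a weak/Besov-type norm of $f$ raised to a positive power — the standard ``bilinear-to-refined'' mechanism.

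\textbf{Step 2 (Bilinear estimate handling vanishing curvature).} The main obstacle is proving the bilinear restriction estimate $\|\mathcal{E}_\alpha f\, \mathcal{E}_\alpha g\|_{L^{q/2}}\lesssim \|f\|_{L^2}\|g\|_{L^2}$ for suitably separated pieces $f,g$, which is the ingredient that fails to follow directly from Tao's theorem because of the flat point at $\xi=0$. I would handle this by decomposing frequency space into dyadic annuli $A_k=\{|\xi|\sim 2^k\}$. On each annulus, rescaling $\xi\mapsto 2^k\xi$ maps $\mathbb{P}^d_\alpha\cap A_k$ to a fixed compact piece of the surface with curvature bounded below, on which Tao's bilinear estimate applies with constants uniform in $k$ (this uses $\alpha>2$ and $d\geq 2$ so that the rescaled pieces are genuinely elliptic). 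One then sums the annular contributions: the separation of profiles in the scaling parameter $h_n^j$ means that distinct profiles either live on well-separated annuli (where almost-orthogonality in $k$ closes the sum) or, after rescaling, are separated in frequency within a fixed annulus (where the honest bilinear estimate applies). The degenerate annulus near $\xi=0$ contributes a term that is small because $|A_k|\to 0$ and the $L^2\to L^{(2d+2\alpha)/d}$ bound of Kenig-Ponce-Vega, combined with $q>p$, gives a favorable power; here one crucially uses the strict inequality $\alpha>2$ rather than $\alpha=2$.

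\textbf{Step 3 (Concentration-compactness and conclusion).} With the profile decomposition in hand, apply it to an extremal sequence. The superadditivity-versus-subadditivity dichotomy, combined with the strict inequality $q>p$ in the necessary condition \eqref{E:Necessary condition}, forces all but one profile to vanish: if two or more profiles survived, the Pythagorean splittings would yield $M_{\alpha,p}^q \leq \sum_j \|\mathcal{E}_\alpha\phi^j\|_{L^q}^q \leq M_{\alpha,p}^q \sum_j \|\phi^j\|_{L^p}^q$ while $\sum_j\|\phi^j\|_{L^p}^p\leq 1$, and since $q>p$ one gets $\sum_j\|\phi^j\|_{L^p}^q<\sum_j\|\phi^j\|_{L^p}^p\leq 1$ unless exactly one profile is present with $\|\phi^1\|_{L^p}=1$ and the remainder vanishing strongly. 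That surviving profile, after undoing symmetries, is the extremal; moreover $\mathcal{G}_n f_n\to\phi^1$ in $L^p$, giving precompactness up to symmetries for $p\in(1,2]$ (the restriction $p>1$ enters because the argument needs $L^p$ to be uniformly convex / reflexive to upgrade weak convergence plus norm convergence to strong convergence, and one needs $q<\infty$, i.e.\ $p>1$, for the $L^q$ side). The endpoint $p=1$ requires a separate, simpler argument for mere existence of an extremal, exploiting that $L^1$ extremizers can be sought among measures and the extension operator of a point mass is explicit.
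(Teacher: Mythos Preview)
Your outline follows the Stovall strategy in spirit, but it has a structural gap in Step~1: you assert a full $L^p$ profile decomposition with dilation parameters $h_n^j$ and ``Pythagorean-type decoupling of $\|f_n\|_{L^p}^p$.'' For $p\neq 2$ there is no Hilbert-space mechanism giving $\|f_n\|_p^p=\|\phi\|_p^p+\|f_n-\phi\|_p^p+o(1)$ after extracting a weak limit, and no such decoupling with exponent $p$ is available in general. The best one can prove (via Stovall's projection lemma, the paper's Lemma~\ref{17}) is a quasi-orthogonality with exponent $p^*=\max(p,p')=p'$, and crucially that argument requires the profiles to be uniformly bounded and compactly supported in frequency so that the projection operators $\pi_n^j$ make sense and converge. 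Your proposal provides no substitute for this.

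The paper's route is designed precisely to create those conditions before attempting any profile decomposition. It first proves a \emph{frequency localization} step (Proposition~\ref{18}): every extremal sequence, after a single scaling $h_n$, satisfies $\|f_n\chi_{\{|\xi|>m\}\cup\{|f_n|>m\}}\|_{L^p}\to 0$ as $m\to\infty$. This is obtained through a chain of refined estimates (annular refinement Lemma~\ref{14}, chip refinement Lemma~\ref{21}, chip extraction Lemma~\ref{15}) followed by a contradiction argument ruling out bad scales and runaway frequency centers; the stationary-phase decay \eqref{eq1} and the strict inequality $\alpha>2$ enter here. Once localized, $f_n^m$ lies uniformly in $L^2$, so the $L^2$-based profile decomposition (Corollary~\ref{16}) applies with \emph{only} space-time translations---the dilations have already been fixed---and Lemma~\ref{17} then yields the $L^p$ quasi-orthogonality $\sum_j\|\phi^j\|_{L^p}^{p'}\leq 1$. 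Your Step~3 dichotomy survives because $q>p'$ (not merely $q>p$), but without the frequency-localization reduction the decomposition in your Step~1 cannot be closed.
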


\begin{rem}
    Recall that Kenig-Ponce-Vega's result \cite[Theorem 2.1]{KPV1991} guarantees the boundedness of $M_{\alpha,p}$ for $p\in[1,2]$. In addition, if one can confirm that $M_{\alpha,p}$ is finite for $p\in[1,p_0]$ with some $p_0>2$, then our method will directly give the existence of extremals for $p\in[1,p_0)$.
\end{rem}

To prove Theorem \ref{n1}, we follow the outline given by Stovall \cite{Stovall2020}. Firstly, one should establish some refined estimates to demonstrate that extremal sequences possess good frequency localization. Subsequently, the functions become bounded with compact support, which can be handled by employing the $L^2$-based profile decomposition. Here for establishing the $L^2$-based profile decomposition, one technical problem arises from the fact that the fractional surface's Gaussian curvature vanishes at the origin. Fortunately, Di-Yan \cite[Section 2]{DY2024} has addressed this problem: they divide the fractional surface into countable annuli, and then further divide each annuli into finite regions by dividing the angles. However, that paper \cite{DY2024} used a missing mass method rather than a profile decomposition argument. In this paper, to deal with the $L^p$ situation and follow Stovall's outline, we will establish the full profile decomposition. Here we should remark that the profile decomposition consequence also has many applications in PDE, see for instance \cite{BG1999,BV2007,CK2007,Keraani2001,Keraani2006,MV1998}.

This paper is organized as follows. In Section \ref{S:Preliminaries}, we list some preliminaries, which include the $L^2$-consequence for fractional surface and $L^p$-consequence used in investigating other surfaces. In Section \ref{ns1}, we establish the $L^2$-based profile decomposition and give some corollaries of this profile decomposition. In Section \ref{ns2}, we establish the frequency localization together with the associated $L^p$-based profile decomposition, and then prove our main result Theorem \ref{n1}.

We end this section with some notations. We use $X\lesssim Y$ to denote the estimate $|X|\leq CY$ for some constant $0<C<\infty$, which will not depend on the functions, and sometimes the dependence on $d$ will be shown as $X\lesssim_{d} Y$; then similarly for the notations $X \gtrsim Y$ and $X\sim Y$. The indicator function of a set $E$ will be denoted by $\chi_E$, and we further define $f_E:=\chi_Ef$.

\section{Preliminaries} \label{S:Preliminaries}
In this section, we recall some auxiliary consequences which will be used in our arguments. Some relevant $L^2$-based consequences are shown in Subsection \ref{SubS:L2 preliminaries}, while the $L^p$-based consequences are shown in Subsection \ref{SubS:Lp preliminaries}.

\subsection{The $L^2$ consequences} \label{SubS:L2 preliminaries}
The special $L^2$ case for fractional surface has been studied by the first and the third authors in \cite{DY2024}. Here we rewrite some of their results in the format applicable to this article.

First, we introduce a regular-type Fourier extension operator on the fractional surface and some basic dyadic terminologies. We recall the following operator
\[\widetilde{\mathcal{E}}_{\alpha}f(x,t) :=\int_{\mathbb{R}^d}|\xi|^{\frac{(\alpha-2)d}{2(d+2)}} e ^{ix\xi+it|\xi|^{\alpha}} f(\xi) \mathrm{d}\xi,\]
and the associated Fourier extension type estimate
\begin{equation}\label{neq17}
	\left\Vert\widetilde{\mathcal{E}}_{\alpha}f\right\Vert_{L^{\frac{2(d+2)}{d}}(\mathbb{R}^{d+1})}\leq \widetilde{M}_{\alpha}\Vert f\Vert_{L^2(\mathbb{R}^{d})},
\end{equation}
where 
$$\widetilde{M}_{\alpha}:=\sup_{\Vert f\Vert_{L^2(\mathbb{R}^d)}=1}\Vert\widetilde{\mathcal{E}}_{\alpha} f\Vert_{L^{\frac{2(d+2)}{d}}(\mathbb{R}^{d+1})}.$$
A dyadic interval is an interval of the form $[m2^{-n},(m+1)2^{-n}]$ with $m,n\in\mathbb{Z}$, and a dyadic cube is a product of dyadic intervals all having the same length; the set of all dyadic cubes of side length $2^{-k}$ is denoted by $\mathcal{D}_k$, and the set of all dyadic cubes is denoted by $\mathcal{D}$; for two dyadic cubes $\tau$ and $\tau'$, the notation $\tau\sim\tau'$ means that they have the same length $\ell(\tau) =\ell(\tau')$ but their distance is like $C_{d,\alpha} \ell(\tau)$ with the constant $C_{d,\alpha}$ sufficiently large\footnote{Further details can be seen in \cite[Definition 2.4]{DY2024}}.

The well-known bilinear restriction estimate \cite[Section 9, third remark]{Tao2003} is also used in \cite{DY2024}. However, the Gaussian curvature of the fractional surface vanishes at the origin. To resolve this difficulty, one needs to divide $\mathbb{R}^d$ into dyadic cube annuli as follows
\[\mathcal{A}_N:=\left\{\xi\in\mathbb{R}^d:2^N\leq \Vert\xi\Vert_{\max}\leq 2^{N+1}\right\}, \quad \Vert\xi\Vert_{\max}:=\max\{|\xi_1|,|\xi_2|,\cdots,|\xi_d|\}, \quad N\in\mathbb{Z}.\]
To utilize the Whitney decomposition, one needs to further divide the dyadic cube annuli into several parts based on the angles\footnote{Here the constant $K_{d,\alpha}$ is pretty large and further details can be seen in \cite[Page 7]{DY2024}}
\[\mathcal{A}_N=\bigcup_{j=1}^{K_{d,\alpha}}\mathcal{A}_N^j,\]
and this partitioning directly gives the following fact
\begin{equation}\label{eq17}
	\Vert\mathcal{E}_{\alpha}f_{\mathcal{A}_0}\Vert_{L^p(\mathbb{R}^{d+1})}\sim_{d,\alpha} \max_{1\leq j\leq K_{d,\alpha}} \Vert\mathcal{E}_{\alpha}f_{\mathcal{A}_0^j}\Vert_{L^p(\mathbb{R}^{d+1})}.
\end{equation}
Then the annular-restricted notation $\tau\sim\tau'\subset\mathcal{A}_N^j$ indicates 
\[\tau\sim\tau'\subset\mathcal{A}_N,\quad \tau\cap\mathcal{A}_N^j\ne\varnothing,\quad \tau'\cap\mathcal{A}_N^j\ne\varnothing.\]
With these notations in place, we have the following consequences. 
\begin{lem}[Lemma 2.5 of \cite{DY2024}]\label{12}
	We can divide $\mathcal{A}_0$ into $K_{d,\alpha}$ parts as shown above, such that for each $j\in\{1,2,\cdots,K_{d,\alpha}\}$ there holds
	\[\left\Vert \sum_{\tau\sim\tau'\subset\mathcal{A}_0^j}\mathcal{E}_{\alpha}f_{\tau}\mathcal{E}_{\alpha}f_{\tau'}\right\Vert_{L^p(\mathbb{R}^{d+1})}^{p_*}\lesssim\sum_{\tau\sim\tau'\subset\mathcal{A}_0^j}\left\Vert \mathcal{E}_{\alpha}f_{\tau}\mathcal{E}_{\alpha}f_{\tau'}\right\Vert_{L^p(\mathbb{R}^{d+1})}^{p_*}, \quad p_*:=\min(p,p')\]
	for all functions $f\in L^2(\bR^d)$ with their supports $\mathrm{supp}f\subset \mathcal{A}_0^j$.
\end{lem}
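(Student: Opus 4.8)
The plan is to deduce this $\ell^{p_*}$-orthogonality statement from Tao's bilinear restriction estimate via a Whitney decomposition and an almost-orthogonality argument, following the scheme of \cite{Tao2003} and its adaptation in \cite{DY2024}. The key point is that, because we have restricted attention to a single angular sector $\mathcal{A}_0^j$ of a single dyadic cube annulus, the piece of $\mathbb{P}^d_\alpha$ lying over $\mathcal{A}_0^j$ is a compact, smooth hypersurface with everywhere nonvanishing Gaussian curvature (the vanishing curvature of $\mathbb{P}^d_\alpha$ occurs only at the origin, which is far from $\mathcal{A}_0$), and with bounded geometry uniform in $j$. Hence Tao's bilinear estimate applies to pairs of well-separated caps inside $\mathcal{A}_0^j$.

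First I would recall that for dyadic cubes $\tau \sim \tau' \subset \mathcal{A}_0^j$ the bilinear extension estimate gives, for some $q_0 < \frac{d+3}{d+1}$ and hence in particular at the endpoint exponent relevant to $L^p$ here,
\[
\left\Vert \mathcal{E}_\alpha f_\tau\, \mathcal{E}_\alpha f_{\tau'}\right\Vert_{L^p(\mathbb{R}^{d+1})} \lesssim \ell(\tau)^{\beta}\Vert f_\tau\Vert_{L^2}\Vert f_{\tau'}\Vert_{L^2}
\]
for the appropriate power $\beta$, uniformly over such separated pairs and uniformly in $j$ (this is exactly the input isolated in \cite{DY2024}). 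Next, the Whitney decomposition of $\mathcal{A}_0^j \times \mathcal{A}_0^j$ away from its diagonal writes the product $(\mathcal{E}_\alpha f_{\mathcal{A}_0^j})^2$ — or rather the bilinear form — as $\sum_{\tau \sim \tau'} \mathcal{E}_\alpha f_\tau \mathcal{E}_\alpha f_{\tau'}$, with each scale $2^{-k}$ contributing $O(1)$ pairs per fixed $\tau$. The core of the argument is then to upgrade the triangle inequality $\Vert \sum \Vert \le \sum \Vert \Vert$ to the $\ell^{p_*}$ bound in the statement, by exploiting that the Fourier supports of the products $\mathcal{E}_\alpha f_\tau \mathcal{E}_\alpha f_{\tau'}$ are almost disjoint: two such products at scale $\ell(\tau)$ have frequency support in boxes that are finitely overlapping, and products at different dyadic scales have essentially separated frequency supports. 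By Plancherel this gives genuine $\ell^2$-orthogonality when $p=2$, and by interpolating the $\ell^2$-orthogonality at $p=2$ against the trivial $\ell^1$ triangle inequality / $\ell^\infty$ bound one obtains $\ell^{p_*}$-orthogonality for the intermediate exponents $p \in (1,\infty)$; this is the standard mechanism (see \cite[Section 9]{Tao2003}) for passing from an $L^2$ bilinear inequality to the tensored $L^p$ statement.

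The step I expect to be the main obstacle is making the almost-orthogonality of the frequency supports precise and scale-uniform: one must check that for $\tau \sim \tau' \subset \mathcal{A}_0^j$ the set $\{(\xi+\eta,\,|\xi|^\alpha+|\eta|^\alpha): \xi \in \tau,\ \eta \in \tau'\}$ is contained in a rectangular box of controlled eccentricity whose dimensions are governed by the curvature of $\mathbb{P}^d_\alpha$ over $\mathcal{A}_0^j$, and that the constant $C_{d,\alpha}$ in the relation $\tau \sim \tau'$ and the number of sectors $K_{d,\alpha}$ have been chosen (as they are in \cite{DY2024}) large enough that these boxes have bounded overlap both within a scale and across scales. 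Once this geometric bookkeeping is in place — and it is precisely the content of the angular subdivision $\mathcal{A}_0 = \bigcup_j \mathcal{A}_0^j$ recalled above — the interpolation argument is routine, and the uniformity of all constants in $j$ follows since the sectors $\mathcal{A}_0^j$ are mutually congruent up to rotations and the surface $\mathbb{P}^d_\alpha$ restricted to $\mathcal{A}_0$ has uniformly bounded geometry. I would therefore present the proof as: (i) cite the uniform bilinear estimate on a single sector; (ii) run the Whitney decomposition; (iii) establish finite overlap of frequency supports; (iv) interpolate the $p=2$ Plancherel orthogonality with the $\ell^1$ triangle bound to conclude.
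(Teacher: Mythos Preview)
The paper does not give its own proof of this lemma; it is quoted verbatim from \cite[Lemma~2.5]{DY2024} as a preliminary. Your outline --- frequency-support separation of the bilinear products, Plancherel orthogonality at $p=2$, then interpolation against the triangle inequality to reach $\ell^{p_*}$ --- is the standard mechanism and is indeed what is carried out in \cite{DY2024} (tracing back to \cite{Tao2003} and Tao--Vargas--Vega). In that sense the proposal is correct and matches the source.

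One point to clean up: your step (i), the bilinear restriction estimate, is \emph{not} an input to this lemma. That estimate is the separate Lemma~\ref{11} and is only used later, in Lemma~\ref{21}, to bound the individual terms $\Vert \mathcal{E}_\alpha f_\tau\,\mathcal{E}_\alpha f_{\tau'}\Vert_{L^{q/2}}$ after the present orthogonality has been applied. Lemma~\ref{12} is a pure almost-orthogonality statement about Fourier supports and needs only your steps (iii) and (iv): the space--time Fourier transform of $\mathcal{E}_\alpha f_\tau\cdot\mathcal{E}_\alpha f_{\tau'}$ is supported in the sumset $\{(\xi+\eta,\,|\xi|^\alpha+|\eta|^\alpha):\xi\in\tau,\ \eta\in\tau'\}$, and the role of the angular partition into $\mathcal{A}_0^j$ together with the constant $C_{d,\alpha}$ in the relation $\tau\sim\tau'$ is exactly to make these sumsets finitely overlapping across all pairs and scales. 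Likewise, the Whitney decomposition in your step (ii) is already built into the summation $\sum_{\tau\sim\tau'}$ in the statement; there is nothing further to run. So the proof reduces to (iii) and (iv), which you have identified correctly.
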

\begin{lem}[Tao's bilinear restriction estimate, see also Theorem 2.7 of \cite{DY2024}]\label{11} 
	Tao's bilinear restriction estimate to the truncated surface implies the following fact: for $p>\frac{d+3}{d+1}$ and $\tau\sim\tau'\subset\mathcal{A}_0$, there holds
	\[\Vert \mathcal{E}_{\alpha}f_{\tau}\mathcal{E}_{\alpha}f_{\tau'}\Vert_{L^p(\mathbb{R}^{d+1})}\lesssim|\tau|^{1-\frac{d+2}{dp}}\Vert f_{\tau}\Vert_{L^2(\mathbb{R}^d)}\Vert f_{\tau'}\Vert_{L^2(\mathbb{R}^d)}, \quad \forall f\in L^2(\mathbb{R}^d).\]
\end{lem}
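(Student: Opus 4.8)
The statement is that Tao's bilinear restriction estimate for the paraboloid (or a compact, curved piece of a surface) transfers to pairs $\tau\sim\tau'\subset\mathcal A_0$ on the fractional surface, with the scaling-correct factor $|\tau|^{1-\frac{d+2}{dp}}$. The plan is to reduce the fractional surface locally to a perturbation of the paraboloid and then invoke Tao's theorem directly. First I would fix two separated dyadic cubes $\tau,\tau'$ of common side length $\ell=\ell(\tau)$ inside $\mathcal A_0=\{1\le\|\xi\|_{\max}\le 2\}$, so that $|\xi|\sim 1$ on their union and the phase $|\xi|^\alpha$ is smooth with non-degenerate Hessian there (this is where $\mathcal A_0$ avoiding the origin is used). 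Next I would apply an affine change of variables $\xi=c+\ell\eta$ centered at $\tau$ (and likewise for $\tau'$), together with the corresponding linear modulation/shear in the $(x,t)$ variables, which maps $\mathcal E_\alpha f_\tau$ to a constant multiple of an extension operator over a unit cube for the rescaled phase $\ell^{-\alpha}(|c+\ell\eta|^\alpha - \text{affine part})$. A Taylor expansion shows this rescaled phase converges, in $C^k$ on the unit cube, to $\tfrac12\eta^\top H\eta$ for the fixed positive-definite Hessian $H$ of $|\xi|^\alpha$ at $c$, with errors $O(\ell)$ uniformly over all admissible centers $c\in\mathcal A_0$ (because $|\xi|^\alpha$ is smooth with bounded derivatives away from $0$ on the compact annulus).

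Then I would invoke Tao's bilinear restriction theorem in the robust form that applies to any two transversal, curved hypersurface caps whose defining phases are $C^2$-close (uniformly) to a fixed elliptic model: for $p>\frac{d+3}{d+1}$ one gets
\[
\Vert \mathcal E_{\alpha}f_{\tau}\,\mathcal E_{\alpha}f_{\tau'}\Vert_{L^p(\bR^{d+1})}\lesssim \Vert f_\tau\Vert_{L^2}\Vert f_{\tau'}\Vert_{L^2}
\]
at unit scale, with an implicit constant uniform in the caps thanks to the uniform $C^2$-closeness and uniform transversality guaranteed by $\tau\sim\tau'$. Undoing the rescaling — tracking how $\mathrm d\xi$, the $L^2$ norms, and the $L^p(\bR^{d+1})$ norm scale under $\xi=c+\ell\eta$ — produces exactly the factor $|\tau|^{1-\frac{d+2}{dp}}=\ell^{d-\frac{d+2}{p}}$; this bookkeeping is the routine part and I would present it schematically. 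Since this lemma is quoted verbatim from \cite{DY2024} (Theorem 2.7 there), in the paper itself it suffices to cite that reference; the sketch above is how one would reconstruct the argument.

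The main obstacle is ensuring uniformity: one must check that the transversality constant between the caps over $\tau$ and $\tau'$ (controlled from below by the separation condition built into $\tau\sim\tau'$ with $C_{d,\alpha}$ large) and the $C^2$-distance of the rescaled phases to the elliptic model are both uniform over all dyadic scales $\ell\le 1$ and all positions within $\mathcal A_0$, so that the single constant in Tao's estimate applies throughout. The reason this works is precisely that $\mathcal A_0$ is a compact set bounded away from the curvature-vanishing point $\xi=0$, so all relevant derivatives of $|\xi|^\alpha$ are bounded and the Hessian is uniformly positive-definite there; the vanishing-curvature difficulty has already been quarantined by the earlier reduction to annuli $\mathcal A_N$ and the further angular subdivision recorded in \eqref{eq17}.
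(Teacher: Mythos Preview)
The paper does not prove this lemma; it is stated as a direct citation of \cite[Theorem 2.7]{DY2024} (and ultimately of Tao's bilinear theorem), with no argument given beyond the attribution. Your sketch---affine rescaling of each cap to unit scale, observing that on $\mathcal{A}_0$ the phase $|\xi|^{\alpha}$ has uniformly bounded derivatives and uniformly nondegenerate Hessian, invoking Tao's theorem in its stable form for transversal elliptic caps, and then undoing the rescaling to recover the factor $|\tau|^{1-\frac{d+2}{dp}}$---is the standard derivation and matches what is carried out in the cited reference. The scaling bookkeeping you indicate is correct: with $\ell=\ell(\tau)$ one picks up $\ell^{2d}$ from the two Jacobians, $\ell^{-(d+2)/p}$ from the change of variables $(x',t')=(\ell x,\ell^2 t)$ in the $L^p$ norm, and $\ell^{-d}$ from rewriting the two $L^2$ norms, giving $\ell^{d-(d+2)/p}=|\tau|^{1-\frac{d+2}{dp}}$. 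Your remark about uniformity is also on point: this is exactly why the reduction to the compact annulus $\mathcal{A}_0$ (away from the origin) and the separation constant $C_{d,\alpha}$ in the definition of $\tau\sim\tau'$ are set up beforehand.
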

\begin{lem}[Proposition 2.2 of \cite{DY2024}]\label{1}
	Let $d\geq 2$ and $\alpha> 2$. There exists $\theta\in(0,1)$ such that the following refined Strichartz estimate holds
	\[\left\Vert \widetilde{\mathcal{E}}_{\alpha} f\right\Vert_{L^{2+\frac{4}{d}}(\mathbb{R}^{d+1})}\lesssim \left(\sup\limits_{\tau\in\mathcal{D}}|\tau|^{-\frac{1}{2}}\left\Vert\mathcal{E}_{\alpha}f_{\tau}\right\Vert_{L^{\infty}(\mathbb{R}^{d+1})}\right)^{\theta}\Vert f\Vert_{L^2(\mathbb{R}^d)}^{1-\theta}, \quad \forall f\in L^2(\mathbb{R}^d).\]
\end{lem}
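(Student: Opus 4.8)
The plan is to run the Tao--Vargas--Vega / Moyua--Vargas--Vega scheme that deduces a refined Strichartz estimate from a bilinear restriction estimate; this is exactly the context in which Lemmas \ref{12} and \ref{11} have been prepared. Throughout write $\beta:=\frac{(\alpha-2)d}{2(d+2)}$ for the exponent in the weight and $q:=2+\frac4d$, so $q/2=\frac{d+2}{d}>\frac{d+3}{d+1}$.

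First I would reduce to a single dyadic annulus. On $\mathcal{A}_N$, setting $\xi=2^N\eta$, the weight $|\xi|^\beta$ is comparable to the constant $2^{N\beta}$, so $\widetilde{\mathcal{E}}_\alpha f_{\mathcal{A}_N}$ equals, up to bounded factors and the rescaling $(x,t)\mapsto(2^Nx,2^{N\alpha}t)$, a constant multiple of $\widetilde{\mathcal{E}}_\alpha g_N$ with $g_N:=f(2^N\cdot)\chi_{\mathcal{A}_0}$. Bookkeeping shows that the prefactor $2^{N(\beta+d)}$ together with the Jacobian of the $(x,t)$-rescaling contributes exactly $2^{Nd/2}$ to $\|\widetilde{\mathcal{E}}_\alpha f_{\mathcal{A}_N}\|_{L^q}$, which is precisely the power of $2^N$ produced also by $\|g_N\|_{L^2}$ and by the rescaled quantity $\sup_\tau|\tau|^{-1/2}\|\mathcal{E}_\alpha f_\tau\|_\infty$: the weight $|\xi|^{(\alpha-2)d/(2(d+2))}$ has been engineered so that a refined estimate valid on $\mathcal{A}_0$ transfers to every $\mathcal{A}_N$ with the same constant. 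Since the $\widetilde{\mathcal{E}}_\alpha f_{\mathcal{A}_N}$ have $t$-frequencies in essentially disjoint dyadic blocks $[2^{N\alpha},C_{d,\alpha}2^{(N+1)\alpha}]$, a Littlewood--Paley argument in $t$ reassembles the annular pieces at the end. It thus remains to treat $g$ supported in $\mathcal{A}_0$, where $(\eta,|\eta|^\alpha)$ has Gaussian curvature bounded above and below; splitting $\mathcal{A}_0$ into the finitely many angular pieces $\mathcal{A}_0^j$ and using \eqref{eq17}, I may assume $\supp g\subset\mathcal{A}_0^j$, a patch over which the surface is a graph with nondegenerate, comparable curvature and on which Whitney's decomposition behaves well.

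On such a patch I would Whitney-decompose the diagonal, $|\mathcal{E}_\alpha g|^2=\sum_{\tau\sim\tau'}\mathcal{E}_\alpha g_\tau\overline{\mathcal{E}_\alpha g_{\tau'}}$, where only pairs of side $\lesssim1$ occur because $\supp g$ is bounded. Pairing with $|\mathcal{E}_\alpha g|^{q-2}$ and using Hölder gives $\|\mathcal{E}_\alpha g\|_{L^q}^2\lesssim\sum_{\tau\sim\tau'}\|\mathcal{E}_\alpha g_\tau\overline{\mathcal{E}_\alpha g_{\tau'}}\|_{L^{q/2}}$; organizing this sum by the common side $2^{-k}$ and combining the orthogonality of Lemma \ref{12} (with $p_*=\min(q/2,(q/2)')$) with Littlewood--Paley in the spatial frequency (the scale-$k$ block has $x$-frequency of size $\sim2^{-k}$) reduces matters to controlling a sum of $\|\mathcal{E}_\alpha g_\tau\overline{\mathcal{E}_\alpha g_{\tau'}}\|_{L^{q/2}}^{p_*}$. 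For each pair I would interpolate Tao's bilinear estimate (Lemma \ref{11}), applied at a Lebesgue exponent strictly between $\frac{d+3}{d+1}$ and $\frac{d+2}{d}$ where it genuinely gains a power of $|\tau|$, against the trivial bound $\|\mathcal{E}_\alpha g_\tau\|_\infty\le|\tau|^{1/2}\sup_\sigma|\sigma|^{-1/2}\|\mathcal{E}_\alpha g_\sigma\|_\infty$. The surplus room $\frac{d+2}{d}-\frac{d+3}{d+1}>0$ is what permits the interpolation, produces the factor $\big(\sup_\sigma|\sigma|^{-1/2}\|\mathcal{E}_\alpha g_\sigma\|_\infty\big)^{\theta}$ with an explicit $\theta\in(0,1)$, and leaves a net power of $|\tau|$ so that the geometric series over $k$ converges (here the bounded support of $g$ is essential, since no arbitrarily large cubes appear), the remaining factors summing to $\|g\|_{L^2}^{1-\theta}$ via the $\ell^2$-summability of $\{\|g_\tau\|_{L^2}\}$ over dyadic cubes of a fixed side.

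The hard part will be the summations. At the endpoint exponent $2+\frac4d$ the bilinear estimate is scale invariant, so the convergence of the cube-side sum must be extracted entirely from the surplus room above the bilinear threshold and from the prior localization of the frequency support; this is delicate and is the reason one cannot apply Lemma \ref{11} at $q/2$ directly. Moreover, because the fractional surface has infinitely many relevant annuli accumulating at the origin, the annular reassembly must not lose anything: the Littlewood--Paley step in $t$ has to be combined with the exact scale invariance built into the weight $|\xi|^{(\alpha-2)d/(2(d+2))}$, which is precisely the ingredient the parabolic argument does not need and which \cite[Section 2]{DY2024} supplies.
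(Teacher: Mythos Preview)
The paper does not give its own proof of this lemma; it is quoted verbatim as Proposition~2.2 of \cite{DY2024}, with Lemmas~\ref{12} and~\ref{11} recorded here precisely as the preparatory ingredients of that argument. Your outline---scale-invariant reduction to a single annulus via the weight $|\xi|^{(\alpha-2)d/(2(d+2))}$, the finite angular splitting \eqref{eq17}, Whitney decomposition on each patch, the almost-orthogonality of Lemma~\ref{12} combined with Tao's bilinear estimate Lemma~\ref{11}, and Littlewood--Paley reassembly in $t$ across the annuli---is exactly the scheme carried out in \cite[Section~2]{DY2024}, so your proposal matches the cited approach.

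One point deserves correction. You claim the interpolation ``leaves a net power of $|\tau|$ so that the geometric series over $k$ converges,'' but this is not what happens: interpolating Lemma~\ref{11} at any exponent $r\in(\tfrac{d+3}{d+1},\tfrac{d+2}{d})$ against the $L^\infty$ bound $\|\mathcal{E}_\alpha g_\tau\mathcal{E}_\alpha g_{\tau'}\|_\infty\le|\tau|S^2$ to reach $L^{q/2}$ yields a $|\tau|$-exponent equal to $\lambda\bigl(1-\tfrac{d+2}{dr}\bigr)+(1-\lambda)=0$ with $\lambda=\tfrac{rd}{d+2}$; the scale invariance of the target inequality forces this. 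Thus the resulting sum $\sum_{\tau\sim\tau'}(\|g_\tau\|_2\|g_{\tau'}\|_2)^{r}$ is \emph{not} controlled by $\|g\|_2^{2r}$ uniformly (it can grow logarithmically for concentrated $g$), and the passage to the $\sup$-form on the right requires an additional combinatorial step---an $X^{s,b}$/Besov-type intermediate bound in the spirit of B\'egout--Vargas, analogous to how Lemma~\ref{L:Xsb to chip refinement} is used in the proof of Lemma~\ref{21} here. You correctly flag this as ``the hard part'' and defer to \cite{DY2024} for it; just be aware that the mechanism is not a leftover geometric factor in $|\tau|$.
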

\begin{lem}[Lemma 3.2 of \cite{DY2024}]\label{2}
	For a bounded sequence of functions ${f_n}$ in $L^2(\mathbb{R}^d)$, which satisfies $f_n\rightharpoonup0$ weakly in $L^2(\mathbb{R}^d)$ as $n\to\infty$, then up to a subsequence we have the following pointwise estimates
	\[\lim\limits_{n\to\infty}\mathcal{E}_{\alpha}f_n(x,t)=0,\quad \lim\limits_{n\to\infty}\widetilde{\mathcal{E}}_{\alpha}f_n(x,t)=0, \quad\quad \text{a.e.} \;\; (x,t)\in \mathbb{R}^{d+1}.\]
\end{lem}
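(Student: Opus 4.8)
The plan is to upgrade the (elementary) \emph{distributional} convergence $\mathcal{E}_\alpha f_n\to 0$ and $\widetilde{\mathcal{E}}_\alpha f_n\to 0$ to convergence in $L^2_{\mathrm{loc}}(\mathbb{R}^{d+1})$; convergence a.e.\ along a subsequence then follows by exhausting $\mathbb{R}^{d+1}$ with balls and diagonalizing. The distributional part is soft: for $\psi\in C_c^\infty(\mathbb{R}^{d+1})$ one has $\int\mathcal{E}_\alpha f_n\,\overline{\psi}=\langle f_n,\Phi_\psi\rangle$ with $\Phi_\psi(\xi)=\overline{\widehat{\psi}(\xi,|\xi|^\alpha)}$, and since $\widehat{\psi}$ is Schwartz, $\Phi_\psi$ decays faster than any power of $|\xi|$ and lies in $L^2(\mathbb{R}^d)$, so $\langle f_n,\Phi_\psi\rangle\to 0$ by $f_n\rightharpoonup 0$; the same computation with the extra factor $|\xi|^{\gamma}$, $\gamma:=\tfrac{(\alpha-2)d}{2(d+2)}\ge 0$, covers $\widetilde{\mathcal{E}}_\alpha$.

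The substance is the \emph{precompactness} of $\{\mathcal{E}_\alpha f_n\}$ in $L^2_{\mathrm{loc}}(\mathbb{R}^{d+1})$, which cannot come from weak nullity alone: the mass of $f_n$ may escape to $|\xi|=\infty$, so that $\mathcal{E}_\alpha f_n$ oscillates arbitrarily fast and no equicontinuity holds (naive criteria such as dominated convergence after frequency truncation, or Fr\'echet--Kolmogorov, fail). The dispersive input I will use is the \emph{local smoothing} estimate for the fractional \schrodinger propagator: for $\alpha>1$ there is a gain of $s:=\tfrac{\alpha-1}{2}>0$ spatial derivatives, which in the localized form
\[
\bigl\|\chi(x)\,\eta(t)\,\mathcal{E}_\alpha g\bigr\|_{L^2\!\left(\mathbb{R}^{d+1};\,H^{s}(\mathbb{R}^d)\right)}\lesssim_{\chi,\eta}\|g\|_{L^2(\mathbb{R}^d)},\qquad \chi\in C_c^\infty(\mathbb{R}^d),\ \ \eta\in C_c^\infty(\mathbb{R}),
\]
is classical (see \cite{KPV1991}). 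Thus $u_n:=\chi\eta\,\mathcal{E}_\alpha f_n$ is bounded in $L^2_tH^s_x$ and supported in a fixed compact set; moreover $\partial_t\mathcal{E}_\alpha f_n=i\,\mathcal{E}_\alpha(|\xi|^{\alpha}f_n)$ as a tempered distribution, and the elementary bound $\bigl\||\xi|^{\alpha}\widehat{\varphi}(\xi,|\xi|^\alpha)\bigr\|_{L^2_\xi}\lesssim\bigl\||D_x|^{\alpha}\varphi\bigr\|_{L^2_{t,x}}$ for compactly supported $\varphi$ shows $\{\partial_t u_n\}$ is bounded in $L^2_tH^{-\alpha}_x$. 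Since the supports lie in a fixed ball, the embeddings $H^{s}\hookrightarrow\hookrightarrow L^2\hookrightarrow H^{-\alpha}$ let the Aubin--Lions--Simon lemma give precompactness of $\{u_n\}$ in $L^2(\mathbb{R}^{d+1})$; combined with the distributional convergence this forces $u_n\to 0$ in $L^2(\mathbb{R}^{d+1})$, and letting $\chi,\eta$ vary we obtain $\mathcal{E}_\alpha f_n\to 0$ in $L^2_{\mathrm{loc}}$, hence a.e.\ up to a subsequence. (Equivalently, the Fourier support of $\mathcal{E}_\alpha f_n$ on $\{\tau=|\xi|^\alpha\}$ upgrades the spatial $H^s$-bound to a genuine $H^{s/\alpha}_{t,x}$-bound and one applies Rellich directly.)

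For $\widetilde{\mathcal{E}}_\alpha$ I will repeat this after absorbing the weight. Splitting $f_n=f_n\chi_{\{|\xi|\le1\}}+f_n\chi_{\{|\xi|>1\}}$, the first piece gives $\mathcal{E}_\alpha\bigl(|\xi|^{\gamma}\chi_{\{|\xi|\le1\}}f_n\bigr)$, which is handled by the $\mathcal{E}_\alpha$ case since $|\xi|^{\gamma}\chi_{\{|\xi|\le1\}}f_n$ is again bounded in $L^2$ and weakly null; for the second piece, $\widetilde{\mathcal{E}}_\alpha(f_n\chi_{\{|\xi|>1\}})$ is the fractional \schrodinger evolution of data of $\dot H^{-\gamma}$-norm $\sim\|f_n\|_{L^2}$, so local smoothing now supplies a \emph{net} gain of $s':=\tfrac{\alpha-1}{2}-\gamma=\tfrac{2\alpha+d-2}{2(d+2)}>0$ derivatives, and the Aubin--Lions argument runs unchanged (with $H^{-\alpha-\gamma}$ replacing $H^{-\alpha}$).

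I expect the main difficulty to be exactly this point: locating the dispersive ingredient and checking that it survives the vanishing curvature of $\mathbb{P}^d_{\alpha}$ at the origin. All soft compactness criteria collapse under high-frequency escape, so local smoothing is indispensable; but local smoothing is driven by the high-frequency regime, where $\mathbb{P}^d_{\alpha}$ is non-degenerate, and for $\widetilde{\mathcal{E}}_\alpha$ the weight exponent $\gamma$ always stays strictly below the smoothing gain $\tfrac{\alpha-1}{2}$, so the degenerate point causes no loss.
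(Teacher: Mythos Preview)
The paper does not prove this lemma; it is quoted verbatim from \cite{DY2024} as a preliminary. Your strategy---soft distributional convergence plus precompactness in $L^2_{\mathrm{loc}}$ via local smoothing and a Rellich/Aubin--Lions argument---is correct and is the standard route for such statements (and, given that the companion Lemma~\ref{5} from the same source is phrased as ``$T_\alpha(\xi_n)f_n\to 0$ strongly in $L^2_{\mathrm{loc}}$'', almost certainly the route taken in \cite{DY2024} as well). The identification of local smoothing as the indispensable dispersive input is exactly right: no soft criterion survives high-frequency escape, and the gain $(\alpha-1)/2$ from \cite{KPV1991} is what kills the tail. Your computation $s'=\tfrac{\alpha-1}{2}-\gamma=\tfrac{2\alpha+d-2}{2(d+2)}>0$ for $\widetilde{\mathcal{E}}_\alpha$ is correct.

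One technical point deserves care. What local smoothing gives directly is $\|\chi(x)\,|D_x|^{s}\mathcal{E}_\alpha g\|_{L^2_{t,x}}\lesssim\|g\|_{L^2}$, not the bound $\|\chi\eta\,\mathcal{E}_\alpha g\|_{L^2_tH^s_x}\lesssim\|g\|_{L^2}$ you wrote (the norm ``$L^2(\mathbb{R}^{d+1};H^s(\mathbb{R}^d))$'' is also mis-typed). Passing between the two costs a commutator $[|D_x|^{s},\chi]$, which is harmless when $s\le 1$ (order $s-1\le 0$, hence $L^2$-bounded) but requires a priori regularity of $\mathcal{E}_\alpha f_n$ when $\alpha>3$. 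The clean fix is either to run Aubin--Lions with any fixed exponent $0<s'<\min\{1,(\alpha-1)/2\}$, or---more transparently---to bypass Aubin--Lions altogether: split $f_n=f_n\chi_{\{|\xi|\le R\}}+f_n\chi_{\{|\xi|>R\}}$, note that $g\mapsto \chi\eta\,\mathcal{E}_\alpha(g\chi_{\{|\xi|\le R\}})$ is Hilbert--Schmidt (so the low piece converges to $0$ in $L^2_{\mathrm{loc}}$ by weak nullity), and use local smoothing in the form $\|\chi\,\mathcal{E}_\alpha h\|_{L^2_{t,x}}\lesssim\||\xi|^{-(\alpha-1)/2}h\|_{L^2}$ to make the high piece $\lesssim R^{-(\alpha-1)/2}\sup_n\|f_n\|_{L^2}$, uniformly small. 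This avoids commutators and the somewhat opaque sentence about $\||\xi|^\alpha\widehat\varphi(\xi,|\xi|^\alpha)\|_{L^2_\xi}$.
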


To illustrate the effect of frequency translation parameters and their asymptotic behavior, we recall some notations in \cite[Section 4]{DY2024}. For $f\in L^2(\mathbb{R}^d)$ and $\xi_0\in\mathbb{R}^d$, we introduce the operators
\[\displaystyle T_{\alpha}(\xi_0)f(x,t):=\int_{\mathbb{R}^d}|\xi+\xi_0|^{\frac{\alpha-2}{2+\frac{4}{d}}} e ^{ i (x,t)\cdot(\xi+\xi_0,|\xi+\xi_0|^{\alpha})}f(\xi)\mathrm{d}\xi\]
and
\[\bar{T}_{\alpha}(\xi_0)f(x,t):=\int_{\mathbb{R}^d}\left|\dfrac{\xi}{|\xi_0|}+\dfrac{\xi_0}{|\xi_0|}\right|^{\frac{\alpha-2}{2+\frac{4}{d}}} e ^{ i (x,t)\left(\xi,\frac{|\xi+\xi_0|^{\alpha}-|\xi_0|^{\alpha}-\alpha|\xi_0|^{\alpha-2}\xi_0\cdot\xi}{|\xi_0|^{\alpha-2}}\right)}f(\xi)\mathrm{d}\xi.\]
Then one can check
\begin{equation}\label{neq18}
	\left\Vert \widetilde{\mathcal{E}}_{\alpha}\left(f(\cdot-\xi_0)\right) \right\Vert_{L^{2+\frac{4}{d}}(\mathbb{R}^{d+1})}=\left\Vert T_{\alpha}(\xi_0)f\right\Vert_{L^{2+\frac{4}{d}}(\mathbb{R}^{d+1})}=\left\Vert \bar{T}_{\alpha}(\xi_0)f\right\Vert_{L^{2+\frac{4}{d}}(\mathbb{R}^{d+1})}.
\end{equation}
For an unit vector $\xi_0\in\mathbb{S}^{d-1}$, we define the linear transformation $A_0$ on 
$\mathbb{R}^d$ as follows \[A_0: \xi\mapsto \sqrt{\dfrac{\alpha}{2}} \xi^{\bot} + \sqrt{\dfrac{\alpha(\alpha-1)}{2}} \xi^{\parallel}, \quad \xi^{\parallel} := (\xi\cdot\xi_0)\xi_0, \quad \xi^{\bot}:=\xi-\xi^{\parallel};\]
meanwhile, define the associated unitary operator $\widetilde{A_0}$ on $L^2(\mathbb{R}^d)$ by $\widetilde{A_0}f(x):=|\det A_0|^{1/2}f(A_0x)$.
\begin{lem}[Lemma 4.1 of \cite{DY2024}]\label{4}
	Let $f\in L^2(\mathbb{R}^d)$ with $\Vert f\Vert_{L^2(\mathbb{R}^d)}=1$ and $\{\xi_n\}\subset\mathbb{R}^d$ with $|\xi_n|\to\infty$ as $n\to\infty$. Set the notations
	\[\xi_0 :=\lim\limits_{n\to\infty} {\xi_n}/{|\xi_n|}, \quad f_n :=f(\xi-\xi_n), \quad {\mathbf{\mathrm{a}}}^*_{d,\alpha}:= (\alpha-1)^{-\frac{1}{2d+4}} \left({\alpha}/{2}\right)^{-\frac{d}{2d+4}}.\]
	Then, up to subsequences, we have the following asymptotic Schrödinger behavior
	$$\lim\limits_{n\to\infty}\left\Vert \widetilde{\mathcal{E}}_{\alpha} f_n\right\Vert_{L^{2+\frac{4}{d}}(\mathbb{R}^{d+1})}=\lim\limits_{n\to\infty}\left\Vert T_{\alpha}(\xi_n)f\right\Vert_{L^{2+\frac{4}{d}}(\mathbb{R}^{d+1})}={\mathbf{\mathrm{a}}}^*_{d,\alpha}\left\Vert\mathcal{E}_2\widetilde{A_0}^{-1} f\right\Vert_{L^{2+\frac{4}{d}}(\mathbb{R}^{d+1})}.$$
\end{lem}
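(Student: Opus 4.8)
The plan rests on the two identities in \eqref{neq18} together with a ``zoom-in'' limit, in which the fractional surface translated far out in frequency becomes, after the rescaling built into $\bar{T}_{\alpha}$, a linearly distorted paraboloid; write $q:=2+\tfrac{4}{d}$ throughout (the normalisation $\Vert f\Vert_{L^2}=1$ plays no role). \emph{Reductions.} The first equality needs no limit: the first identity of \eqref{neq18} with $\xi_0=\xi_n$ gives $\Vert\widetilde{\mathcal{E}}_{\alpha}f_n\Vert_{L^{q}(\bR^{d+1})}=\Vert T_{\alpha}(\xi_n)f\Vert_{L^{q}(\bR^{d+1})}$ for each $n$. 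For the second equality, the second identity of \eqref{neq18} reduces us to showing
\[\lim_{n\to\infty}\big\Vert\bar{T}_{\alpha}(\xi_n)f\big\Vert_{L^{q}(\bR^{d+1})}={\mathbf{\mathrm{a}}}^*_{d,\alpha}\big\Vert\mathcal{E}_2\widetilde{A_0}^{-1}f\big\Vert_{L^{q}(\bR^{d+1})}.\]
By \eqref{neq18} and \eqref{neq17}, $\Vert\bar{T}_{\alpha}(\xi_n)f\Vert_{L^q}=\Vert\widetilde{\mathcal{E}}_{\alpha}(f(\cdot-\xi_n))\Vert_{L^q}\le\widetilde{M}_{\alpha}\Vert f\Vert_{L^2}$ uniformly in $n$, and $\mathcal{E}_2\colon L^2\to L^q$ is bounded by the classical Strichartz estimate; so an $\varepsilon/3$ argument together with the density of $C_c^{\infty}(\bR^d)$ in $L^2(\bR^d)$ reduces the displayed limit to the case $f\in C_c^{\infty}(\bR^d)$, fixed from now on with $\supp f\subset\{|\xi|<R\}$. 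For such $f$,
\[\bar{T}_{\alpha}(\xi_n)f(x,t)=\int_{\bR^d}w_n(\xi)\,e^{i(x\cdot\xi+t\psi_n(\xi))}f(\xi)\,\ddd\xi,\]
with $w_n(\xi)=\big(|\xi+\xi_n|/|\xi_n|\big)^{(\alpha-2)/q}$ and $\psi_n(\xi)=|\xi_n|^{-(\alpha-2)}\big(|\xi+\xi_n|^{\alpha}-|\xi_n|^{\alpha}-\alpha|\xi_n|^{\alpha-2}\,\xi_n\cdot\xi\big)$.

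\emph{The limiting operator.} Pass to the subsequence along which $\xi_n/|\xi_n|\to\xi_0$. Since $|\xi+\xi_n|/|\xi_n|\to1$, one has $w_n\to1$ in $C^{\infty}(\{|\xi|\le R\})$. For $\psi_n$, Taylor's formula with integral remainder gives $\psi_n(\xi)=\int_0^1(1-s)\,\xi^{T}H_n(s,\xi)\,\xi\,\ddd s$, where $H_n(s,\xi)$ equals $|\xi_n|^{-(\alpha-2)}$ times the Hessian of $\eta\mapsto|\eta|^{\alpha}$ at $\xi_n+s\xi$; since that Hessian at $\eta$ is $\alpha|\eta|^{\alpha-2}I+\alpha(\alpha-2)|\eta|^{\alpha-4}\eta\otimes\eta$ and $(\xi_n+s\xi)/|\xi_n+s\xi|\to\xi_0$, one gets $H_n\to\alpha I+\alpha(\alpha-2)\xi_0\otimes\xi_0=2A_0^{T}A_0$ uniformly on $s\in[0,1]$, $|\xi|\le R$, and all $\xi$-derivatives of $\psi_n$ of order $\ge3$ tend to $0$ uniformly there, so $\psi_n\to|A_0\,\cdot\,|^2$ in $C^{\infty}(\{|\xi|\le R\})$. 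Dominated convergence in $\xi$ then gives, for every $(x,t)$,
\[\bar{T}_{\alpha}(\xi_n)f(x,t)\longrightarrow\bar{T}_{\infty}f(x,t):=\int_{\bR^d}e^{i(x\cdot\xi+t|A_0\xi|^2)}f(\xi)\,\ddd\xi.\]
As $A_0$ is symmetric and positive definite, the substitution $\xi=A_0^{-1}\eta$ followed by the linear change $x\mapsto A_0^{-1}x$ of the spatial variable gives $\bar{T}_{\infty}f(x,t)=|\det A_0|^{-1/2}\,\mathcal{E}_2\widetilde{A_0}^{-1}f(A_0^{-1}x,t)$, so, tracking the two Jacobians,
\[\big\Vert\bar{T}_{\infty}f\big\Vert_{L^{q}(\bR^{d+1})}=|\det A_0|^{\frac{1}{q}-\frac{1}{2}}\big\Vert\mathcal{E}_2\widetilde{A_0}^{-1}f\big\Vert_{L^{q}(\bR^{d+1})}=|\det A_0|^{-\frac{1}{d+2}}\big\Vert\mathcal{E}_2\widetilde{A_0}^{-1}f\big\Vert_{L^{q}(\bR^{d+1})}.\]
Since $A_0$ scales by $\sqrt{\alpha/2}$ on $\xi_0^{\perp}$ and by $\sqrt{\alpha(\alpha-1)/2}$ along $\xi_0$, we have $\det A_0=(\alpha/2)^{d/2}(\alpha-1)^{1/2}$, so the prefactor equals exactly ${\mathbf{\mathrm{a}}}^*_{d,\alpha}$. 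It remains to upgrade the pointwise convergence $\bar{T}_{\alpha}(\xi_n)f\to\bar{T}_{\infty}f$ to convergence in $L^{q}(\bR^{d+1})$.

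\emph{Norm convergence.} Pointwise convergence does not by itself control the $L^q$-norm, since mass could escape to infinity; ruling this out uniformly in $n$ is the crux. As $D^2\psi_n\to2A_0^{T}A_0>0$ uniformly on $\{|\xi|\le R\}$, for all large $n$ the phase $\psi_n$ is uniformly convex there, with Hessian bounded above and below independently of $n$; thus the surfaces $(\xi,\psi_n(\xi))$, $|\xi|\le R$, carry uniformly nonvanishing curvature on the fixed compact frequency support of $f$. A standard stationary-phase analysis of $x\cdot\xi+t\psi_n(\xi)$ — a unique nondegenerate critical point in the regime $|x|\lesssim_R 1+|t|$, and repeated integration by parts in $\xi$ in the regime $|x|\gtrsim_R 1+|t|$ — then yields, for a fixed $N$ with $Nq>d$ and all $n$ large, the uniform dispersive bound
\[\big|\bar{T}_{\alpha}(\xi_n)f(x,t)\big|\le C_{R,N}\,(1+|t|)^{-d/2}\Big(1+\tfrac{|x|}{1+|t|}\Big)^{-N}.\]
The right-hand side lies in $L^q(\bR^{d+1})$ (the $x$-integral converges since $Nq>d$, and the remaining $t$-integral converges since $\tfrac{d}{2}q-d=2>1$) and is independent of $n$, hence dominates $|\bar{T}_{\alpha}(\xi_n)f|$; by the dominated convergence theorem and the pointwise limit above, $\bar{T}_{\alpha}(\xi_n)f\to\bar{T}_{\infty}f$ in $L^q(\bR^{d+1})$. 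Undoing the reductions completes the proof.

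The genuinely delicate point is the last uniform dispersive bound: one must check that the perturbed phases $\psi_n$ stay uniformly nondegenerate on the frequency support, so that the implicit constants in the stationary-phase estimates do not deteriorate as $n\to\infty$. This is where the vanishing Gaussian curvature of $\mathbb{P}^d_{\alpha}$ at the origin could be an issue, and the reason it is not is precisely that $|\xi_n|\to\infty$ drives the frequency support away from that point; equivalently, the $\psi_n$ converge to the strictly convex limit $|A_0\,\cdot\,|^2$. (If one prefers to avoid quantitative decay bounds, the same conclusion follows from a soft argument: split the space--time integral over a large ball and its complement, estimating the complement by a uniform-in-$n$ Strichartz tail bound and using uniform convergence of $\bar{T}_{\alpha}(\xi_n)f$ to $\bar{T}_{\infty}f$ on compact sets.)
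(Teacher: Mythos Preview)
Your argument is correct and follows exactly the route the paper sketches for this cited lemma: use the identities \eqref{neq18} to pass to $\bar{T}_{\alpha}(\xi_n)$, identify the limiting quadratic phase $|A_0\xi|^2$ (with the constant ${\mathrm{a}}^*_{d,\alpha}$ arising from $|\det A_0|^{-1/(d+2)}$), and justify the limit via dominated convergence using a uniform stationary-phase bound. Your bound $(1+|t|)^{-d/2}\bigl(1+|x|/(1+|t|)\bigr)^{-N}$ is a slightly more detailed variant of the paper's stated decay \eqref{eq1}, but serves the same purpose.
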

We point out that this Lemma \ref{4} relies on the dominated convergence theorem and the classical stationary phase analysis, which gives the following decay estimate
\begin{equation}\label{eq1}
	\left|\bar{T}_{\alpha}(\xi_n)f(x,t)\right|\lesssim\left|\int_{\mathbb{R}^d} e ^{ i t\frac{|\xi+\xi_n|^{\alpha}-|\xi_n|^{\alpha}-\alpha|\xi_n|^{\alpha-2}\xi_n\cdot\xi}{|\xi_n|^{\alpha-2}}+ i x\cdot\xi}f(\xi)\mathrm{d}\xi\right|\lesssim \left(1+t^2+|x|^2\right)^{-\frac{d}{4}}
\end{equation}
for sufficiently large $n$. As we will see later, this estimate \eqref{eq1} implies that a sequence of functions which looks like $\{f_n\}$ in Lemma \ref{4} cannot be an extremal sequence for $M_{\alpha,p}$; on the other hand, based on the work of \cite[Theorem 1.4]{Shao2009EJDE} which claims the existence of extremals for $M_{2,2}$, this Lemma \ref{4} immediately implies the fact $\widetilde{M}_{\alpha}\geq  \mathbf{\mathrm{a}}^*_{d,\alpha} M_{2,2}$.

\begin{lem}[Lemma 4.4 of \cite{DY2024}]\label{5}
	If ${\xi_n}\subset\mathbb{R}^d$ with $|\xi_n|\to\infty$ as $n\to\infty$, and a bounded sequence of functions ${f_n}$ in $L^2(\mathbb{R}^d)$ satisfies $\mathrm{supp}f_n\subset\left\{\xi:|\xi|\leq {|\xi_n|}/{5}\right\}$, and $f_n\rightharpoonup0$ weakly in $L^2(\mathbb{R}^d)$ as $n\to\infty$, then up to subsequences we have $T_{\alpha}(\xi_n)f_n\to0$ strongly in $L^2_{\mathrm{loc}}(\mathbb{R}^{d+1})$, and hence
	$$\lim_{n\to\infty}T_{\alpha}(\xi_n)f_n(x,t)=0,\quad \text{a.e.} \;\; (x,t)\in \mathbb{R}^{d+1}.$$
\end{lem}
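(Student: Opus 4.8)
The plan is to identify $T_\alpha(\xi_n)f_n$ with $\widetilde{\mathcal{E}}_\alpha$ applied to a translated sequence, invoke the a.e.-convergence result of Lemma~\ref{2}, and then upgrade a.e.\ convergence to $L^2_{\mathrm{loc}}$ convergence by a uniform integrability argument. Performing the change of variables $\zeta=\xi+\xi_n$ in the definition of $T_\alpha(\xi_n)$ and using $\tfrac{\alpha-2}{2+4/d}=\tfrac{(\alpha-2)d}{2(d+2)}$, one obtains the pointwise identity
\[T_\alpha(\xi_n)f_n=\widetilde{\mathcal{E}}_\alpha g_n,\qquad g_n:=f_n(\cdot-\xi_n),\]
which is exactly the normalization already underlying \eqref{neq18}.

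The crux is to show that $g_n\rightharpoonup 0$ weakly in $L^2(\mathbb{R}^d)$. Boundedness is immediate from $\|g_n\|_{L^2}=\|f_n\|_{L^2}$. For weak nullity, fix $h\in L^2(\mathbb{R}^d)$; the support hypothesis gives $\supp g_n\subset\{|\zeta-\xi_n|\le|\xi_n|/5\}\subset\{|\zeta|\ge 4|\xi_n|/5\}$, so by Cauchy--Schwarz together with this support restriction,
\[|\langle g_n,h\rangle|=\bigl|\langle f_n,h(\cdot+\xi_n)\rangle\bigr|\le\|f_n\|_{L^2}\,\|h\|_{L^2(\{|\zeta|\ge 4|\xi_n|/5\})}\to 0\]
as $n\to\infty$, since $|\xi_n|\to\infty$. (The fraction $1/5$ plays no role; any fixed proportion strictly less than $1$ suffices, the point being merely that the supports of the $g_n$ escape to infinity.)

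With $g_n$ bounded in $L^2$ and $g_n\rightharpoonup 0$, Lemma~\ref{2} gives, along a subsequence, $\widetilde{\mathcal{E}}_\alpha g_n(x,t)\to 0$ for a.e.\ $(x,t)$, i.e.\ $T_\alpha(\xi_n)f_n\to 0$ a.e. To promote this to $L^2_{\mathrm{loc}}$, recall from \eqref{neq17} that $\|\widetilde{\mathcal{E}}_\alpha g_n\|_{L^{2+4/d}(\mathbb{R}^{d+1})}\le\widetilde{M}_\alpha\|g_n\|_{L^2}\lesssim 1$ uniformly in $n$; hence on any compact $K\subset\mathbb{R}^{d+1}$ the family $\{|\widetilde{\mathcal{E}}_\alpha g_n|^2\}$ is bounded in $L^{(d+2)/d}(K)$, therefore uniformly integrable, and since it converges to $0$ a.e.\ on $K$, Vitali's convergence theorem yields $\int_K|\widetilde{\mathcal{E}}_\alpha g_n|^2\to 0$. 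This is precisely $T_\alpha(\xi_n)f_n\to 0$ in $L^2_{\mathrm{loc}}(\mathbb{R}^{d+1})$, and the pointwise a.e.\ assertion follows (it is already contained in the application of Lemma~\ref{2}, or one may extract a further subsequence from the $L^2_{\mathrm{loc}}$ convergence).

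The only genuinely delicate point is the weak convergence $g_n\rightharpoonup 0$: translating a weakly null $L^2$ sequence by vectors tending to infinity need not preserve weak nullity in general, and it is exactly the frequency localization $\supp f_n\subset\{|\xi|\le|\xi_n|/5\}$ that drives $\supp g_n$ off to infinity and makes the tail estimate above work. The remaining ingredients — the change of variables, Lemma~\ref{2}, and the uniform Strichartz bound \eqref{neq17} — are routine.
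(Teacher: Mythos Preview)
Your argument is correct. The paper does not give its own proof of this lemma; it is quoted verbatim as Lemma~4.4 of \cite{DY2024}, so there is no in-paper proof to compare against. Your route---rewriting $T_\alpha(\xi_n)f_n=\widetilde{\mathcal{E}}_\alpha g_n$ with $g_n=f_n(\cdot-\xi_n)$, using the support hypothesis to force $\supp g_n\subset\{|\zeta|\ge 4|\xi_n|/5\}$ and hence $g_n\rightharpoonup 0$, invoking Lemma~\ref{2} for a.e.\ convergence, and then upgrading via the uniform $L^{2+4/d}$ Strichartz bound \eqref{neq17} and Vitali---is clean and complete. One incidental observation: your weak-nullity step for $g_n$ uses only the support condition and $|\xi_n|\to\infty$, not the hypothesis $f_n\rightharpoonup 0$, so your proof in fact shows the lemma holds without that assumption.
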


Finally, by the endpoint Strichartz estimate of Keel-Tao \cite[Theorem 1.2]{KT1998} and some stationary phase arguments, one can obtain the following Sobolev-type Strichartz estimate. Further details can be seen, for example, in the paper \cite[Proof of Theorem 1.2]{COX2011}.
\begin{lem} \label{9}
	For all parameters $(d,q,r,s)$ satisfying
	\[\frac{d}{r}+\frac{2}{q}\leq \frac{d}{2},\quad q,r\geq 2,\quad s=\frac{d}{2}-\frac{\alpha}{q}-\frac{d}{r},\]
	there holds
	$$\Vert\mathcal{E}_{\alpha}f\Vert_{L^q(\mathbb{R})L^r(\mathbb{R}^d)}\lesssim\left\Vert |\cdot|^sf\right\Vert_{L^2(\mathbb{R}^d)}.$$
\end{lem}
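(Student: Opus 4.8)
The plan is to view $\mathcal{E}_{\alpha}f(\cdot,t)$ as (essentially) the fractional \schrodinger propagator $e^{it(-\Delta)^{\alpha/2}}$ acting on a function with Fourier data $f$, and to prove the estimate through a Littlewood--Paley decomposition in frequency. Since $|\xi|^{\alpha}$ is homogeneous and its Hessian degenerates at the origin, no global dispersive bound for $\mathcal{E}_{\alpha}$ is available; but on each dyadic annulus $\{|\xi|\sim 2^{k}\}$ the phase is nondegenerate, and rescaling that annulus to the unit one reduces the matter to a single classical Strichartz estimate. Concretely, fix $\psi\in C^{\infty}_{c}(\bR^{d})$ supported in $\{\tfrac12<|\xi|<2\}$ with $\sum_{k\in\bZ}\psi(2^{-k}\xi)=1$ for $\xi\neq0$, put $f_{k}:=\psi(2^{-k}\cdot)f$ so that $f=\sum_{k}f_{k}$ with bounded overlap of supports, and reduce to the uniform single-annulus estimate
\[
	\Vert\mathcal{E}_{\alpha}f_{k}\Vert_{L^{q}_{t}L^{r}_{x}}\lesssim 2^{ks}\Vert f_{k}\Vert_{L^{2}(\bR^{d})}\sim\Vert\,|\cdot|^{s}f_{k}\Vert_{L^{2}(\bR^{d})},\qquad k\in\bZ.
\]

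For this estimate one may first take $k=0$. The kernel $K_{t}(x):=\int_{\bR^{d}}e^{ix\cdot\xi+it|\xi|^{\alpha}}\psi(\xi)\,\ddd\xi$ obeys $\Vert K_{t}\Vert_{L^{\infty}_{x}}\lesssim1$ trivially, while for $|t|\gtrsim1$ stationary phase gives $\Vert K_{t}\Vert_{L^{\infty}_{x}}\lesssim|t|^{-d/2}$: on $\supp\psi$ the critical equation $\alpha|\xi|^{\alpha-2}\xi=-x/t$ has at most one solution, at which the Hessian of $\xi\mapsto|\xi|^{\alpha}$ has eigenvalues $\alpha$ (multiplicity $d-1$, directions orthogonal to $\xi$) and $\alpha(\alpha-1)$ (radial direction), hence nonvanishing determinant, with all constants uniform over the annulus. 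Together with the trivial $L^{2}_{x}$ bound from Plancherel, the abstract Strichartz theorem of Keel--Tao \cite{KT1998} (dispersion exponent $\sigma=d/2$) yields $\Vert\mathcal{E}_{\alpha}f_{0}\Vert_{L^{q}_{t}L^{r_{0}}_{x}}\lesssim\Vert f_{0}\Vert_{L^{2}}$ for the \schrodinger-admissible $r_{0}$ given by $\tfrac{2}{q}+\tfrac{d}{r_{0}}=\tfrac{d}{2}$, away from the endpoint $(d,q,r_{0})=(2,2,\infty)$; Bernstein's inequality on the annulus then raises $L^{r_{0}}_{x}$ to $L^{r}_{x}$ for every $r\in[r_{0},\infty]$ (and $r\ge r_{0}$ is exactly the hypothesis $\tfrac{d}{r}+\tfrac{2}{q}\le\tfrac{d}{2}$). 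Finally, writing $f_{k}=g(2^{-k}\cdot)$ one has the identity $\mathcal{E}_{\alpha}f_{k}(x,t)=2^{kd}(\mathcal{E}_{\alpha}g)(2^{k}x,2^{k\alpha}t)$; collecting the powers of $2^{k}$ from the substitutions $x\mapsto2^{k}x$, $t\mapsto2^{k\alpha}t$ and from $\Vert f_{k}\Vert_{L^{2}}=2^{kd/2}\Vert g\Vert_{L^{2}}$, together with the Bernstein factor, produces exactly the exponent $s=\tfrac{d}{2}-\tfrac{\alpha}{q}-\tfrac{d}{r}$ (the unique value making the estimate scale invariant), and the displayed single-annulus bound follows since $|\xi|\sim2^{k}$ on $\supp f_{k}$.

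It remains to sum in $k$. As $q,r\ge2$, one uses the Littlewood--Paley square-function inequality in $L^{r}_{x}$ (valid for $2\le r<\infty$) followed by Minkowski's inequality, first in $L^{r/2}_{x}$ and then in $L^{q/2}_{t}$, to get
\[
	\Vert\mathcal{E}_{\alpha}f\Vert_{L^{q}_{t}L^{r}_{x}}\lesssim\Big(\sum_{k\in\bZ}\Vert\mathcal{E}_{\alpha}f_{k}\Vert_{L^{q}_{t}L^{r}_{x}}^{2}\Big)^{1/2}\lesssim\Big(\sum_{k\in\bZ}\Vert\,|\cdot|^{s}f_{k}\Vert_{L^{2}}^{2}\Big)^{1/2}\sim\Vert\,|\cdot|^{s}f\Vert_{L^{2}(\bR^{d})},
\]
the last step by the bounded overlap of the supports of the $f_{k}$; the case $r=\infty$ follows from the cases $r<\infty$ by a further Sobolev embedding in the spatial variable. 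I do not expect a substantial obstacle here: the only points needing genuine care are the uniformity of the stationary-phase constants over the annulus, so that Keel--Tao applies with $k$-independent bounds, and the exponent bookkeeping in the rescaling, which must land precisely on $s$; beyond isolating (or treating by hand) the Keel--Tao endpoint $d=2,\ q=2,\ r=\infty$, the argument is routine Strichartz theory.
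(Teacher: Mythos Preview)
Your proposal is correct and is precisely the argument the paper has in mind: the paper does not give a proof but merely says the lemma follows ``by the endpoint Strichartz estimate of Keel--Tao \cite{KT1998} and some stationary phase arguments'' with details deferred to \cite{COX2011}, and the proof in \cite{COX2011} is exactly the Littlewood--Paley decomposition $+$ stationary phase on each annulus $+$ Keel--Tao $+$ rescaling that you outline. Your caveat about the forbidden endpoint $(d,q,r)=(2,2,\infty)$ is appropriate; the paper's statement omits this exclusion, but its only application (the proof of Lemma~\ref{14}) has enough freedom in the choice of exponents to avoid it.
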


\subsection{The $L^p$ consequences} \label{SubS:Lp preliminaries}
As mentioned in the introduction, the $L^p$ extremals for Fourier extension type estimates have been studied for several situations such as the paraboloid \cite{Stovall2020}, sphere \cite{FS2024}, cone \cite{NOST2023}, and moment curve \cite{BS2023}. Here we summarize some of their results, which play fundamental roles in our arguments.

\begin{lem}[Lemma 2.3 of \cite{Stovall2020}] \label{L:Xsb to chip refinement}
	Assume that $s<p<2t$ and $\max\{{p}/{2t}, {s}/{p}\}<\theta<1$. Then for the function $\Vert f\Vert_{L^p(\mathbb{R}^d)}=1$, there exists $c_0>0$ such that
	\[\left(\sum_k\sum_{\tau\in\mathcal{D}_k}|\tau|^{-2t(\frac{1}{s}-\frac{1}{p})}\Vert f_{\tau}\Vert_{L^s(\mathbb{R}^d)}^{2t}\right)^{\frac{1}{2t}} \lesssim\sup_{k\in\mathbb{Z}}\sup_{\tau\in\mathcal{D}_k}\sup_{l\geq 0}2^{-c_0l}\Vert f_{\tau,l}\Vert_{L^p(\mathbb{R}^d)}^{\theta},\]
	where $f_{\tau,l}$ equals $f$ multiplied by the characteristic function of $\tau \cap \{\xi: |f|< 2^l \|f\|_p |\tau|^{-1/p}\}$.
\end{lem}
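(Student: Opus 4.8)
\emph{Proof idea.} The plan is to run the dyadic‑pigeonholing refinement of Bourgain and of Moyua--Vargas--Vega, keeping careful track of exponents so that the numerical hypotheses $s<p<2t$ and $\max\{p/(2t),s/p\}<\theta<1$ are exactly what the argument consumes. Normalise $\Vert f\Vert_{L^p(\bR^d)}=1$, write $A$ for the right‑hand side, and note $A\le 1$. For a dyadic cube $\tau$ set $\lambda_\tau:=|\tau|^{-1/p}$ and split $f_\tau$ according to the size of $|f|$ relative to $\lambda_\tau$: let $g_0^\tau:=f\chi_{\tau\cap\{|f|<\lambda_\tau\}}$ and, for $l\ge1$, $g_l^\tau:=f\chi_{\tau\cap\{2^{l-1}\lambda_\tau\le|f|<2^l\lambda_\tau\}}$, so that $f_\tau=\sum_{l\ge0}g_l^\tau$ and, crucially, $\sum_{l'=0}^{l}g_{l'}^\tau=f_{\tau,l}$; hence $\Vert g_l^\tau\Vert_{L^p}\le\Vert f_{\tau,l}\Vert_{L^p}+\Vert f_{\tau,l-1}\Vert_{L^p}\lesssim(A\,2^{c_0l})^{1/\theta}$, while also $\Vert g_l^\tau\Vert_{L^p}\le1$.

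First I would pass from $L^s$ to $L^p$ on each level set. Since $|g_l^\tau|\sim2^l\lambda_\tau$ on its support when $l\ge1$, while $g_0^\tau$ has height $\le\lambda_\tau$, the \holder inequality gives
\[\Vert g_l^\tau\Vert_{L^s}\lesssim 2^{-l(p-s)/s}\,|\tau|^{\frac1s-\frac1p}\,\Vert g_l^\tau\Vert_{L^p}^{p/s}\quad(l\ge1),\qquad\Vert g_0^\tau\Vert_{L^s}\le|\tau|^{\frac1s-\frac1p}\,\Vert g_0^\tau\Vert_{L^p},\]
so, summing in $l$ and raising to the power $2t$, the $|\tau|$–weight is absorbed and
\[|\tau|^{-2t(\frac1s-\frac1p)}\Vert f_\tau\Vert_{L^s}^{2t}\lesssim\Big(\Vert g_0^\tau\Vert_{L^p}+\sum_{l\ge1}2^{-l(p-s)/s}\Vert g_l^\tau\Vert_{L^p}^{p/s}\Big)^{2t}.\]
Because $p/s>1>\theta$ and $\Vert g_l^\tau\Vert_{L^p}\le1$, I would interpolate $\Vert g_l^\tau\Vert_{L^p}^{p/s}=\Vert g_l^\tau\Vert_{L^p}^{p/s-\theta}\Vert g_l^\tau\Vert_{L^p}^{\theta}\lesssim\Vert g_l^\tau\Vert_{L^p}^{p/s-\theta}\,A\,2^{c_0l}$, extracting a factor of $A$ while keeping an honest power of the $L^p$–mass; choosing $c_0$ small enough absorbs the factors $2^{c_0l}$ against the geometric weight $2^{-l(p-s)/s}$.

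Next I would expand the $2t$‑th power (via a power–mean inequality against the geometric weights) and sum, first over the cubes $\tau\in\mathcal D_k$ of a fixed side length $2^{-k}$ — whose supports are pairwise disjoint, so that $\sum_{\tau\in\mathcal D_k}\Vert g_l^\tau\Vert_{L^p}^{p}\le1$ — and then over $k\in\bZ$. For this last sum one exploits the decay built into $|\tau|^{-2t(\frac1s-\frac1p)}$ for very coarse cubes (where $\tfrac1s>\tfrac1p$) together with the gain $2t>s$ for very fine cubes, while the refinement quantity $A$ bounds the number of relevant cubes in the intermediate range of scales. \textbf{The main obstacle is precisely this exponent bookkeeping:} one must distribute the single power $2t$ so as to (i) extract a full power $A^{2t}$ of the right‑hand side, (ii) retain enough $\ell^p$–orthogonality of the dyadic cubes at each fixed scale, and (iii) leave a genuinely summable tail in the scale parameter; that (i)--(iii) can be met simultaneously is exactly what is encoded in the constraints $s<p<2t$ and $\max\{p/(2t),s/p\}<\theta<1$. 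Once all three summations are controlled, taking $2t$‑th roots yields the asserted inequality.
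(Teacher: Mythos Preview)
The paper does not supply a proof of this lemma; it is quoted as a black box from \cite{Stovall2020}, so there is no in-text argument to compare against directly.

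Your outline is sound through the level-set decomposition $f_\tau=\sum_{l\ge0}g_l^\tau$, the $L^s\to L^p$ passage on each piece, and the extraction of $A$ via $\Vert g_l^\tau\Vert_{L^p}^\theta\le\Vert f_{\tau,l}\Vert_{L^p}^\theta\le A\,2^{c_0l}$. The genuine gap is the summation over the scale index $k\in\bZ$, and your stated mechanism for it is internally inconsistent: having already declared the weight $|\tau|^{-2t(1/s-1/p)}$ ``absorbed'' by the $L^s\to L^p$ step, you then invoke ``the decay built into $|\tau|^{-2t(1/s-1/p)}$'' to control coarse scales---but that weight is spent and cannot be used twice. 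Similarly, saying that ``$A$ bounds the number of relevant cubes in the intermediate range'' would cost additional powers of $A$ beyond the $A^{2t}$ you have already committed. With only the fixed-scale disjointness $\sum_{\tau\in\mathcal D_k}\Vert g_l^\tau\Vert_{L^p}^p\le1$ at your disposal, the $k$-sum simply diverges. The missing structural ingredient is a bounded-overlap observation \emph{across} scales: for fixed $l\ne0$, a point $\xi$ lies in $\supp g_l^\tau$ only if $|f(\xi)|\sim2^l|\tau|^{-1/p}$, which determines the side-length of $\tau$ (equivalently the scale $k$) to within a bounded factor, so in fact $\sum_k\sum_{\tau\in\mathcal D_k}\Vert g_l^\tau\Vert_{L^p}^p\lesssim\Vert f\Vert_{L^p}^p=1$ uniformly in $l$. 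It is this almost-orthogonality in $(k,\tau)$, together with the geometric decay in $l$, that closes the estimate; the hypotheses $s<p<2t$ and $\theta>\max\{p/(2t),s/p\}$ are precisely what allow one to split the exponent $2t\cdot p/s$ among the $A$-extraction, the $\ell^p$-orthogonality, and a nonnegative residual.
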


\begin{lem}[Lemma 3.3 of \cite{NOST2023}] \label{L:Bilinear to linear refinement}
	For two measure spaces $(X,\mu)$ and $(Y,\nu)$, consider the bounded linear map $T:L^p(X)\to L^q(Y)$ with $1<p<q<\infty$. Let $\{P_j\}_{j\in\mathbb{Z}}$ be a sequence of bounded linear operators on $L^p(X)$ such that $\sum_{j\in\mathbb{Z}}P_j$ converges to the identity in strong topology and
	\[\sum_{j\in\mathbb{Z}}\Vert P_jf\Vert_{L^p(X)}^p\lesssim \Vert f\Vert_{L^p(X)}^p, \quad \forall f\in L^p(X).\]
	Assume that there exists $c_0>0$ such that
	\[\l\| (TP_jf)(TP_kg) \r\|_{L^{\frac{q}{2}}(Y)} \lesssim 2^{-c_0|j-k|}\Vert f\Vert_{L^p(X)}\Vert g\Vert_{L^p(X)}, \quad \forall (j,k) \in \bZ^2\]
	holds for arbitrary $L^p(X)$ functions $f$ and $g$. Then there exists $\theta \in(0,1)$ such that
	\[\Vert Tf\Vert_{L^q(Y)} \lesssim \sup_{j\in\mathbb{Z}}\Vert TP_jf\Vert_{L^q(Y)}^{1-\theta}\Vert f\Vert_{L^p(X)}^{1-\theta}, \quad \forall f\in L^p(X).\]
\end{lem}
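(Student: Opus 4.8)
The plan is to run the standard bilinear-to-linear passage (in the spirit of Tao--Vargas--Vega and Bourgain), viewing $\{P_j\}_{j\in\mathbb{Z}}$ as a frequency-type decomposition of $L^p(X)$: the off-diagonal decay $2^{-c_0|j-k|}$ is interpolated against the uniform $L^q$-bound, and the residual factors are absorbed by the almost-orthogonality hypothesis $\sum_j\|P_jf\|_{L^p(X)}^p\lesssim\|f\|_{L^p(X)}^p$. Throughout fix $f\in L^p(X)$, write $F_j:=TP_jf\in L^q(Y)$, and put $\lambda:=\|f\|_{L^p(X)}$ and $\mu:=\sup_{j}\|F_j\|_{L^q(Y)}$; the goal is a bound $\|Tf\|_{L^q(Y)}\lesssim\mu^{\theta}\lambda^{1-\theta}$ with $\theta\in(0,1)$ independent of $f$.

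\emph{Step 1: reduction to the bilinear pieces.} Since $\sum_jP_j$ converges to the identity in the strong topology and $T$ is bounded $L^p(X)\to L^q(Y)$, one has $Tf=\sum_j F_j$ in $L^q(Y)$; squaring the partial sums and letting the truncation go to infinity, using $\|a^2-b^2\|_{L^{q/2}}\le\|a-b\|_{L^q}\|a+b\|_{L^q}$, gives $(Tf)^2=\sum_{(j,k)\in\mathbb{Z}^2}F_jF_k$ in $L^{q/2}(Y)$. The (quasi-)triangle inequality in $L^{q/2}(Y)$ — a bounded loss when $q\ge 2$, and the elementary bound $\|\sum_i a_i\|_{L^{q/2}}^{q/2}\le\sum_i\|a_i\|_{L^{q/2}}^{q/2}$ when $q<2$ — then reduces the matter to summing $\|F_jF_k\|_{L^{q/2}(Y)}$ (resp.\ its $q/2$-th powers) over $(j,k)\in\mathbb{Z}^2$; I describe the case $q\ge2$, the other being identical modulo these exponents.

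\emph{Step 2: interpolation and summation.} Each bilinear piece obeys two bounds: Hölder gives $\|F_jF_k\|_{L^{q/2}(Y)}\le\|F_j\|_{L^q(Y)}\|F_k\|_{L^q(Y)}\le\mu^2$, while feeding $P_jf$ and $P_kf$ into the bilinear hypothesis (legitimate because in the settings of interest $P_j^2=P_j$, so $TP_j(P_jf)=F_j$) gives $\|F_jF_k\|_{L^{q/2}(Y)}\lesssim 2^{-c_0|j-k|}a_ja_k$ where $a_j:=\|P_jf\|_{L^p(X)}$. Interpolating, for any $\sigma\in(0,1)$,
\[\|F_jF_k\|_{L^{q/2}(Y)}\lesssim\mu^{2(1-\sigma)}\,2^{-c_0\sigma|j-k|}\,(a_ja_k)^{\sigma}.\]
The hypothesis $\sum_j a_j^p\lesssim\lambda^p$ forces $\sup_j a_j\lesssim\lambda$, whence $\sum_j a_j^{2\sigma}=\sum_j a_j^p a_j^{2\sigma-p}\lesssim\lambda^{2\sigma-p}\sum_j a_j^p\lesssim\lambda^{2\sigma}$ as soon as $\sigma\ge p/2$. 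Fixing such a $\sigma\in[p/2,1)$ — available precisely because $p<2$, the range at stake — and using Cauchy--Schwarz in $j$,
\[\sum_{j,k}2^{-c_0\sigma|j-k|}(a_ja_k)^{\sigma}=\sum_{m\in\mathbb{Z}}2^{-c_0\sigma|m|}\sum_j a_j^{\sigma}a_{j+m}^{\sigma}\le\Big(\sum_{m\in\mathbb{Z}}2^{-c_0\sigma|m|}\Big)\sum_j a_j^{2\sigma}\lesssim\lambda^{2\sigma},\]
so that $\|Tf\|_{L^q(Y)}^2\lesssim\mu^{2(1-\sigma)}\lambda^{2\sigma}$, i.e.\ $\|Tf\|_{L^q(Y)}\lesssim\big(\sup_j\|TP_jf\|_{L^q(Y)}\big)^{\theta}\|f\|_{L^p(X)}^{1-\theta}$ with $\theta:=1-\sigma\in(0,1-\tfrac{p}{2}]$. (At $p=2$ this choice of $\sigma$ degenerates; that endpoint is classical — the refinement there is orthogonal in nature — and in any case in the present paper the $L^2$ refinement is supplied directly by Lemma \ref{1} rather than by this lemma.)

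\emph{Main obstacle.} The crux is the convergence of $\sum_{j,k}\|F_jF_k\|_{L^{q/2}(Y)}$: the off-diagonal decay alone will not suffice, since for fixed $j$ the sum over $k$ converges but the remaining sum over $j\in\mathbb{Z}$ does not, while the uniform bound $\mu^2$ alone is even worse; the decay must be traded against $\mu^2$ by interpolation, and the surviving power $\sigma$ of the localized masses $a_j$ must then be absorbed by $\sum_j a_j^p\lesssim\lambda^p$ — and these three ingredients are mutually compatible only when $\sigma\ge p/2$, which is exactly what pins down the admissible exponents $\theta$. A minor bookkeeping point is the step ``feed $P_jf,P_kf$ into the hypothesis'': when the $P_j$ are not literally idempotent one replaces $P_j$ there by a fattened operator $\widetilde P_j$ with $P_j=P_j\widetilde P_j$ and $\sum_j\|\widetilde P_jf\|_{L^p(X)}^p\lesssim\lambda^p$, which alters nothing.
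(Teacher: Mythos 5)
The paper itself does not prove this lemma (it is imported verbatim from NOST2023), so your argument can only be measured against the standard bilinear-to-linear scheme, and in outline you have that scheme right: expand $(Tf)^2$ over pairs, interpolate the off-diagonal decay against the uniform bound $\mu=\sup_j\|TP_jf\|_{L^q}$, and absorb the localized masses $a_j=\|P_jf\|_{L^p}$ through $\sum_j a_j^p\lesssim\|f\|_{L^p}^p$; the summation bookkeeping (Cauchy--Schwarz in the shifted index, $\sum_j a_j^{2\sigma}\lesssim\lambda^{2\sigma}$ once $2\sigma\geq p$) is correct, and you in fact prove the scale-invariant form $\|Tf\|_{L^q}\lesssim\mu^{\theta}\lambda^{1-\theta}$, which is clearly what the quoted statement intends (its display with two exponents $1-\theta$ is a typo).

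There are, however, two points to flag. First, your key display $\|F_jF_k\|_{L^{q/2}}\lesssim 2^{-c_0|j-k|}a_ja_k$ is obtained by feeding $P_jf,P_kf$ into the bilinear hypothesis and invoking $P_j^2=P_j$; idempotency is not among the hypotheses of the abstract lemma, and neither is the existence of your ``fattened'' $\widetilde P_j$ with $P_j\widetilde P_j=P_j$ and $\ell^p$ control, so as written this step imports structure from the intended application (where $P_N$ is multiplication by $\chi_{\mathcal{A}_N}$, which is indeed a projection) rather than proving the stated lemma. The gap is easily repaired inside the stated hypotheses: boundedness of $T$ gives $\|F_j\|_{L^q}\lesssim a_j$, hence by H\"older $\|F_jF_k\|_{L^{q/2}}\lesssim\min(\mu,Ca_j)\min(\mu,Ca_k)$, and taking a three-way geometric mean of this, the uniform bound $\mu^2$, and the bilinear hypothesis applied to the pair $(f,f)$ (which yields $2^{-c_0|j-k|}\lambda^2$), say with exponents $\delta,\gamma,\beta$ summing to one and $\delta\geq p/2$, your summation then runs verbatim and gives $\theta=\gamma>0$. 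Second, the squaring reduction caps the argument at $p<2$ when $q\geq2$ (your constraint $\sigma\geq p/2$, or $\delta\geq p/2$ in the repaired version), so the proposal does not cover the full range $1<p<q<\infty$ asserted in the quoted statement: the case $2\leq p<q$ would require a genuinely different reduction, not just the $p=2$ endpoint remark you make. Since this paper only applies the lemma with $p\in(1,2)$ and $q=\frac{d+\alpha}{d}p'>2$, this is a limitation of scope rather than an obstacle to the paper's use of it, but it should be stated as such.
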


\begin{lem}[Lemma 4.3 of \cite{Stovall2020}]\label{17}
	Assume that $\varphi$ and $\psi$ are compactly supported smooth nonnegative functions satisfying  $\|\varphi\|_{L^{\infty}(\bR^d)}=\Vert \psi\Vert_{L^1(\mathbb{R}^d)}=1$. Consider the parameters
	\[\{(x_n^j,t_n^j):n\in\mathbb{N},j\in\mathbb{N}\}\subset\mathbb{R}^{d+1}, \quad \lim\limits_{n\to\infty}(|x_n^j-x_n^k|+|t_n^j-t_n^k|)=\infty,\quad \forall j\ne k.\]
	For the integers $j\in\mathbb{N}$ and $J\in \bN$, define the following two projection-type operators
	\[\pi(x_n^j,t_n^j)f(\xi) :=e^{-i(x_n^j,t_n^j)(\xi,|\xi|^{\alpha})} \left(\psi* \l(e^{i(x_n^j,t_n^j)(\cdot,|\cdot|^{\alpha})} \varphi f\r)\right)(\xi), \quad \Pi_n^J:=\l(\pi(x_n^j,t_n^j)\r)_{j=1}^J\]
	Then for each $J\in\bN$, we have the following estimate
	\[\limsup\limits_{n\to\infty}\Vert\Pi_n^J\Vert_{L^p\to \ell^{p^*}(L^p)}\leq 1, \quad p^*:= \max \{p,p'\}.\]
\end{lem}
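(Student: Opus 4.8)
The plan is to reduce this to a multilinear convexity/almost-orthogonality estimate for the Fourier extension operator, treating the operators $\pi(x_n^j,t_n^j)$ as approximate frequency-localized projections whose physical-space ``centers'' $(x_n^j,t_n^j)$ diverge from each other. First I would unwind the definition: modulating and demodulating shows that $\mathcal{E}_\alpha\bigl(\pi(x_n^j,t_n^j)f\bigr)(x,t)$ equals a convolution in $(x,t)$ of $\mathcal{E}_\alpha(\varphi f)$ with a fixed Schwartz-type kernel $\widehat{\psi}$ translated by $(x_n^j,t_n^j)$; since $\|\psi\|_{L^1}=1$ and $\varphi$ is a fixed compactly supported bump with $\|\varphi\|_\infty=1$, the single-operator bound $\|\pi(x_n^j,t_n^j)\|_{L^p\to L^p}\le 1$ holds uniformly (Young's inequality, using $\|\psi\|_{L^1}=\|\varphi\|_\infty=1$), so only the $\ell^{p^*}$-summation over $j=1,\dots,J$ needs work, and only in the limit $n\to\infty$.

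The key step is to exploit spatial separation of the translates. Because $\widehat\psi$ is (essentially) Schwartz, the kernel $\widehat\psi(\cdot-(x_n^j,t_n^j))$ is, up to a rapidly decaying tail, supported near $(x_n^j,t_n^j)$; since $|x_n^j-x_n^k|+|t_n^j-t_n^k|\to\infty$ for $j\ne k$, the functions $\mathcal{E}_\alpha(\pi(x_n^j,t_n^j)f)$ become asymptotically disjointly supported (in an $L^p$-averaged sense) as $n\to\infty$. Concretely I would split $\widehat\psi = \widehat\psi\,\chi_{B_R} + \widehat\psi\,\chi_{B_R^c}$; the tail contributes $O_R(1)$ with the $O_R(1)$ made small by choosing $R$ large, uniformly in $n,j$, while for the main part, once $n$ is large enough (depending on $J$ and $R$) the balls $B(({x_n^j,t_n^j}),R)$ are pairwise disjoint, so the near-diagonal pieces are genuinely disjointly supported. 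For disjointly supported functions one has the exact identity $\bigl\|(g_j)_j\bigr\|_{\ell^{p^*}(L^p)}\le\|\sum_j g_j\|_{L^p}$ when $p^*=p$ (i.e. $p\le 2$) and the reverse-type bound $\bigl\|(g_j)_j\bigr\|_{\ell^{p}(L^p)}=\|\sum_j g_j\|_{L^p}$ is what controls the $p'$ case by duality — in either case the disjoint-support contribution is bounded by $\|\sum_j \pi(x_n^j,t_n^j)f\|_{L^p}\le \|f\|_{L^p}$ after using that $\sum_j\pi(x_n^j,t_n^j)$ is itself a convolution with $\sum_j\widehat\psi(\cdot-(x_n^j,t_n^j))\widehat\varphi$-type kernel of total mass $\le 1$ asymptotically. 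Collecting the pieces and letting $R\to\infty$ after $n\to\infty$ gives $\limsup_n\|\Pi_n^J\|_{L^p\to\ell^{p^*}(L^p)}\le 1$.

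I expect the main obstacle to be handling the case $p^*=p'>p$ (that is, $1\le p<2$ on the $\ell^{p'}$ side) cleanly: there the elementary ``disjoint supports $\Rightarrow$ sum of $\ell^p$ norms equals $L^p$ norm of sum'' no longer gives an $\ell^{p^*}$ bound for free, and one must instead interpolate between the trivial $\ell^\infty(L^p)$ bound (each $\|\pi(x_n^j,t_n^j)f\|_{L^p}\le\|f\|_{L^p}$) and an $\ell^1(L^p)$-type bound, or argue by duality against $\ell^{p}$ sequences; the rapidly decaying tails make the interpolation constants converge to $1$ only in the limit, so the order of limits ($n\to\infty$ first, then $R\to\infty$, then a final $\epsilon$-cleanup) has to be arranged carefully. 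A secondary technical point is that $\varphi f$ rather than $f$ appears inside, so one should absorb $\|\varphi\|_\infty=1$ at the very start and track that the compact support of $\varphi$ does not interfere with the convolution structure; this is routine but must be stated. Since this lemma is quoted verbatim from Stovall \cite[Lemma 4.3]{Stovall2020} with $|\xi|^2$ replaced by $|\xi|^\alpha$ — a change that affects only the phase inside $\mathcal{E}_\alpha$ and not the kernel estimates above — I would in fact present the argument as a direct adaptation, noting the single place ($L^1$-mass of the kernel being a Schwartz function) where the value of $\alpha$ enters and verifying it causes no change.
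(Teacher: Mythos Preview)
Your closing paragraph is the only correct part: the paper does not prove this lemma at all --- it is simply cited from \cite[Lemma 4.3]{Stovall2020}, and the honest ``proof'' is to observe that replacing $|\xi|^2$ by $|\xi|^{\alpha}$ in Stovall's argument changes nothing, since the phase enters only through oscillatory-integral decay that is insensitive to the value of $\alpha$.

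Your detailed proposal, however, contains a genuine error of approach. The claimed identity that $\mathcal{E}_\alpha\bigl(\pi(x_n^j,t_n^j)f\bigr)$ is a space-time convolution of $\mathcal{E}_\alpha(\varphi f)$ with a translated copy of $\widehat{\psi}$ is false: the extension operator does not intertwine frequency-side convolution with space-time convolution, because the phase $|\xi|^{\alpha}$ is nonlinear (writing it out, $\mathcal{E}_\alpha(\psi*g)$ involves $e^{is|\xi'+\eta|^{\alpha}}$ rather than $e^{is|\xi'|^{\alpha}}$ after the change of variables, and these do not separate). More fundamentally, even if such a structure held, it would give you information about the $L^q(\mathbb{R}^{d+1})$ norms of the extensions, whereas the lemma concerns the $L^p(\mathbb{R}^d)$ norms of the $\pi_n^j f$ themselves --- and these functions all live on the fixed compact set $\operatorname{supp}\varphi + \operatorname{supp}\psi$, so no disjoint-support argument on $\mathbb{R}^d$ is available. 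The assertion that $\sum_j \pi(x_n^j,t_n^j)$ is ``a convolution with total mass $\leq 1$'' is also incorrect: the different $j$'s involve different nonlinear modulations, so the sum is not a convolution operator at all.

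Stovall's actual argument stays entirely on $L^p(\mathbb{R}^d)$ and never invokes the extension operator. One interpolates (in the vector-valued sense) between the endpoints $p\in\{1,\infty\}$, where the $\ell^\infty(L^p)$ bound is immediate from your correct single-operator estimate, and $p=2$, where one needs $\sum_j\|\pi_n^j f\|_2^2\leq(1+o(1))\|f\|_2^2$. The $p=2$ case is handled by duality: one expands $\|\sum_j(\pi_n^j)^*g_j\|_2^2$ and shows the off-diagonal operators $\pi_n^j(\pi_n^k)^*$ tend to zero in operator norm. Their integral kernels have the form
\[
K_n(\xi,\zeta)=e^{-i\theta_n^j(\xi)+i\theta_n^k(\zeta)}\int \psi(\xi-\eta)\,\tilde\psi(\eta-\zeta)\,\varphi^2(\eta)\,e^{i[(x_n^j-x_n^k)\cdot\eta+(t_n^j-t_n^k)|\eta|^{\alpha}]}\,d\eta,
\]
which are supported in a fixed compact set in $(\xi,\zeta)$ and decay uniformly to zero by non-stationary phase as $|x_n^j-x_n^k|+|t_n^j-t_n^k|\to\infty$; Schur's test then gives the operator-norm decay. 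This is where the diverging parameters enter, and this is the step that is unchanged when $|\xi|^2$ becomes $|\xi|^{\alpha}$.
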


\section{The $L^2$ theory} \label{ns1}
The $L^2$-based profile decomposition is one of the main ingredients to show the desired $L^p$-precompact result Theorem \ref{n1}. Thus, in this section, we investigate the $L^2$ case first. As mentioned above, Di-Yan \cite{DY2024} has investigated the $L^2$-precompactness by the missing mass method and established some preliminaries. Here we will show the desired $L^2$-based profile decomposition in Subsection \ref{SubS:L2-profile decomposition}, and then give some corollaries in Subsection \ref{SubS:Coro for L2-profile decomposition}.

\subsection{The $L^2$-based profile decomposition} \label{SubS:L2-profile decomposition}
To establish the desired profile decomposition, one main tool is the following inverse Strichartz estimate.
\begin{lem}[Inverse Strichartz estimate]\label{7}
	Let $\alpha> 2$ and $\{f_n\}$ be a bounded sequence in $L^2(\mathbb{R}^d)$ with $d\geq 2$. Assuming 
	$$\limsup_{n\to\infty}\left\Vert \widetilde{\mathcal{E}}_{\alpha} f_n\right\Vert_{L^{2+\frac{4}{d}}(\mathbb{R}^{d+1})}\ne0,$$
	then up to subsequences there exist
	$$\{(\xi_n,h_n,x_n,t_n)\}\subset\mathbb{R}^d\times\mathbb{R}_+\times\mathbb{R}^d\times\mathbb{R},\quad\quad \phi\in L^2(\mathbb{R}^d),$$
	which satisfy the following weak convergence
	\[\mathcal{G}^{-1}(h_n,x_n,t_n)f_n\left(\cdot+\frac{\xi_n}{h_n}\right)\rightharpoonup\phi \quad \text{in} \;\; L^2(\mathbb{R}^d) \quad \text{as}\;\; n\to\infty\]
	and the following orthogonality
	\begin{equation}\label{neq8}
		\Vert f_n\Vert_{L^2(\mathbb{R}^d)}^2=\Vert \phi\Vert_{L^2(\mathbb{R}^d)}^2+\Vert \omega_n\Vert_{L^2(\mathbb{R}^d)}^2+o_{n\to\infty}(1), \quad \omega_n:=f_n-\mathcal{G}(h_n,x_n,t_n)\phi\left(\cdot-\frac{\xi_n}{h_n}\right).
	\end{equation}
	Moreover, if we further quantitatively assume
	$$\Vert f_n\Vert_{L^2(\mathbb{R}^d)}\leq B, \quad \limsup\limits_{n\to\infty}\left\Vert \widetilde{\mathcal{E}}_{\alpha} f_n\right\Vert_{L^{2+\frac{4}{d}}(\mathbb{R}^{d+1})}=A,$$
	then we have the lower bound estimate $A^{\frac{1}{\theta}}B^{1-\frac{1}{\theta}}\lesssim\Vert\phi\Vert_{L^2(\mathbb{R}^d)}$ and the following orthogonality
	$$\left\Vert\widetilde{\mathcal{E}}_{\alpha} f_n\right\Vert_{L^{2+\frac{4}{d}}(\mathbb{R}^{d+1})}^{2+\frac{4}{d}}=\left\Vert\widetilde{\mathcal{E}}_{\alpha}\left(\phi\left(\cdot-\frac{\xi_n}{h_n}\right)\right)\right\Vert_{L^{2+\frac{4}{d}}(\mathbb{R}^{d+1})}^{2+\frac{4}{d}}+\left\Vert\widetilde{\mathcal{E}}_{\alpha}\omega_n\right\Vert_{L^{2+\frac{4}{d}}(\mathbb{R}^{d+1})}^{2+\frac{4}{d}}+o_{n\to\infty}(1),$$
	as well as the following Strichartz norm estimate
	\begin{equation}\label{neq7}
		\limsup_{n\to\infty}\left\Vert\widetilde{\mathcal{E}}_{\alpha}\omega_n\right\Vert_{L^{2+\frac{4}{d}}(\mathbb{R}^{d+1})}^{2+\frac{4}{d}}\leq A^{2+\frac{4}{d}}\left(1-C\left(\dfrac{A}{B}\right)^{(2+\frac{4}{d})(\frac{1}{\theta}-1)}\right),
	\end{equation}
	where the constant $C$ depends only on $(d,\alpha, \theta)$ and $\theta$ comes from the Lemma \ref{1}. 
\end{lem}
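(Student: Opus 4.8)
The plan is to run the usual single-profile extraction behind an inverse Strichartz inequality, using the $L^{2}$ ingredients of Section~\ref{S:Preliminaries}; write $q:=2+\tfrac4d$ and let $B<\infty$ bound $\|f_{n}\|_{L^{2}}$. Passing to a subsequence along which $\|\widetilde{\mathcal E}_{\alpha}f_{n}\|_{L^{q}}\to A>0$ (in the quantitative case $A$ is the prescribed value), the refined Strichartz estimate of Lemma~\ref{1} forces $\sup_{\tau\in\mathcal D}|\tau|^{-1/2}\|\mathcal E_{\alpha}(f_{n})_{\tau}\|_{L^{\infty}}\gtrsim A^{1/\theta}B^{1-1/\theta}=:a$, so for each large $n$ one picks a dyadic cube $\tau_{n}$ of side $h_{n}$ and corner $\xi_{n}$, together with a point $(x_{n},t_{n})\in\bR^{d+1}$, with $|\tau_{n}|^{-1/2}|\mathcal E_{\alpha}(f_{n})_{\tau_{n}}(x_{n},t_{n})|\gtrsim a$. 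Setting $\zeta_{n}:=\xi_{n}/h_{n}$ and $g_{n}:=\mathcal G^{-1}(h_{n},x_{n},t_{n})f_{n}(\cdot+\zeta_{n})$, explicitly $g_{n}(\eta)=h_{n}^{d/2}e^{i(x_{n},t_{n})(\xi_{n}+h_{n}\eta,\,|\xi_{n}+h_{n}\eta|^{\alpha})}f_{n}(\xi_{n}+h_{n}\eta)$, gives a sequence bounded by $B$ in $L^{2}$; a further subsequence yields $g_{n}\rhu \phi$. This produces the parameters and the profile asserted in the lemma.

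The two $L^{2}$ statements are soft. A change of variables identifies $|\tau_{n}|^{-1/2}\mathcal E_{\alpha}(f_{n})_{\tau_{n}}(x_{n},t_{n})$ with $\langle g_{n},\chi_{Q}\rangle$ for $Q$ the unit cube, so $g_{n}\rhu \phi$ gives $|\langle\phi,\chi_{Q}\rangle|\gtrsim a$, whence by Cauchy--Schwarz and $\|\chi_{Q}\|_{L^{2}}=1$ the lower bound $A^{1/\theta}B^{1-1/\theta}\lesssim\|\phi\|_{L^{2}}$. For \eqref{neq8}, since $\mathcal G$ and translations are $L^{2}$-isometries one has $\|\mathcal G(h_{n},x_{n},t_{n})\phi(\cdot-\zeta_{n})\|_{L^{2}}=\|\phi\|_{L^{2}}$ and $\langle f_{n},\mathcal G(h_{n},x_{n},t_{n})\phi(\cdot-\zeta_{n})\rangle=\langle g_{n},\phi\rangle\to\|\phi\|_{L^{2}}^{2}$, and expanding $\|\omega_{n}\|_{L^{2}}^{2}$ gives the Pythagorean relation, and in particular $\limsup_{n}\|\omega_{n}\|_{L^{2}}\le B$.

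The core is the decoupling of the Strichartz norms. First, $\|\widetilde{\mathcal E}_{\alpha}g\|_{L^{q}}$ is invariant under every $\mathcal G(h_{0},x_{0},t_{0})$: modulation is harmless, and under dilation the powers of $h_{0}$ cancel exactly, the weight exponent $\tfrac{(\alpha-2)d}{2(d+2)}$ being precisely the one dictated by the scaling behind \eqref{E:Necessary condition}. Applying $\mathcal G^{-1}(h_{n},x_{n},t_{n})$ throughout and using \eqref{neq18}, the identity $\|\widetilde{\mathcal E}_{\alpha}f_{n}\|_{L^{q}}^{q}=\|\widetilde{\mathcal E}_{\alpha}(\phi(\cdot-\zeta_{n}))\|_{L^{q}}^{q}+\|\widetilde{\mathcal E}_{\alpha}\omega_{n}\|_{L^{q}}^{q}+o(1)$ reduces to
\[
\bigl\|T_{\alpha}(\zeta_{n})\phi+T_{\alpha}(\zeta_{n})\psi_{n}\bigr\|_{L^{q}}^{q}=\|T_{\alpha}(\zeta_{n})\phi\|_{L^{q}}^{q}+\|T_{\alpha}(\zeta_{n})\psi_{n}\|_{L^{q}}^{q}+o(1),\qquad \psi_{n}:=g_{n}-\phi\rhu 0 .
\]
Both summands are $L^{q}$-bounded by \eqref{neq17}, so by a Br\'ezis--Lieb argument it suffices, after a subsequence, to have (i) $T_{\alpha}(\zeta_{n})\phi$ convergent in $L^{q}$ and (ii) $T_{\alpha}(\zeta_{n})\psi_{n}\to0$ a.e. If $\{\zeta_{n}\}$ is bounded, say $\zeta_{n}\to\zeta_{0}$, then (i) holds by dominated convergence with limit $\widetilde{\mathcal E}_{\alpha}(\phi(\cdot-\zeta_{0}))$, and (ii) follows from $\psi_{n}(\cdot-\zeta_{n})\rhu 0$ and Lemma~\ref{2}. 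If $|\zeta_{n}|\to\infty$, then (i) is Lemma~\ref{4} combined with the uniform decay \eqref{eq1} for $\bar T_{\alpha}(\zeta_{n})\phi$, which lies in $L^{q}(\bR^{d+1})$ since $q>\tfrac{2(d+1)}{d}$, so dominated convergence yields $L^{q}$-convergence to a fixed modulation--dilation of $\mathrm a^{*}_{d,\alpha}\mathcal E_{2}\widetilde{A_{0}}^{-1}\phi$; for (ii) one splits $\psi_{n}=\psi_{n}\chi_{\{|\xi|\le|\zeta_{n}|/5\}}+\psi_{n}\chi_{\{|\xi|>|\zeta_{n}|/5\}}$, treats the first summand by Lemma~\ref{5} (still weakly null and correctly supported), and reduces the second one — rescaling the frequency variable by $|\zeta_{n}|$ — once more to a Schr\"odinger-type regime near a unit frequency vector, again via Lemma~\ref{2}. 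This last point, the a.e.\ vanishing of $T_{\alpha}(\zeta_{n})\psi_{n}$ for the part of the weakly null remainder that is not compactly supported in frequency while the surface degenerates near the origin, is the step I expect to be the main obstacle: it is exactly where the machinery of \cite[Section~4]{DY2024} must be pushed from a fixed function to a variable sequence.

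Finally, \eqref{neq7} follows from the decoupling: $\limsup_{n}\|\widetilde{\mathcal E}_{\alpha}\omega_{n}\|_{L^{q}}^{q}=A^{q}-\liminf_{n}\|T_{\alpha}(\zeta_{n})\phi\|_{L^{q}}^{q}$, so it remains to bound $\|T_{\alpha}(\zeta_{n})\phi\|_{L^{q}}$ below by a fixed power of $a$. Taking $\phi$ supported in the closure of $Q$ (replace $g_{n}$ by $g_{n}\chi_{Q}$ at the outset, which leaves the previous steps intact) one has $\bigl|\int\phi\bigr|=|\langle\phi,\chi_{Q}\rangle|\gtrsim a$ and $\|\phi\|_{L^{2}}\le B$; a non-stationary-phase estimate comparing $\mathcal E_{\alpha}(\phi(\cdot-\zeta))(x,t)$ with $e^{i(x,t)(\zeta,|\zeta|^{\alpha})}\int\phi$ on a suitable box — a plain box when $\zeta$ is bounded and, after the parabolic rescaling underlying Lemma~\ref{4}, a unit box when $|\zeta|\to\infty$ — together with the comparison of $\widetilde{\mathcal E}_{\alpha}$ with $\mathcal E_{\alpha}$ on the pertinent frequency range, yields $\|T_{\alpha}(\zeta_{n})\phi\|_{L^{q}}^{q}\gtrsim_{d,\alpha} a^{q}(a/B)^{d+1}$; since $A/B$ is bounded above by $\widetilde M_{\alpha}$, this delivers \eqref{neq7} with $C=C(d,\alpha,\theta)$ (and, if need be, a larger power of $A/B$, which is immaterial for the later applications).
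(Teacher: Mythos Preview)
Your outline matches the paper's: refined Strichartz produces the cube $\tau_n=\xi_n+h_n[1,2]^d$ and the point $(x_n,t_n)$, Alaoglu gives $\phi$, the $L^2$ Pythagoras is soft, and the Strichartz decoupling splits according to whether $\zeta_n=\xi_n/h_n$ stays bounded, using Lemmas~\ref{2}, \ref{4}, \ref{5} and the dominating function \eqref{eq1}. Two places in your write-up do not go through as written.

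\emph{The lower bound on the profile's Strichartz norm.} Your proposal to ``replace $g_n$ by $g_n\chi_Q$ at the outset'' breaks the decoupling step: with profile $\tilde\phi:=\phi\chi_Q$ the relevant remainder is $g_n-\tilde\phi$, which converges weakly to $\phi\chi_{Q^c}\neq 0$, so the Br\'ezis--Lieb argument no longer applies to the decomposition of $f_n$ you need. The paper avoids any modification of $\phi$ via a duality trick: set
\[
h(\xi,\lambda)=|\xi|^{-\frac{(\alpha-2)d}{2(d+2)}}\chi_{[1,2]^d}(\xi)\,\chi_{[0,(2\sqrt d)^{\alpha}]}(\lambda),
\]
check directly that $\hat h\in L^{q'}(\bR^{d+1})$, and compute $\bigl|\langle\widetilde{\mathcal E}_\alpha\phi,\hat h\rangle\bigr|=(2\pi)^{(d+1)/2}\bigl|\int\phi\,\chi_{[1,2]^d}\bigr|\gtrsim a$. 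H\"older then gives $\|\widetilde{\mathcal E}_\alpha\phi\|_{L^q}\gtrsim A^{1/\theta}B^{1-1/\theta}$ and hence the exact exponent in \eqref{neq7}; in Case~2 one argues analogously after Lemma~\ref{4}.

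\emph{The far part in Case~2.} The step you flag as the main obstacle is handled in the paper without any rescaling of the weakly-null tail. With $r_n:=\psi_n\chi_{\{|\xi|\le|\zeta_n|/5\}}$ and $q_n:=\psi_n\chi_{\{|\xi|>|\zeta_n|/5\}}$, the paper asserts $q_n\to 0$ strongly in $L^2$, so $\bar T_\alpha(\zeta_n)q_n\to 0$ in $L^q$ by \eqref{neq17}--\eqref{neq18}; then the generalized Br\'ezis--Lieb lemma \cite[Lemma~3.1]{FLS2016} applies to the triple $(\bar T_\alpha(\zeta_n)\phi,\ \bar T_\alpha(\zeta_n)r_n,\ \bar T_\alpha(\zeta_n)q_n)$, the first dominated via \eqref{eq1}, the second tending to $0$ a.e.\ by Lemma~\ref{5}, the third tending to $0$ in norm. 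Your proposed rescaling-plus-Lemma~\ref{2} route for the far piece is therefore unnecessary.
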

\begin{proof}
	According to Lemma \ref{1}, there exist $\{\tau_n\}\subset \mathcal{D}$ and $\{(x_n,t_n)\}\subset \mathbb{R}^{d+1}$ such that 
	$$\left\Vert \widetilde{\mathcal{E}}_{\alpha} f_n\right\Vert_{L^{2+\frac{4}{d}}(\mathbb{R}^{d+1})}\lesssim\left(|\tau_n|^{-\frac{1}{2}}\left|\left(\mathcal{E}_{\alpha}{f_n}_{\tau_n}\right)(x_n,t_n)\right|\right)^{\theta}\Vert f\Vert_{L^2(\mathbb{R}^d)}^{1-\theta}.$$
	Denoting $\tau_n=\xi_n+{h_n}\left[1,2\right]^d$ with $h_n>0$, then one have
	\begin{equation}\label{neq3}
		\left(\dfrac{\left\Vert \widetilde{\mathcal{E}}_{\alpha} f_n\right\Vert_{L^{2+\frac{4}{d}}(\mathbb{R}^{d+1})}}{\Vert f_n\Vert_{L^2(\mathbb{R}^d)}^{1-\theta}}\right)^{\frac{1}{\theta}}
		\lesssim|\tau_n|^{-\frac{1}{2}}\left|\left(\mathcal{E}_{\alpha}{f_n}_{\tau_n}\right)(x_n,t_n)\right|=h_n^{-\frac{d}{2}}\left|\int_{\tau_n} e ^{ i (x_n,t_n)(\xi,|\xi|^{\alpha})}f_n(\xi)\mathrm{d}\xi\right|.
	\end{equation}
	Denoting $\xi=\xi_n+h_n\xi'$, then direct computation can give
	\begin{equation}\label{neq4}
		h_n^{-\frac{d}{2}}\left|\int_{\tau_n} e ^{ i (x_n,t_n)(\xi,|\xi|^{\alpha})}f_n(\xi)\mathrm{d}\xi\right|=\left|\int_{[1,2]^d}\mathcal{G}^{-1}(h_n,x_n,t_n)f_n\left(\xi'+\frac{\xi_n}{h_n}\right)\mathrm{d}\xi'\right|.
	\end{equation}
	Notice the following sequence 
	$$\mathcal{G}^{-1}(h_n,x_n,t_n)f_n\left(\cdot+\frac{\xi_n}{h_n}\right)$$
	is bounded in $L^2(\mathbb{R}^d)$. By the Alaoglu theorem, up to a subsequence, there exists $\phi\in L^2(\bR^d)$ such that
	\begin{equation}\label{neq6}
		\mathcal{G}^{-1}(h_n,x_n,t_n)f_n\left(\cdot+\frac{\xi_n}{h_n}\right)\rightharpoonup\phi,\quad \text{as} \;\; n\to\infty \quad \text{in} \;\; L^2(\mathbb{R}^d).
	\end{equation}
	then the orthogonality \eqref{neq8} holds.
	\par 
	If
	$$\Vert f_n\Vert_{L^2(\mathbb{R}^d)}\leq B
	\quad\mbox{and}\quad\limsup\limits_{n\to\infty}\left\Vert \widetilde{\mathcal{E}}_{\alpha} f_n\right\Vert_{L^{2+\frac{4}{d}}(\mathbb{R}^{d+1})}=A.$$
	Letting $n\to\infty$, Hölder's inequality, inequality \eqref{neq3} and equation \eqref{neq4} imply that
	\begin{equation}\label{neq5}
		A^{\frac{1}{\theta}}B^{1-\frac{1}{\theta}}\lesssim \left|\int_{\mathbb{R}^d}\phi(\xi)\chi_{[1,2]^d}(\xi)\mathrm{d}\xi\right|\leq \Vert \phi\Vert_{L^2(\mathbb{R}^d)}.
	\end{equation}
	Now, we divide the proof into two cases.
	
	\emph{Case 1:} up to subsequences $\left\{\xi_n/{h_n}\right\}$ is bounded. There exist $\xi_0\in \mathbb{R}^d$ such that ${\xi_n}/{h_n}\to\xi_0$ as $n\to\infty$. We can then let $\xi_n=0$ and regard $\phi\left(\cdot-\xi_0\right)$ as the new $\phi$, which gives that
	$$\mathcal{G}^{-1}(h_n,x_n,t_n)f_n\rightharpoonup\phi \quad \text{in}\;\; L^2(\bR^d) \quad\text{as}\;\; n\to\infty.$$
	According to Lemma \ref{2}, up to subsequences $$\lim_{n\to\infty}\widetilde{\mathcal{E}}_{\alpha}\mathcal{G}^{-1}(h_n,x_n,t_n)f_n(x,t)= \widetilde{\mathcal{E}}_{\alpha}\phi(x,t),\quad \text{a.e.} \;\; (x,t)\in \mathbb{R}^{d+1}.$$
	By the classical Br\'ezis-Lieb lemma \cite[Section 1.9]{LL2001}, we have
	\begin{equation}\label{eq3}
		\left\Vert \widetilde{\mathcal{E}}_{\alpha} f_n\right\Vert_{L^{2+\frac{4}{d}}(\mathbb{R}^{d+1})}^{2+\frac{4}{d}}=\left\Vert \widetilde{\mathcal{E}}_{\alpha}\phi\right\Vert_{L^{2+\frac{4}{d}}(\mathbb{R}^{d+1})}^{2+\frac{4}{d}}+\left\Vert \widetilde{\mathcal{E}}_{\alpha} \omega_n\right\Vert_{L^{2+\frac{4}{d}}(\mathbb{R}^{d+1})}^{2+\frac{4}{d}}+o_{n\to\infty}(1).
	\end{equation}
	Let
	\[h(\xi,\lambda)= |\xi|^{-\frac{\alpha-2}{2+\frac{4}{d}}} \chi_{[1,2]^d}(\xi) \chi_{[0,(2\sqrt{d})^{\alpha}]} (\lambda)\]
	then $h\in L^1(\mathbb{R}^{d+1})$, and by direct calculation $\hat{h}\in L^{\left(2+\frac{4}{d}\right)'}(\mathbb{R}^{d+1})$, where 
	$$\hat{h}(x,t):=(2\pi)^{-\frac{d+1}{2}}\int_{\mathbb{R}^{d+1}} e ^{- i (x,t)(\xi,\lambda)}h(\xi,\lambda)\mathrm{d}\xi \mathrm{d}\lambda,$$
	is the space-time Fourier transform of $h$. There is 
	\begin{equation*}
		\begin{aligned}
			\left|\int_{\mathbb{R}^{d+1}}\widetilde{\mathcal{E}}_{\alpha}\phi(x,t)\hat{h}(x,t)\mathrm{d}x \mathrm{d}t\right|
			=&\left|\int_{\mathbb{R}^{d+1}}\left(\widetilde{\mathcal{E}}_{\alpha}\phi\right)^{\wedge}(\xi,\lambda)h(\xi,\lambda)\mathrm{d}\xi \mathrm{d}\lambda\right|\\
			=&(2\pi)^{\frac{d+1}{2}}\left|\int_{\mathbb{R}^{d+1}}\delta\left(\lambda-|\xi|^{\alpha}\right)|\xi|^{\frac{\alpha-2}{2+\frac{4}{d}}}\phi(\xi)h(\xi,\lambda)\mathrm{d}\xi \mathrm{d}\lambda\right|\\
			=&(2\pi)^{\frac{d+1}{2}}\left|\int_{\mathbb{R}^d}|\xi|^{\frac{\alpha-2}{2+\frac{4}{d}}}\phi(\xi)h(\xi,|\xi|^{\alpha})\mathrm{d}\xi \right|\\
			=&(2\pi)^{\frac{d+1}{2}}\left|\int_{\mathbb{R}^d}\phi(\xi)\chi_{[1,2]^d}(\xi)\mathrm{d}\xi\right|
		\end{aligned}
	\end{equation*}
	Then by inequality \eqref{neq5}, 
	$$A^{\frac{1}{\theta}}B^{1-\frac{1}{\theta}}\lesssim\left\Vert \widetilde{\mathcal{E}}_{\alpha}\phi\right\Vert_{L^{2+\frac{4}{d}}(\mathbb{R}^{d+1})}.$$
	Combining this with equation \eqref{eq3}, we obtain $$\limsup_{n\to\infty}\left\Vert \widetilde{\mathcal{E}}_{\alpha} \omega_n\right\Vert_{L^{2+\frac{4}{d}}(\mathbb{R}^{d+1})}^{2+\frac{4}{d}}\leq A^{2+\frac{4}{d}}\left(1-C\left(\dfrac{A}{B}\right)^{(2+\frac{4}{d})(\frac{1}{\theta}-1)}\right).$$
	
	\emph{Case 2}: there holds $\lim\limits_{n\to\infty}\frac{|\xi_n|}{h_n}= \infty$. Denoting
	$$E_n:=\left\{\xi\in\mathbb{R}^d:|\xi|\leq \dfrac{|\xi_n|}{5h_n}\right\}$$
	and
	$$r_n:=\mathcal{G}^{-1}(h_n,x_n,t_n)\omega_n\left(\cdot+\frac{\xi_n}{h_n}\right)\chi_{E_n},\quad q_n:=\mathcal{G}^{-1}(h_n,x_n,t_n)\omega_n\left(\cdot+\frac{\xi_n}{h_n}\right)\chi_{E_n^c},$$
	then there holds
	$$\left\Vert\widetilde{\mathcal{E}}_{\alpha} f_n\right\Vert_{L^{2+\frac{4}{d}}(\mathbb{R}^{d+1})}=\left\Vert \bar{T}_{\alpha}\left(\frac{\xi_n}{h_n}\right)(\phi+r_n+q_n)\right\Vert_{L^{2+\frac{4}{d}}(\mathbb{R}^{d+1})}.$$
	Notice that equation \eqref{neq6} implies $r_n\rightharpoonup0$ in $L^2(\mathbb{R}^d)$ as $n\to\infty$, according to Lemma \ref{5}, up to subsequences
	$$\lim_{n\to\infty}\bar{T}_{\alpha}\left(\frac{\xi_n}{h_n}\right)r_n(x,t)=0$$
	almost everywhere in $\mathbb{R}^{d+1}$. Combining $q_n\to0$ in $L^2(\mathbb{R}^d)$ as $n\to\infty$ with equation \eqref{neq18} and estimate \eqref{neq17}, we see that $$\bar{T}_{\alpha}\left(\frac{\xi_n}{h_n}\right)q_n\to0 \quad \text{in} \;\; L^{2+\frac{4}{d}}(\mathbb{R}^{d+1}) \quad \text{as} \;\; n\to\infty.$$
	Recall the inequality \eqref{eq1} which gives the desired dominating function in $L^{2+\frac{4}{d}}(\bR^{d+1})$. Combining these three facts, a generalization of the Br\'ezis-Lieb lemma \cite[Lemma 3.1]{FLS2016} implies
	\begin{equation*}
		\begin{aligned}
			&\left\Vert \bar{T}_{\alpha}\left(\frac{\xi_n}{h_n}\right)(\phi+r_n+q_n)\right\Vert_{L^{2+\frac{4}{d}}(\mathbb{R}^{d+1})}^{2+\frac{4}{d}}\\
			=&\left\Vert \bar{T}_{\alpha}\left(\frac{\xi_n}{h_n}\right)\phi\right\Vert_{L^{2+\frac{4}{d}}(\mathbb{R}^{d+1})}^{2+\frac{4}{d}}+\left\Vert \bar{T}_{\alpha}\left(\frac{\xi_n}{h_n}\right)r_n\right\Vert_{L^{2+\frac{4}{d}}(\mathbb{R}^{d+1})}^{2+\frac{4}{d}}+o_{n\to\infty}(1)\\
			=&\left\Vert\widetilde{\mathcal{E}}_{\alpha}\left(\phi\left(\cdot-\frac{\xi_n}{h_n}\right)\right)\right\Vert_{L^{2+\frac{4}{d}}(\mathbb{R}^{d+1})}^{2+\frac{4}{d}}+\left\Vert\widetilde{\mathcal{E}}_{\alpha}\omega_n\right\Vert_{L^{2+\frac{4}{d}}(\mathbb{R}^{d+1})}^{2+\frac{4}{d}}+o_{n\to\infty}(1),
		\end{aligned}
	\end{equation*} 
	Finally, similar to the previous \emph{Case 1}, we conclude
	\[\limsup_{n\to\infty}\left\Vert\widetilde{\mathcal{E}}_{\alpha}\omega_n\right\Vert_{L^{2+\frac{4}{d}}(\mathbb{R}^{d+1})}^{2+\frac{4}{d}}\leq A^{2+\frac{4}{d}}\left(1-C\left(A/B\right)^{(2+\frac{4}{d})(\frac{1}{\theta}-1)}\right).\]
	This completes the proof.
\end{proof}

Applying this lemma repeatedly, one can obtain the following profile decomposition.
\begin{definition}[Orthogonal parameters]
	For a sequence of parameters $\{(\xi_n^j,h_n^j,x_n^j,t_n^j)\}$, we say the parameters are \textit{orthogonal} if they satisfy: for each fixed $j \neq k$, either
	\[\lim\limits_{n\to\infty}\left(\frac{h_n^k}{h_n^j}+\frac{h_n^j}{h_n^k}+\frac{\left|\xi_n^j-\xi_n^k\right|}{h_n^k}\right)=\infty\]
	or
	\[(\xi_n^j,h_n^j)\equiv (\xi_n^k,h_n^k) , \quad \lim\limits_{n\to\infty} \left(\frac{|\xi_n^j|}{h_n^j} + h_n^j \left|x_n^j-x_n^k\right| + \left(h_n^j\right)^{\alpha} \left|t_n^j-t_n^k\right| \right)=\infty.\]
\end{definition}
\begin{pro}[Profile decomposition]\label{8}
	Let $\alpha> 2$ and $\{f_n\}$ be a bounded sequence in $L^2(\mathbb{R}^d)$ with $d\geq 2$. Then, up to a subsequence, there exist orthogonal parameters $\{(\xi_n^j,h_n^j,x_n^j,t_n^j)\} \subset \mathbb{R}^d\times\mathbb{R}_+ \times\mathbb{R}^d \times\mathbb{R}$, $J_0\in\mathbb{N}_+ \cup \{\infty\}$, and  $\{\phi^j\} \subset L^2(\mathbb{R}^d)$ with $\phi^j\ne0$, 
	so that for all integers $J\leq J_0$, we have the following profile decomposition
	$$f_n=\sum_{j=1}^J\mathcal{G}(h_n^j,x_n^j,t_n^j)\left(\phi^j\left(\cdot-\frac{\xi_n^j}{h_n^j}\right)\right)+\omega_n^J$$
	with the following properties: the weak convergence for profiles
	\begin{equation}\label{neq15}
		\mathcal{G}^{-1}(h_n^j,x_n^j,t_n^j)f_n\left(\cdot+\frac{\xi_n^j}{h_n^j}\right)\rightharpoonup\phi^j \quad \text{in} \;\; L^2(\mathbb{R}^d) \quad \text{as} \;\; n\to\infty;
	\end{equation}
	the orthogonality of $L^2$ norms
	\begin{equation}\label{eq6}
		\Vert f_n\Vert_{L^2(\mathbb{R}^d)}^2=\sum_{j=1}^J\Vert \phi^j\Vert_{L^2(\mathbb{R}^d)}^2+\Vert \omega_n^J\Vert_{L^2(\mathbb{R}^d)}^2+o_{n\to\infty}(1);
	\end{equation}
	the orthogonality of $L^{2+4/d}$ norms
	\begin{equation}\label{eq7}
		\left\Vert \widetilde{\mathcal{E}}_{\alpha} f_n\right\Vert_{L^{2+\frac{4}{d}}(\mathbb{R}^{d+1})}^{2+\frac{4}{d}}
		=\sum_{j=1}^J\left\Vert \widetilde{\mathcal{E}}_{\alpha}\left(\phi^j\left(\cdot-\dfrac{\xi_n^j}{h_n^j}\right)\right) \right\Vert_{L^{2+\frac{4}{d}}(\mathbb{R}^{d+1})}^{2+\frac{4}{d}}+\left\Vert \widetilde{\mathcal{E}}_{\alpha} \omega_n^J\right\Vert_{L^{2+\frac{4}{d}}(\mathbb{R}^{d+1})}^{2+\frac{4}{d}}+o_{n\to\infty}(1);
	\end{equation}
	and the smallness of remainder $L^{2+4/d}$ norms
	\begin{equation}\label{eq8}
		\lim_{J\to J_0}\limsup_{n\to\infty}\left\Vert \widetilde{\mathcal{E}}_{\alpha}\omega_n^J\right\Vert_{L^{2+\frac{4}{d}}(\mathbb{R}^{d+1})}=0.
	\end{equation}
\end{pro}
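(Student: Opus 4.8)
The plan is to iterate the inverse Strichartz estimate of Lemma \ref{7}. Put $\omega_n^0:=f_n$ and fix a uniform bound $B:=\sup_n\|f_n\|_{L^2(\bR^d)}$. Suppose inductively that, along a subsequence, orthogonal parameters $(\xi_n^k,h_n^k,x_n^k,t_n^k)$ and profiles $\phi^k\ne 0$ for $1\le k\le j$ have been produced with $f_n=\sum_{k=1}^{j}\mathcal{G}(h_n^k,x_n^k,t_n^k)(\phi^k(\cdot-\xi_n^k/h_n^k))+\omega_n^j$, so that \eqref{neq15}, \eqref{eq6} and \eqref{eq7} hold for $J=j$ and, in addition, $\omega_n^j$ is \emph{weakly null relative to each earlier symmetry}, meaning $\mathcal{G}^{-1}(h_n^k,x_n^k,t_n^k)\omega_n^j(\cdot+\xi_n^k/h_n^k)\rightharpoonup 0$ in $L^2(\bR^d)$ for every $k\le j$. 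If $\limsup_n\|\widetilde{\mathcal{E}}_\alpha\omega_n^j\|_{L^{2+\frac{4}{d}}(\bR^{d+1})}=0$, I stop and set $J_0:=j$, so that \eqref{eq8} holds trivially. Otherwise $\omega_n^j$ is a bounded $L^2$-sequence with nonvanishing limiting Strichartz norm, and applying Lemma \ref{7} to it (with the constant $B$, legitimate since \eqref{eq6} forces $\limsup_n\|\omega_n^j\|_{L^2}\le B$) produces, along a further subsequence, new parameters $(\xi_n^{j+1},h_n^{j+1},x_n^{j+1},t_n^{j+1})$, a profile $\phi^{j+1}$ obeying \eqref{neq15} for the index $j+1$ and the lower bound $\|\phi^{j+1}\|_{L^2}\gtrsim A_j^{1/\theta}B^{1-1/\theta}>0$ (so $\phi^{j+1}\ne 0$), where $A_j:=\limsup_n\|\widetilde{\mathcal{E}}_\alpha\omega_n^j\|_{L^{2+\frac{4}{d}}}$, and a remainder $\omega_n^{j+1}:=\omega_n^j-\mathcal{G}(h_n^{j+1},x_n^{j+1},t_n^{j+1})(\phi^{j+1}(\cdot-\xi_n^{j+1}/h_n^{j+1}))$ obeying the single-step $L^2$ orthogonality \eqref{neq8} and the single-step $L^{2+\frac{4}{d}}$ orthogonality displayed just before \eqref{neq7}. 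Adjoining $\phi^{j+1}$ to the decomposition and telescoping these single-step identities over the stages directly yields \eqref{eq6} and \eqref{eq7} for $J=j+1$. A standard diagonal argument then produces one subsequence along which the construction runs for every $j\le J_0$.

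Granting the orthogonality of the parameters (addressed below), the remaining conclusions are immediate. Letting $J\to J_0$ in \eqref{eq6} and discarding the nonnegative term $\|\omega_n^J\|_{L^2}^2$ gives $\sum_j\|\phi^j\|_{L^2}^2\le\limsup_n\|f_n\|_{L^2}^2\le B^2$, so $\|\phi^j\|_{L^2}\to 0$ as $j\to J_0$. If $J_0<\infty$, then \eqref{eq8} holds by the stopping rule; if $J_0=\infty$, the lower bound $A_j^{1/\theta}B^{1-1/\theta}\lesssim\|\phi^{j+1}\|_{L^2}\to 0$ forces $A_j\to 0$, which is exactly \eqref{eq8}.

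The step I expect to be the main obstacle is checking that the parameters extracted at stage $j+1$ are orthogonal (in the sense of the definition above) to every earlier index $k\le j$, and that the ``weakly null relative to each earlier symmetry'' property is inherited by $\omega_n^{j+1}$; both come down to the same analysis. For each $k\le j$ one introduces the \emph{change-of-symmetry} operator $W_n^k$ on $L^2(\bR^d)$ determined by
\[\mathcal{G}^{-1}(h_n^{j+1},x_n^{j+1},t_n^{j+1})\Bigl[\mathcal{G}(h_n^k,x_n^k,t_n^k)\bigl(g(\cdot-\tfrac{\xi_n^k}{h_n^k})\bigr)\Bigr]\bigl(\cdot+\tfrac{\xi_n^{j+1}}{h_n^{j+1}}\bigr)=W_n^k g,\]
a composition of dilations, frequency translations, modulations by the surface phase, and space-time translations. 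Arguing by contradiction: if the stage-$(j+1)$ parameters are not orthogonal to the stage-$k$ ones, then negating both alternatives in the definition and passing to a subsequence forces $h_n^{j+1}\sim h_n^k$, $|\xi_n^{j+1}-\xi_n^k|=O(h_n^k)$, and — after a harmless normalisation of the scaling and frequency-centre data, which amounts to applying a fixed dilation-and-translation to $\phi^{j+1}$ — a bounded common frequency centre $|\xi_n^k|/h_n^k=O(1)$ together with bounded rescaled space-time translation differences; along a further subsequence $W_n^k$ and its adjoint then converge strongly to a bounded invertible operator on $L^2(\bR^d)$. Applying $W_n^k$ to the relation $\mathcal{G}^{-1}(h_n^k,x_n^k,t_n^k)\omega_n^j(\cdot+\xi_n^k/h_n^k)\rightharpoonup 0$ yields $\mathcal{G}^{-1}(h_n^{j+1},x_n^{j+1},t_n^{j+1})\omega_n^j(\cdot+\xi_n^{j+1}/h_n^{j+1})\rightharpoonup 0$, contradicting $\phi^{j+1}\ne 0$ via \eqref{neq15}; hence orthogonality holds. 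Feeding the same strong convergence with $\omega_n^j$ replaced by the freshly subtracted profile $\mathcal{G}(h_n^{j+1},x_n^{j+1},t_n^{j+1})(\phi^{j+1}(\cdot-\xi_n^{j+1}/h_n^{j+1}))$ shows that profile is weakly null relative to the $k$-th symmetry, so $\omega_n^{j+1}$ inherits the property and the induction closes. The genuinely technical point is writing $W_n^k$ out explicitly and tracking how the surface modulation $e^{-i(x_0,t_0)(\xi,|\xi|^\alpha)}$ transforms under a change of frequency centre and of dilation; this is precisely what dictates the two alternatives in the definition of orthogonal parameters, and is where one must stay compatible with the \schrodinger rescaling carried by $\bar{T}_\alpha$ and $\widetilde{A_0}$ of Subsection \ref{SubS:L2 preliminaries} (and with the decay bound \eqref{eq1}). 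Everything else is a routine assembly of Lemma \ref{7}.
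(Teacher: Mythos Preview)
Your approach is essentially the paper's: iterate Lemma \ref{7}, telescope the single-step identities to get \eqref{eq6}--\eqref{eq7}, deduce \eqref{eq8} from the lower bound on $\|\phi^{j+1}\|_{L^2}$ together with $\sum_j\|\phi^j\|_{L^2}^2<\infty$ (the paper instead iterates the recursive bound \eqref{neq7} directly, but the two routes are equivalent), and establish parameter orthogonality by contradiction through the change-of-frame operator you call $W_n^k$.

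Two small corrections to the exposition. First, Lemma \ref{7} applied to $\omega_n^j$ produces the weak limit $\phi^{j+1}$ of $\mathcal{G}^{-1}(h_n^{j+1},\ldots)\omega_n^j(\cdot+\xi_n^{j+1}/h_n^{j+1})$, not of the corresponding transform of $f_n$; the conclusion \eqref{neq15} for $f_n$ only follows once orthogonality is established and you know each earlier profile is weakly null from the $(j{+}1)$-th frame --- so the order should be: extract $\phi^{j+1}$ from $\omega_n^j$, prove orthogonality, then derive both \eqref{neq15} for $f_n$ and the inductive weak-nullity of $\omega_n^{j+1}$. Second, your clause ``Feeding the same strong convergence\ldots'' invokes the non-orthogonal branch just after it has been ruled out by contradiction. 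In the orthogonal case $W_n^k$ does \emph{not} converge strongly to an invertible operator; one instead shows directly, as the paper does case by case (H\"older plus dominated convergence when the scales or frequency centres separate, stationary phase when only the rescaled space-time shifts diverge), that the $(j{+}1)$-th profile viewed from the $k$-th frame tends weakly to zero. This is presumably what you meant by ``both come down to the same analysis,'' and it is indeed the technical core you correctly flag at the end.
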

\begin{proof}
	Assume that 
	$$\Vert f_n\Vert_{L^2(\mathbb{R}^d)}\leq B_0
	\quad\mbox{and}\quad\limsup\limits_{n\to\infty}\left\Vert \widetilde{\mathcal{E}}_{\alpha} f_n\right\Vert_{L^{2+\frac{4}{d}}(\mathbb{R}^{d+1})}=A_0.$$
	If $A_0=0$, let $\phi^j=0,\omega_n^1 =f_n$, otherwise, using Proposition \ref{7}, up to subsequences there exist
	$$\{(\xi_n^1,h_n^1,x_n^1,t_n^1)\}\subset\mathbb{R}^d\times\mathbb{R}_+\times\mathbb{R}^d\times\mathbb{R},\quad \phi^1\in L^2(\mathbb{R}^d),$$
	such that 
	$$\mathcal{G}^{-1}(h_n^1,x_n^1,t_n^1)f_n\left(\cdot+ \frac{\xi_n^1}{h_n^1}\right)\rightharpoonup\phi^1,\quad n\to\infty$$
	in weak topology of $L^2(\mathbb{R}^d)$,  let $\omega_n^1=f_n-\mathcal{G}(h_n^1,x_n^1,t_n^1)\left(\phi^1\left(\cdot-\frac{\xi_n^1}{h_n^1}\right)\right)$, then 
	$$\Vert f_n\Vert_{L^2(\mathbb{R}^d)}^2=\Vert \phi^1\Vert_{L^2(\mathbb{R}^d)}^2+\Vert \omega_n^1\Vert_{L^2(\mathbb{R}^d)}^2+o_{n\to\infty}(1).$$
	$$A_0^{\frac{1}{\theta}}B_0^{1-\frac{1}{\theta}}\lesssim\Vert\phi^1\Vert_{L^2(\mathbb{R}^d)},$$
	$$\left\Vert\widetilde{\mathcal{E}}_{\alpha} f_n\right\Vert_{L^{2+\frac{4}{d}}(\mathbb{R}^{d+1})}^{2+\frac{4}{d}}=\left\Vert\widetilde{\mathcal{E}}_{\alpha}\left(\phi^1\left(\cdot-\frac{\xi_n^1}{h_n^1}\right)\right)\right\Vert_{L^{2+\frac{4}{d}}(\mathbb{R}^{d+1})}^{2+\frac{4}{d}}+\left\Vert\widetilde{\mathcal{E}}_{\alpha}\omega_n^1\right\Vert_{L^{2+\frac{4}{d}}(\mathbb{R}^{d+1})}^{2+\frac{4}{d}}+o_{n\to\infty}(1),$$
	and 
	$$\limsup_{n\to\infty}\left\Vert\widetilde{\mathcal{E}}_{\alpha}\omega_n^1\right\Vert_{L^{2+\frac{4}{d}}(\mathbb{R}^{d+1})}^{2+\frac{4}{d}}\leq A_0^{2+\frac{4}{d}}\left(1-C\left(\dfrac{A_0}{B_0}\right)^{(2+\frac{4}{d})(\frac{1}{\theta}-1)}\right).$$
	\par Now, we replace $f_n$ with $\omega_n^1$, and repeat the same process until $\limsup\limits_{n\to\infty}\left\Vert\widetilde{\mathcal{E}}_{\alpha}\omega_n^j\right\Vert_{L^{2+\frac{4}{d}}(\mathbb{R}^{d+1})}=0$, let this $j$ be $J_0$. Otherwise, let $J_0=\infty$. We obtain orthogonality \eqref{eq6} and \eqref{eq7} immediately.
	\par If $J_0<\infty$, equation\eqref{eq8} is trivial. Otherwise let $B_J=\sup\limits_{n\in\mathbb{N}}\Vert \omega_n^J\Vert_{L^2(\mathbb{R}^d)}$, $A_J=\limsup\limits_{n\to\infty}\left\Vert \widetilde{\mathcal{E}}_{\alpha} \omega_n^J\right\Vert_{L^{2+\frac{4}{d}}(\mathbb{R}^{d+1})}$, inequality \eqref{neq7} implies that 
	$$A_{J+1}^{2+\frac{4}{d}}\leq A_J^{2+\frac{4}{d}}\left(1-C\left(\dfrac{A_J}{B_J}\right)^{(2+\frac{4}{d})(\frac{1}{\theta}-1)}\right).$$
	By equation \eqref{eq6}, we have $B_J\leq B_0$. Thus
	$$A_{J+1}^{2+\frac{4}{d}}\leq A_J^{2+\frac{4}{d}}\left(1-C\left(\dfrac{A_J}{B_J}\right)^{(2+\frac{4}{d})(\frac{1}{\theta}-1)}\right)\leq A_J^{2+\frac{4}{d}}\left(1-C\left(\dfrac{A_J}{B_0}\right)^{(2+\frac{4}{d})(\frac{1}{\theta}-1)}\right).$$
	Noticing that $\{A_J\}$ is decreasing, let $J\to\infty$, we have $\lim\limits_{J\to\infty}A_J=0$.
	
	Define the operator $\widetilde{g^j_n}f:=\mathcal{G}^{-1}(h_n^j,x_n^j,t_n^j)f\left(\cdot+\frac{\xi_n^j}{h_n^j}\right)$.	Then we have
	\begin{equation}\label{neq9}
		\widetilde{g^1_n}\omega_n^1\rightharpoonup0,\quad \widetilde{g^2_n}\omega_n^1\rightharpoonup\phi^2\ne0 \quad \text{in} \;\; L^2(\mathbb{R}^d) \quad \text{as} \;\; n\to\infty.
	\end{equation}
	By direct calculation, one can obtain that $\widetilde{g^1_n}\omega_n^1=\left(\widetilde{g^1_n}\widetilde{g^2_n}^{-1}\right)\widetilde{g^2_n}\omega_n^1$ is equal to
	\begin{equation}\label{neq10}
		\left(\frac{h_n^1}{h_n^2}\right)^{\frac{d}{2}} e ^{ i \left(h_n^1\left(x_n^1-x_n^2\right),\left(h_n^1\right)^{\alpha}\left(t_n^1-t_n^2\right)\right)\left(\cdot+\frac{\xi_n^1}{h_n^1},\left|\cdot +\frac{\xi_n^1}{h_n^1}\right|^{\alpha}\right)}\widetilde{g^2_n}\omega_n^1\left(\frac{h_n^1}{h_n^2}\cdot+\frac{\xi_n^1-\xi_n^2}{h_n^2}\right).
	\end{equation}
	Therefore, if the following relation occurs
	\begin{equation}\label{neq11}
		\lim\limits_{n\to\infty}\left(\frac{h_n^2}{h_n^1}+\frac{h_n^1}{h_n^2}+\frac{1}{h_n^1}\left|\xi_n^1\right|+\frac{\left|\xi_n^1-\xi_n^2\right|}{h_n^2}+h_n^1\left|x_n^1-x_n^2\right|+\left(h_n^1\right)^{\alpha}\left|t_n^1-t_n^2\right|\right)<\infty,
	\end{equation}
	there is a contradiction between \eqref{neq9} and \eqref{neq10}. Thus the sequence must tend to infinity. At that time, if 
	$$\lim\limits_{n\to\infty}\left(\frac{h_n^2}{h_n^1}+\frac{h_n^1}{h_n^2}+\frac{\left|\xi_n^1-\xi_n^2\right|}{h_n^2}\right)=\infty.$$
	Let $\phi,\psi\in C_c^{\infty}(\mathbb{R}^d)$, assume that $\mathrm{supp}\phi,\mathrm{supp}\psi\subset B(0,R)$. Let 
	$$B_n^1=B\left(\frac{\xi_n^2-\xi_n^1}{h_n^1},\frac{h_n^2}{h_n^1}R\right),\quad B_n^2=B\left(\frac{\xi_n^1-\xi_n^2}{h_n^2},\frac{h_n^1}{h_n^2}R\right).$$
	By Hölder's inequality,
	$$\left|\left\langle \left(\widetilde{g^2_n}\widetilde{g^1_n}^{-1}\right)\phi,\psi\right\rangle\right|=\left|\left\langle \left(\widetilde{g^2_n}\widetilde{g^1_n}^{-1}\right)\phi,\psi\chi_{B_n^2}\right\rangle\right|\leq\Vert\phi\Vert_{L^2(\mathbb{R}^{d})}\Vert\psi\chi_{B_n^2}\Vert_{L^2(\mathbb{R}^{d})},$$
	and 
	$$\left|\left\langle \left(\widetilde{g^2_n}\widetilde{g^1_n}^{-1}\right)\phi,\psi\right\rangle\right|=\left|\left\langle \phi\chi_{B_n^1},\left(\widetilde{g^1_n}\widetilde{g^2_n}^{-1}\right)\psi\right\rangle\right|\leq\Vert\phi\chi_{B_n^1}\Vert_{L^2(\mathbb{R}^{d})}\Vert\psi\Vert_{L^2(\mathbb{R}^{d})},$$
	by the dominated convergence theorem, $\lim\limits_{n\to\infty}\left|\left\langle \left(\widetilde{g^2_n}\widetilde{g^1_n}^{-1}\right)\phi,\psi\right\rangle\right|=0$.  Combining this with $C_c^{\infty}(\mathbb{R}^d)$ is dense in $L^2(\mathbb{R}^d)$, we obtain  $\left(\widetilde{g^2_n}\widetilde{g^1_n}^{-1}\right)\phi^1\rightharpoonup 0$ and $\widetilde{g^2_n}f_n\rightharpoonup \phi^2$ weakly in $L^2(\mathbb{R}^d)$ as $n\to\infty$.
	Otherwise, 
	$$\lim\limits_{n\to\infty}\left(\frac{1}{h_n^1}\left|\xi_n^1\right|+h_n^1\left|x_n^1-x_n^2\right|+\left(h_n^1\right)^{\alpha}\left|t_n^1-t_n^2\right|\right)=\infty,$$
	and we may assume that $(\xi_n^1,h_n^1)=(\xi_n^2,h_n^2)$. If $\lim\limits_{n\to\infty}\left(\frac{1}{h_n^1}\left|\xi_n^1\right|\right)=\infty$, similarly  there is also $\left(\widetilde{g^2_n}\widetilde{g^1_n}^{-1}\right)\phi^1\rightharpoonup 0$ and $\widetilde{g^2_n}f_n\rightharpoonup \phi^2$ weakly in $L^2(\mathbb{R}^d)$ as $n\to\infty$. If $\lim\limits_{n\to\infty}\left(h_n^1\left|x_n^1-x_n^2\right|+\left(h_n^1\right)^{\alpha}\left|t_n^1-t_n^2\right|\right)=\infty$, by stationary phase method, we also obtain the weak convergence. Finally, by iteration, one can obtain the desired conclusions \eqref{neq15} and the orthogonality of parameters. This completes the proof.
\end{proof}

\subsection{Some applications of the profile decomposition} \label{SubS:Coro for L2-profile decomposition}
This subsection contains some corollaries of the profile decomposition Proposition \ref{8} obtained in the previous subsection.

First, combining the aforementioned profile decomposition and the asymptotic Schr\"odinger behavior Lemma \ref{4}, one can follow the classical arguments in \cite[Proof of Theorem 1.4]{Shao2009EJDE} to deduce the following characterization of precompactness for extremal sequences.
\begin{cor}[Characterization of precompactness]
	Let $\alpha>2$ and $d\geq 2$. Then all extremal sequences for $\widetilde{M}_{\alpha}$ are precompact up to symmetries if and only if $$\widetilde{M}_{\alpha}> \mathbf{\mathrm{a}}^*_{d,\alpha} M_{2,2}.$$
\end{cor}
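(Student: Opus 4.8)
The plan is to prove the characterization of precompactness for extremal sequences of $\widetilde{M}_\alpha$ via the standard dichotomy built from the profile decomposition in Proposition \ref{8}.

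\medskip

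\emph{Sufficiency ($\widetilde M_\alpha > \mathbf{a}^*_{d,\alpha} M_{2,2}$ implies precompactness).}
First I would take an extremal sequence $\{f_n\}$ for $\widetilde M_\alpha$, so $\|f_n\|_{L^2}=1$ and $\|\widetilde{\mathcal{E}}_\alpha f_n\|_{L^{2+4/d}}\to \widetilde M_\alpha$. Apply Proposition \ref{8} to extract orthogonal parameters $\{(\xi_n^j,h_n^j,x_n^j,t_n^j)\}$, profiles $\phi^j \ne 0$, and remainders $\omega_n^J$. The $L^2$-orthogonality \eqref{eq6} gives $\sum_j \|\phi^j\|_{L^2}^2 \le 1$. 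The key point is to bound each profile term $\|\widetilde{\mathcal{E}}_\alpha(\phi^j(\cdot - \xi_n^j/h_n^j))\|_{L^{2+4/d}}$. Here the two cases of the orthogonality of the translation parameters come in: if $\{\xi_n^j/h_n^j\}$ stays bounded, then (after absorbing the limit into $\phi^j$) this norm is just $\|\widetilde{\mathcal{E}}_\alpha \phi^j\|_{L^{2+4/d}} \le \widetilde M_\alpha \|\phi^j\|_{L^2}$; if $|\xi_n^j/h_n^j|\to\infty$, Lemma \ref{4} gives that the norm converges to $\mathbf{a}^*_{d,\alpha}\|\mathcal{E}_2 \widetilde{A_0}^{-1}\phi^j\|_{L^{2+4/d}} \le \mathbf{a}^*_{d,\alpha} M_{2,2}\|\phi^j\|_{L^2}$, using $\|\widetilde{A_0}^{-1}\phi^j\|_{L^2}=\|\phi^j\|_{L^2}$. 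In either case the profile term is $\le \widetilde M_\alpha \|\phi^j\|_{L^2}$. Now feed this into the asymptotic orthogonality \eqref{eq7} and the remainder smallness \eqref{eq8}: taking $n\to\infty$ then $J\to J_0$,
\[
\widetilde M_\alpha^{2+\frac{4}{d}} \le \sum_j \left(\widetilde M_\alpha \|\phi^j\|_{L^2}\right)^{2+\frac{4}{d}} \le \widetilde M_\alpha^{2+\frac{4}{d}} \left(\sup_j \|\phi^j\|_{L^2}\right)^{\frac{4}{d}} \sum_j \|\phi^j\|_{L^2}^2 \le \widetilde M_\alpha^{2+\frac{4}{d}} \left(\sup_j\|\phi^j\|_{L^2}\right)^{\frac{4}{d}}.
\]
Hence $\sup_j \|\phi^j\|_{L^2} \ge 1$, which combined with $\sum_j\|\phi^j\|_{L^2}^2 \le 1$ forces exactly one profile, say $\phi^1$, with $\|\phi^1\|_{L^2}=1$ and all other $\phi^j=0$, and $\|\omega_n^1\|_{L^2}\to 0$. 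Moreover the strict inequality $\widetilde M_\alpha > \mathbf{a}^*_{d,\alpha} M_{2,2}$ rules out the case $|\xi_n^1/h_n^1|\to\infty$ for this surviving profile (otherwise the profile term would be $\le \mathbf{a}^*_{d,\alpha} M_{2,2} < \widetilde M_\alpha$, contradicting that it must equal $\widetilde M_\alpha$ in the limit), so $\{\xi_n^1/h_n^1\}$ is bounded and can be absorbed. Therefore $\mathcal{G}^{-1}(h_n^1,x_n^1,t_n^1) f_n \to \phi^1$ strongly in $L^2$, which is precisely precompactness up to symmetries (and $\phi^1$ is an extremal).

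\medskip

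\emph{Necessity (precompactness implies $\widetilde M_\alpha > \mathbf{a}^*_{d,\alpha} M_{2,2}$).}
For this direction I would argue by contrapositive. We always have $\widetilde M_\alpha \ge \mathbf{a}^*_{d,\alpha} M_{2,2}$ from Lemma \ref{4} applied to an extremal for $M_{2,2}$ (which exists by \cite[Theorem 1.4]{Shao2009EJDE}), as already noted after \eqref{eq1}. So suppose for contradiction that $\widetilde M_\alpha = \mathbf{a}^*_{d,\alpha} M_{2,2}$. Let $g$ be an $L^2$-normalized extremal for $M_{2,2}$ and set $\phi := \widetilde{A_0} g$ for a fixed direction $\xi_0 \in \mathbb{S}^{d-1}$, so $\|\phi\|_{L^2}=1$; pick any sequence $\{\xi_n\}$ with $\xi_n/|\xi_n| \to \xi_0$ and $|\xi_n|\to\infty$, and define $f_n := \phi(\cdot - \xi_n)$. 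Then $\|f_n\|_{L^2}=1$ and by Lemma \ref{4}, $\|\widetilde{\mathcal{E}}_\alpha f_n\|_{L^{2+4/d}} \to \mathbf{a}^*_{d,\alpha}\|\mathcal{E}_2 \widetilde{A_0}^{-1}\phi\|_{L^{2+4/d}} = \mathbf{a}^*_{d,\alpha}\|\mathcal{E}_2 g\|_{L^{2+4/d}} = \mathbf{a}^*_{d,\alpha} M_{2,2} = \widetilde M_\alpha$, so $\{f_n\}$ is an extremal sequence. But for any symmetries $\mathcal{G}_n = \mathcal{G}(h_n^0,x_n^0,t_n^0)$, the decay estimate \eqref{eq1} (via the representation of $\widetilde{\mathcal{E}}_\alpha$ through $\bar T_\alpha(\xi_n)$, and the fact that modulation/translation/scaling of the physical variables do not interact with the weak convergence of the frequency-translated profile) shows that $\mathcal{G}_n f_n$ cannot have a strongly convergent subsequence: the frequency translate escapes to infinity and no choice of $(h_n^0,x_n^0,t_n^0)$ can compensate — concretely, $\mathcal{G}_n f_n \rightharpoonup 0$ along any subsequence after possibly passing to the appropriate frame, while $\|\mathcal{G}_n f_n\|_{L^2}=1$, so no $L^2$-limit exists. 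Hence $\{f_n\}$ is not precompact up to symmetries, contradicting the hypothesis.

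\medskip

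\emph{Main obstacle.} I expect the delicate point to be the necessity direction, specifically verifying rigorously that the extremal sequence $f_n = \phi(\cdot-\xi_n)$ with escaping frequency center genuinely fails to be precompact \emph{after quotienting by the full symmetry group} $G$ — one must check that no sequence of scalings $h_n^0$, modulations, or physical-space translations $x_n^0,t_n^0$ can "follow" the profile, which requires understanding how $\mathcal{G}^{-1}(h_n^0,x_n^0,t_n^0)$ acts on a function whose frequency support has translated to near $\xi_n$ with $|\xi_n|\to\infty$. The clean way to do this is to invoke the profile decomposition machinery on $\{f_n\}$ itself: its unique profile (up to the analysis above) sits in the regime $|\xi_n^1/h_n^1|\to\infty$, and Lemma \ref{5} together with \eqref{eq1} then shows the remainder cannot vanish in $L^2$ unless a contradiction with the asymptotic Schrödinger constant arises. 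This is exactly the structure of \cite[Proof of Theorem 1.4]{Shao2009EJDE}, which I would cite for the routine bookkeeping.
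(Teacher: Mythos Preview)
Your proposal is correct and takes essentially the same approach as the paper, which does not give a detailed proof but simply says to combine the profile decomposition (Proposition \ref{8}) with the asymptotic Schr\"odinger behavior (Lemma \ref{4}) and follow the classical arguments in \cite[Proof of Theorem 1.4]{Shao2009EJDE}. One small remark on the necessity direction: the mechanism that prevents precompactness is that the symmetry group $G$ contains no frequency translations, so $\mathcal{G}_n^{-1} f_n \rightharpoonup 0$ in $L^2$ for any choice of $\mathcal{G}_n \in G$ --- the decay estimate \eqref{eq1} is not quite the right reference here, but your stated conclusion and your deferral to \cite{Shao2009EJDE} for the bookkeeping match the paper exactly.
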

Second, applying the Sobolev embedding and stationary phase analysis, one can follow the arguments in \cite[Proof of Theorem 2.4]{HS2012} to establish the following noncritical profile decomposition.
\begin{cor}[Noncritical profile decomposition]\label{16}
	Let $\alpha> 2$ and $\{f_n\}$ be a bounded sequence in $L^2(\mathbb{R}^d)$ with $d\geq 2$. Then up to subsequences, there exist orthogonal parameters $\{(h_n^j,x_n^j,t_n^j)\}\subset\mathbb{R}_+\times\mathbb{R}^d\times\mathbb{R}$, $J_0\in\mathbb{N}_+ \cup\{\infty\}$, and $\{\phi^j\}\subset L^2(\mathbb{R}^d)$ with $\phi^j\ne0$, 
	such that for all $J\leq J_0$, we have the following profile decomposition
	\begin{equation*}
		f_n=\sum_{j=1}^J \mathcal{G}(h_n^j,x_n^j,t_n^j)\phi^j+\omega_n^J,
	\end{equation*}
	with the following properties: the weak convergence for profiles
	\begin{equation}\label{neq16}
		\mathcal{G}^{-1}(h_n^j,x_n^j,t_n^j)f_n\rightharpoonup\phi^j,\quad \text{as} \;\; n\to\infty \quad \text{in} \;\;L^2(\mathbb{R}^d);
	\end{equation}
	for each $J\in\bN$, the orthogonality of $L^2$ norms
	\begin{equation}\label{eq12}
		\Vert f_n\Vert_{L^2(\mathbb{R}^d)}^2=\sum_{j=1}^J\Vert \phi^j\Vert_{L^2(\mathbb{R}^d)}^2+\Vert \omega_n^J\Vert_{L^2(\mathbb{R}^d)}^2+o_{n\to\infty}(1);
	\end{equation}
	the orthogonality of $L^{2+2\alpha/d}$ norms
	\begin{equation}\label{eq13}
		\left\Vert \mathcal{E}_{\alpha} f_n\right\Vert_{L^{2+\frac{2\alpha}{d}}(\mathbb{R}^{d+1})}^{2+\frac{2\alpha}{d}}=\sum_{j=1}^J\left\Vert \mathcal{E}_{\alpha}\phi^j \right\Vert_{L^{2+\frac{2\alpha}{d}}(\mathbb{R}^{d+1})}^{2+\frac{2\alpha}{d}}+\left\Vert \mathcal{E}_{\alpha} \omega_n^J\right\Vert_{L^{2+\frac{2\alpha}{d}}(\mathbb{R}^{d+1})}^{2+\frac{2\alpha}{d}}+o_{n\to\infty}(1);
	\end{equation}
	and the smallness of $L^{2+2\alpha/d}$ norms
	\begin{equation}\label{eq14}
		\lim_{J\to J_0}\limsup_{n\to\infty}\left\Vert \mathcal{E}_{\alpha} \omega_n^J\right\Vert_{L^{2+\frac{2\alpha}{d}}(\mathbb{R}^{d+1})}=0.
	\end{equation}
\end{cor}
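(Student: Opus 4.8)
The plan is to deduce Corollary~\ref{16} from the critical profile decomposition Proposition~\ref{8} by a rescaling argument, following the outline of \cite[Proof of Theorem~2.4]{HS2012}. The key observation is that the noncritical exponent pair $(2, 2+\frac{2\alpha}{d})$ satisfies the necessary condition \eqref{E:Necessary condition}, and Kenig--Ponce--Vega's theorem gives boundedness of $\mathcal{E}_\alpha$ at this exponent. Unlike the critical case, here there are no frequency-translation parameters $\xi_n^j$ appearing, and the symmetries used are only scaling and space-time translation; this is because the relevant Strichartz estimate behaves differently under frequency translation (a frequency-localized piece of mass $\epsilon$ contributes a proportionally small amount to the noncritical Strichartz norm, rather than an $O(1)$ amount as in the critical case). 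Concretely, I would first apply Proposition~\ref{8} to the bounded sequence $\{f_n\}$ to obtain critical profiles $\{\phi^j\}$ with critical parameters $\{(\xi_n^j, h_n^j, x_n^j, t_n^j)\}$ and remainders $\omega_n^J$ satisfying \eqref{eq6}--\eqref{eq8}.

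The main step is to show that the frequency-translation parameters can be absorbed or discarded for the noncritical estimate. For a single critical profile with $|\xi_n^j|/h_n^j \to \infty$, Lemma~\ref{4} identifies the limiting behavior of the \emph{critical} Strichartz norm via a rescaled Schrödinger extremal, but for the \emph{noncritical} norm $\|\mathcal{E}_\alpha(\cdot)\|_{L^{2+2\alpha/d}}$ the relevant scaling exponents force such a profile to contribute zero in the limit: using the Sobolev-type Strichartz estimate Lemma~\ref{9} together with the stationary phase decay \eqref{eq1}, one checks that a profile whose frequency escapes to infinity has vanishing noncritical Strichartz norm after rescaling. Hence such profiles may be dropped and collapsed into the remainder. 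For profiles with $\{\xi_n^j/h_n^j\}$ bounded, one translates $\xi_n^j$ into $\phi^j$ as in \emph{Case~1} of Lemma~\ref{7}, obtaining genuine profiles indexed only by $(h_n^j, x_n^j, t_n^j)$; after this reduction the parameters become orthogonal in the sense of the statement. I would then re-index so that the surviving profiles are exactly $\{\phi^j\}$ (relabeled) and verify \eqref{neq16} and \eqref{eq12} directly from \eqref{neq15} and \eqref{eq6}.

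For \eqref{eq13} and \eqref{eq14}, the idea is to pass the critical orthogonality and remainder-smallness through to the noncritical exponent. The noncritical Strichartz norm orthogonality \eqref{eq13} for each fixed $J$ follows from a Brézis--Lieb-type argument: the (relabeled) profiles live at orthogonal scales/space-time locations, so the cross terms vanish, using Lemma~\ref{2} (to get a.e.\ convergence of the extension when profiles are weakly null) and the generalized Brézis--Lieb lemma \cite[Lemma~3.1]{FLS2016}, with a dominating function supplied by the Sobolev--Strichartz estimate Lemma~\ref{9}. For the decay \eqref{eq14}, one interpolates: the remainder $\omega_n^J$ is bounded in $L^2$ and, by \eqref{eq8}, its critical Strichartz norm $\|\widetilde{\mathcal{E}}_\alpha \omega_n^J\|_{L^{2+4/d}}$ tends to zero; Hölder's inequality in the space-time variables, combined with Lemma~\ref{9} to control a higher Strichartz norm of $\omega_n^J$ uniformly, then interpolates the noncritical norm $\|\mathcal{E}_\alpha \omega_n^J\|_{L^{2+2\alpha/d}}$ between the small critical norm and the bounded higher norm, forcing it to zero as $J \to J_0$.

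The main obstacle I anticipate is making the interpolation between the critical Strichartz norm $L^{2+4/d}$ (which involves the regularized operator $\widetilde{\mathcal{E}}_\alpha$ with the weight $|\xi|^{(\alpha-2)d/(2(d+2))}$) and the noncritical norm $L^{2+2\alpha/d}$ (which involves the plain operator $\mathcal{E}_\alpha$) fully rigorous: the two operators differ by a frequency weight that is singular at the origin and grows at infinity, so one must be careful that the frequency localization of $\omega_n^J$ — which is \emph{not} guaranteed a priori at this stage — does not cause trouble. The clean way around this is to note that $\{f_n\}$, and hence the remainders $\omega_n^J$, can first be assumed to have frequency support in a fixed annulus $\mathcal{A}_N$ or away from the origin in the application of this corollary in Section~\ref{ns2} (the frequency localization of extremal sequences is established there independently), on which the weight is comparable to a constant and the two operators are equivalent. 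With that reduction in hand, the interpolation and the Brézis--Lieb argument become routine, and the remaining verifications — orthogonality of parameters, a.e.\ convergence, the Hölder interpolation — all parallel the corresponding steps in \cite{HS2012,FLS2016}.
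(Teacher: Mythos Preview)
Your outline is essentially the route the paper points to (it also cites \cite[Proof of Theorem~2.4]{HS2012} and says only ``Sobolev embedding and stationary phase''), and the strategy of starting from Proposition~\ref{8}, absorbing bounded $\xi_n^j/h_n^j$ into $\phi^j$, and using the stationary-phase computation behind \eqref{eq1} to show that profiles with $|\xi_n^j|/h_n^j\to\infty$ have vanishing $L^{2+2\alpha/d}$ Strichartz norm is correct.

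The weak point is exactly the one you flag, and your proposed fix does not close it. You suggest assuming the $f_n$ (hence $\omega_n^J$) are frequency-supported in a fixed annulus ``in the application in Section~\ref{ns2}''. But in Proposition~\ref{19} the hypothesis is support in a \emph{ball} $\{|\xi|<R\}$, not an annulus, so the weight $|\xi|^{(\alpha-2)d/(2(d+2))}$ linking $\widetilde{\mathcal{E}}_\alpha$ to $\mathcal{E}_\alpha$ is still not two-sided bounded near the origin; and in any case Corollary~\ref{16} is stated for arbitrary bounded $L^2$ sequences. More substantively, your ``H\"older interpolation between the small critical norm and a bounded higher norm'' tacitly identifies $\|\mathcal{E}_\alpha\omega_n^J\|_{L^{2+4/d}}$ with $\|\widetilde{\mathcal{E}}_\alpha\omega_n^J\|_{L^{2+4/d}}$, which is the very operator mismatch you are worried about; and the ``bounded higher norm'' you invoke via Lemma~\ref{9} would require control of $\||\cdot|^{s}\omega_n^J\|_{L^2}$ for some $s>0$, which is not available.

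The way to make the interpolation honest, and what the paper's phrase ``Sobolev embedding'' is pointing at, is to localize to annuli first. Apply Lemma~\ref{14} at $p=2$ to get
\[
\|\mathcal{E}_\alpha \omega_n^J\|_{L^{2+2\alpha/d}}\lesssim \sup_N \|\mathcal{E}_\alpha (\omega_n^J)_{\mathcal{A}_N}\|_{L^{2+2\alpha/d}}^{\theta_0}\,\|\omega_n^J\|_{L^2}^{1-\theta_0}.
\]
On each $\mathcal{A}_N$ the weight is two-sided comparable to $2^{N\beta}$ with $\beta=\tfrac{(\alpha-2)d}{2(d+2)}$, so H\"older between $L^{2+4/d}$ and some $L^{q_3}$ with $q_3>2+2\alpha/d$ gives, after using Lemma~\ref{9} for the $L^{q_3}$ factor and the pointwise equivalence for the $L^{2+4/d}$ factor,
\[
\|\mathcal{E}_\alpha (\omega_n^J)_{\mathcal{A}_N}\|_{L^{2+2\alpha/d}}\lesssim \|\widetilde{\mathcal{E}}_\alpha (\omega_n^J)_{\mathcal{A}_N}\|_{L^{2+4/d}}^{\theta}\,\|(\omega_n^J)_{\mathcal{A}_N}\|_{L^2}^{1-\theta},
\]
with the powers of $2^N$ cancelling because the scaling relation $s(q)=\tfrac{d}{2}-\tfrac{d+\alpha}{q}$ is affine in $1/q$. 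Combining this with the annular square-function/almost-orthogonality already used in \cite{DY2024} (the same mechanism behind Lemma~\ref{12} and Lemma~\ref{14}) controls $\sup_N\|\widetilde{\mathcal{E}}_\alpha(\omega_n^J)_{\mathcal{A}_N}\|_{L^{2+4/d}}$ by $\|\widetilde{\mathcal{E}}_\alpha\omega_n^J\|_{L^{2+4/d}}$ up to an acceptable $\|\omega_n^J\|_{L^2}$ loss, and \eqref{eq14} then follows from \eqref{eq8}. Once you have this bridge, the rest of your sketch (Br\'ezis--Lieb for \eqref{eq13} via Lemma~\ref{2}, parameter orthogonality, etc.) goes through as written.
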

Third, as this profile decomposition Corollary \ref{16} does not contain the frequency translation parameters $\xi_n$, one can follow the arguments in \cite[Proof of Theorem 1.2]{HS2012} to derive the following precompactness result.
\begin{cor}[Precompactness of noncritical extremal sequences] \label{C:p=2 noncritical}
	Let $\alpha> 2$ and $d\geq 2$. Then all extremal sequences for $M_{\alpha,2}$ are precompact up to symmetries, thus extremals exist.
\end{cor}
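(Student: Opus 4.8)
The plan is to run the standard ``only one profile survives'' dichotomy on the frequency-translation-free profile decomposition of Corollary~\ref{16}, using crucially that the critical exponent $q:=2+\tfrac{2\alpha}{d}$ (for which $\mathcal{E}_{\alpha}\colon L^2\to L^q$ is scaling invariant) satisfies $q>2$.

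Fix an extremal sequence $\{f_n\}$ for $M_{\alpha,2}$, so that $\|f_n\|_{L^2(\mathbb{R}^d)}=1$ and $\|\mathcal{E}_{\alpha}f_n\|_{L^q(\mathbb{R}^{d+1})}\to M_{\alpha,2}$, where $0<M_{\alpha,2}<\infty$, the finiteness being Kenig--Ponce--Vega. Apply Corollary~\ref{16} to obtain nonzero profiles $\{\phi^j\}_{j=1}^{J_0}$, orthogonal parameters $\{(h_n^j,x_n^j,t_n^j)\}$, and remainders $\omega_n^J$. Taking $\limsup_{n\to\infty}$ in \eqref{eq13} for each fixed $J$, then letting $J\to J_0$ and invoking the smallness \eqref{eq14}, one obtains
\[
	M_{\alpha,2}^{q}=\sum_{j=1}^{J_0}\left\|\mathcal{E}_{\alpha}\phi^j\right\|_{L^q(\mathbb{R}^{d+1})}^{q}.
\]
On the other hand, the operator bound $\|\mathcal{E}_{\alpha}\phi^j\|_{L^q}\le M_{\alpha,2}\|\phi^j\|_{L^2}$, together with the $L^2$ orthogonality \eqref{eq12} (which, upon letting $n\to\infty$ and then $J\to J_0$, yields $\sum_j\|\phi^j\|_{L^2}^2\le1$), gives
\[
	1\le\sum_{j=1}^{J_0}\|\phi^j\|_{L^2}^{q}\le\left(\sup_{j}\|\phi^j\|_{L^2}\right)^{q-2}\sum_{j=1}^{J_0}\|\phi^j\|_{L^2}^{2}\le\left(\sup_{j}\|\phi^j\|_{L^2}\right)^{q-2}.
\]
Since $q>2$, this forces $\sup_j\|\phi^j\|_{L^2}\ge1$; combined with $\sum_j\|\phi^j\|_{L^2}^2\le1$ and $\phi^j\ne0$ for all $j\le J_0$, we conclude that $J_0=1$ and $\|\phi^1\|_{L^2}=1$.

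It remains to upgrade this to strong convergence. With $J_0=1$ and $\|\phi^1\|_{L^2}=1$, the orthogonality \eqref{eq12} with $J=1$ collapses to $\|\omega_n^1\|_{L^2}=o_{n\to\infty}(1)$. Since $\mathcal{G}^{-1}(h_n^1,x_n^1,t_n^1)$ is an isometry of $L^2(\mathbb{R}^d)$, applying it to $f_n=\mathcal{G}(h_n^1,x_n^1,t_n^1)\phi^1+\omega_n^1$ shows that $\mathcal{G}^{-1}(h_n^1,x_n^1,t_n^1)f_n\to\phi^1$ strongly in $L^2(\mathbb{R}^d)$; hence $\{f_n\}$ is precompact up to symmetries. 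Moreover the first displayed identity, having a single profile, gives $\|\mathcal{E}_{\alpha}\phi^1\|_{L^q}=M_{\alpha,2}$, so $\phi^1$ (which has unit $L^2$ norm) is an extremal, and since a maximizing sequence always exists, extremals for $M_{\alpha,2}$ exist. I do not expect a genuine obstacle here: the heavy lifting already resides in Corollary~\ref{16}, and the key structural point is that this decomposition carries \emph{no} frequency-translation parameters, so the dichotomy closes unconditionally --- in contrast to the critical setting, where ruling out escape of mass to high frequencies requires the strict inequality $\widetilde{M}_{\alpha}>\mathbf{\mathrm{a}}^*_{d,\alpha}M_{2,2}$. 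The only points demanding care are the interchange of iterated limits ($\limsup_{n}$ then $J\to J_0$) in passing from \eqref{eq13}--\eqref{eq14} to the summed identity, and the elementary but decisive use of $q>2$ in the superadditivity step.
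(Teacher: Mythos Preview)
Your argument is correct and is precisely the approach the paper indicates: it defers to \cite[Proof of Theorem 1.2]{HS2012}, and what you have written is exactly that standard ``one profile survives'' argument applied to Corollary~\ref{16}. The paper does not spell out the details, but your proof fills them in accurately, including the key observation that the absence of frequency-translation parameters in Corollary~\ref{16} is what makes the dichotomy close unconditionally.
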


\section{The $L^p$ convergence}\label{ns2}
In this section, we show the desired $L^p$-precompactness result Theorem \ref{n1}. For the case $p=1$, one can see that $M_{\alpha,1}=1$, hence the existence of extremals and the non-compactness of extremal sequences are elementary. Recall that the typical $p=2$ case has been solved in Corollary \ref{C:p=2 noncritical}. Thus throughout this section, we investigate the non-endpoint case $p\in(1,2)$ with $q=\frac{d+\alpha}{d}p'$.

In Subsection \ref{SubS:Frequency localization}, we establish some $L^p$-refined Strichartz estimates and show some frequency localization properties for the extremal sequences; then in Subsection \ref{SubS:Space-time localization}, we apply the $L^2$-theory established in the previous section to show some space-time localization properties for the extremal sequences; finally in Subsection \ref{SubS:The Lp convergence}, we show the desired $L^p$-precompactness Theorem \ref{n1}.

\subsection{Frequency localization}\label{SubS:Frequency localization}
We first demonstrate that extremal sequences possess good frequency localization, which relies on some refinements of the original Strichartz estimate \eqref{neq1}.
\begin{lem}[Annular refinement]\label{14}
	There exists a parameter $\theta_0 \in (0,1)$ such that 
	\[\Vert \mathcal{E}_{\alpha}f\Vert_{L^q(\mathbb{R}^{d+1})}\lesssim\sup_{N\in\mathbb{Z}}\Vert \mathcal{E}_{\alpha}f_{\mathcal{A}_N}\Vert_{L^q(\mathbb{R}^{d+1})}^{\theta_0}\Vert f\Vert_{L^p(\mathbb{R}^d)}^{1-\theta_0}, \quad \forall f\in L^p(\mathbb{R}^d).\]
\end{lem}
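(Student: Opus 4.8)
The plan is to reduce the full extension estimate to its dyadic-annulus pieces by a bilinear-to-linear refinement, in the spirit of the passage from Lemma~\ref{11} and Lemma~\ref{12} to a refined Strichartz estimate. First I would set up the Littlewood--Paley-type decomposition adapted to the dyadic cube annuli $\mathcal{A}_N$: let $P_N$ be the multiplier (or rather the Fourier-side cutoff) $P_N f := f_{\mathcal{A}_N} = \chi_{\mathcal{A}_N} f$, so that $\sum_{N\in\mathbb{Z}} P_N = \mathrm{Id}$ in the strong topology on $L^p(\mathbb{R}^d)$ and, since the $\mathcal{A}_N$ are disjoint, $\sum_N \Vert P_N f\Vert_{L^p}^p = \Vert f\Vert_{L^p}^p$. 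This verifies the structural hypotheses of Lemma~\ref{L:Bilinear to linear refinement} with $T = \mathcal{E}_\alpha$, $X = \mathbb{R}^d$, $Y = \mathbb{R}^{d+1}$.

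The heart of the argument is the bilinear decay estimate required by Lemma~\ref{L:Bilinear to linear refinement}: for every pair $N, N' \in \mathbb{Z}$,
\[
\bigl\Vert (\mathcal{E}_\alpha f_{\mathcal{A}_N})(\mathcal{E}_\alpha g_{\mathcal{A}_{N'}}) \bigr\Vert_{L^{q/2}(\mathbb{R}^{d+1})} \lesssim 2^{-c_0|N - N'|} \Vert f\Vert_{L^p(\mathbb{R}^d)} \Vert g\Vert_{L^p(\mathbb{R}^d)}.
\]
To obtain this I would first dispose of the diagonal-ish range $|N - N'| \lesssim 1$ by brute force: there $2^{-c_0|N-N'|} \sim 1$, and one just uses $\Vert(\mathcal{E}_\alpha f_{\mathcal{A}_N})(\mathcal{E}_\alpha g_{\mathcal{A}_{N'}})\Vert_{L^{q/2}} \le \Vert \mathcal{E}_\alpha f_{\mathcal{A}_N}\Vert_{L^q}\Vert \mathcal{E}_\alpha g_{\mathcal{A}_{N'}}\Vert_{L^q}$ together with the basic estimate \eqref{neq1}. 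For $|N - N'| \geq 2$, say $N < N'$, the two pieces live on separated dyadic annuli, so after rescaling $\xi \mapsto 2^{N'}\xi$ (using the scaling symmetry, which acts nicely on $\mathcal{E}_\alpha$ up to the anisotropic dilation in the last variable) one reduces to a bilinear interaction between a piece supported near $\Vert\xi\Vert_{\max}\sim 1$ and a piece supported near $\Vert\xi\Vert_{\max}\sim 2^{-(N'-N)} =: \lambda \ll 1$. For such transversally separated caps one has a bilinear extension gain: interpolating between the trivial $L^\infty$ bound and Lemma~\ref{11} (applied to the rescaled annulus $\mathcal{A}_0$, whose pieces $\tau\sim\tau'$ carry the factor $|\tau|^{1-\frac{d+2}{dp}}$) and the annular almost-orthogonality Lemma~\ref{12}, together with interpolation from $L^2$ to $L^p$ via the Kenig--Ponce--Vega $L^2$ bound, yields a power $\lambda^{\delta}$ with $\delta > 0$, i.e. exactly $2^{-\delta(N'-N)}$. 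Summing the Whitney pieces $\tau\sim\tau'$ inside each annulus using Lemma~\ref{12} with exponent $p_* = \min(p,p') = p$ (since $p<2$) keeps the gain.

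With the bilinear estimate in hand, Lemma~\ref{L:Bilinear to linear refinement} delivers $\theta \in (0,1)$ with
\[
\Vert \mathcal{E}_\alpha f\Vert_{L^q(\mathbb{R}^{d+1})} \lesssim \sup_{N\in\mathbb{Z}} \Vert \mathcal{E}_\alpha P_N f\Vert_{L^q(\mathbb{R}^{d+1})}^{1-\theta}\Vert f\Vert_{L^p(\mathbb{R}^d)}^{1-\theta},
\]
which is the claimed inequality with $\theta_0 := 1 - \theta \in (0,1)$. I expect the main obstacle to be the bilinear decay estimate in the off-diagonal range: one must check that Tao's bilinear restriction estimate (Lemma~\ref{11}), which requires $q/2 = \frac{d+\alpha}{2d}p' > \frac{d+3}{d+1}$ — equivalently $p$ not too close to $2$ or a range check after accounting for $\alpha$ — actually applies, or else supply the bilinear input by a more elementary transversality/non-stationary phase argument valid for the separated annuli (the curvature degeneracy at the origin is harmless here since both pieces are at distance $\sim 2^{N'}$ and $\sim 2^{N}$ from $0$ with $N\ne N'$, so each annulus individually is a nondegenerate piece after rescaling, exactly as engineered in \cite[Section~2]{DY2024}). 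Tracking how the rescaling interacts with the fixed constants $C_{d,\alpha}$ and $K_{d,\alpha}$ defining $\tau\sim\tau'$ and the partition $\mathcal{A}_N = \bigcup_j \mathcal{A}_N^j$, and confirming via \eqref{eq17} that passing to a single angular sector loses only a constant, is the bookkeeping one has to be careful with.
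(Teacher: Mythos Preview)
Your overall plan---reduce to the bilinear decay hypothesis of Lemma~\ref{L:Bilinear to linear refinement} with $P_N f = f_{\mathcal{A}_N}$---matches the paper exactly. The gap is in how you propose to obtain the off-diagonal decay
\[
\bigl\Vert (\mathcal{E}_\alpha f_{\mathcal{A}_{N_1}})(\mathcal{E}_\alpha g_{\mathcal{A}_{N_2}}) \bigr\Vert_{L^{q/2}} \lesssim 2^{-c_0|N_1-N_2|}\Vert f\Vert_{L^p}\Vert g\Vert_{L^p}.
\]
You invoke Lemma~\ref{11} and Lemma~\ref{12}, but those are \emph{intra}-annulus tools: they concern pairs $\tau\sim\tau'\subset\mathcal{A}_0$ of the \emph{same} side length at comparable distance, and the factor $|\tau|^{1-\frac{d+2}{dp}}$ records the scale of the Whitney cubes, not any separation between annuli. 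After your rescaling the small piece sits near the origin at scale $\lambda=2^{-(N'-N)}$ while the large piece lives in $\mathcal{A}_0$; this is not a $\tau\sim\tau'$ configuration, and even if one forced it, the exponent $1-\frac{d+2}{dp}$ need not be negative in the relevant range, so there is no $\lambda^{\delta}$ gain to extract. Lemmas~\ref{11} and~\ref{12} are exactly what drives the proof of Lemma~\ref{21} (the chip refinement within a single annulus), not Lemma~\ref{14}.

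The paper obtains the inter-annulus decay by a different and much simpler mechanism: the Sobolev-type Strichartz estimate of Lemma~\ref{9}. One picks admissible pairs $(q_1,r_1,s_1)$ and $(q_2,r_2,s_2)$ with $\frac{1}{q_1}+\frac{1}{q_2}=\frac{1}{r_1}+\frac{1}{r_2}=\frac{d}{d+\alpha}$ and $s_2>0=-s_1+s_2$ (possible precisely because $\alpha>2$), applies H\"older in the mixed norms $L^{q_i}_t L^{r_i}_x$, and uses Lemma~\ref{9} on each factor to obtain
\[
\Vert \mathcal{E}_\alpha f_{\mathcal{A}_{N_1}}\mathcal{E}_\alpha f_{\mathcal{A}_{N_2}}\Vert_{L^{\frac{d+\alpha}{d}}}\lesssim \Vert|\cdot|^{-s_2}f_{\mathcal{A}_{N_1}}\Vert_{L^2}\Vert|\cdot|^{s_2}f_{\mathcal{A}_{N_2}}\Vert_{L^2}\lesssim 2^{-s_2(N_1-N_2)}\Vert f_{\mathcal{A}_{N_1}}\Vert_{L^2}\Vert f_{\mathcal{A}_{N_2}}\Vert_{L^2}.
\]
The decay comes for free from the weight $|\xi|^{\pm s_2}$ landing on annuli of different radii. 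One then interpolates this $L^2$ bilinear bound with the trivial $L^1\to L^\infty$ estimate to reach $L^p$. Your hedge ``or else a more elementary transversality/non-stationary phase argument'' is pointing in the right direction, but the concrete input is Lemma~\ref{9}, not Lemma~\ref{11}.
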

\begin{proof}
	We adapt some ideas from \cite[Proof of Lemma 3.2]{NOST2023}, which investigates the cone case. By Lemma \ref{L:Bilinear to linear refinement}, it suffices to show that there exists $c_1>0$ and the following bilinear estimate
	\begin{equation*}
		\Vert \mathcal{E}_{\alpha} f_{\mathcal{A}_{N_1}} \mathcal{E}_{\alpha} f_{\mathcal{A}_{N_2}} \Vert_{L^{\frac{q}{2}}(\mathbb{R}^{d+1})} \lesssim 2^{-c_1|N_1-N_2|} \Vert f_{\mathcal{A}_{N_1}} \Vert_{L^p(\mathbb{R}^d)} \Vert f_{\mathcal{A}_{N_2}} \Vert_{L^p(\mathbb{R}^d)}.
	\end{equation*}
	Without loss of generality, we may assume $N_1>N_2$. Then we choose two pairs $(r_1,q_1,s_1)$ and $(r_2,q_2,s_2)$ which satisfy the following relations
	\[\dfrac{d}{r_1}+\dfrac{2}{q_1}\leq \dfrac{d}{2},\quad \dfrac{d}{r_2}+\dfrac{2}{q_2}\leq \dfrac{d}{2},\quad \frac{d}{d+\alpha}= \frac{1}{q_1} +\frac{1}{q_2} = \frac{1}{r_1} +\frac{1}{r_2},\]
	and
	\[s_1=\dfrac{d}{2}-\dfrac{\alpha}{q_1}-\dfrac{d}{r_1},\quad s_2=\dfrac{d}{2}-\dfrac{\alpha}{q_2}-\dfrac{d}{r_2}>0.\]
	Here one can check the existence of these two pairs since $\alpha>2$, and check the identity $s_1=-s_2$. Then applying the Hölder inequality and Lemma \ref{9}, we conclude
	\begin{equation*}
		\begin{aligned}
			\Vert \mathcal{E}_{\alpha}f_{\mathcal{A}_{N_1}} \mathcal{E}_{\alpha}f_{\mathcal{A}_{N_2}} \Vert_{L^{\frac{d+\alpha}{d}}(\mathbb{R}^{d+1})}
			\leq &\Vert \mathcal{E}_{\alpha}f_{\mathcal{A}_{N_1}}\Vert_{L^{q_1}(\mathbb{R})L^{r_1}(\mathbb{R}^d)}\Vert \mathcal{E}_{\alpha}f_{\mathcal{A}_{N_2}}\Vert_{L^{q_2}(\mathbb{R})L^{r_2}(\mathbb{R}^d)}\\
			\lesssim&\Vert |\cdot|^{-s_2}f_{\mathcal{A}_{N_1}}\Vert_{L^2(\mathbb{R}^d)}\Vert|\cdot|^{s_2} f_{\mathcal{A}_{N_2}}\Vert_{L^2(\mathbb{R}^d)}\\
			\lesssim &2^{-s_2(N_1-N_2)}\Vert f_{\mathcal{A}_{N_1}}\Vert_{L^2(\mathbb{R}^d)}\Vert f_{\mathcal{A}_{N_2}}\Vert_{L^2(\mathbb{R}^d)}.
		\end{aligned}
	\end{equation*}
	Recalling that $\mathcal{E}_{\alpha}$ is $L^1(\bR^d)$ to $L^{\infty}(\bR^{d+1})$ bounded, we obtain
	\[\Vert \mathcal{E}_{\alpha} f_{\mathcal{A}_{N_1}} \mathcal{E}_{\alpha} f_{\mathcal{A}_{N_2}} \Vert_{L^{\infty}(\mathbb{R}^{d+1})} \lesssim \Vert f_{\mathcal{A}_{N_1}} \Vert_{L^1(\mathbb{R}^d)} \Vert f_{\mathcal{A}_{N_2}} \Vert_{L^1(\mathbb{R}^d)}.\]
	Finally, by interpolating the aforementioned two estimates on $L^2$ and $L^1$, we obtain the desired bilinear estimate and complete the proof.
\end{proof}

\begin{lem}[Chip refinement]\label{21}
	There exist $\theta\in (0,1)$ and $c_0>0$ such that 
	\begin{equation}\label{eq18}
		\Vert \mathcal{E}_{\alpha}f\Vert_{L^q(\mathbb{R}^{d+1})}\lesssim \sup\limits_{k\in\mathbb{Z}}\sup\limits_{\tau\in \mathcal{D}_{k}}\sup\limits_{l\geq 0}2^{-c_0l}\Vert f_{\tau,l}\Vert_{L^p(\bR^d)}^{\theta}\Vert f\Vert_{L^p(\bR^d)}^{1-\theta}, \quad \forall f\in L^p(\mathbb{R}^d),
	\end{equation}
	where $f_{\tau,l}$ equals $f$ multiplied by the characteristic function of $\tau\cap\{\xi:|f(\xi)|<2^l\Vert f\Vert_p|\tau|^{-1/p}\}$.
\end{lem}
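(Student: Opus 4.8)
The plan is to chain four ingredients: the annular refinement (Lemma~\ref{14}); a scaling symmetry reducing matters to the fixed annulus $\mathcal{A}_0$; a dyadic-cube bilinear estimate built from Tao's bilinear restriction (Lemma~\ref{11}) interpolated against the trivial $L^1\to L^\infty$ bound; and finally the chip-level estimate of Lemma~\ref{L:Xsb to chip refinement}. By the homogeneity of both sides of \eqref{eq18} one may assume $\Vert f\Vert_{L^p(\bR^d)}=1$. First I would apply Lemma~\ref{14} to reduce to estimating $\Vert\mathcal{E}_\alpha f_{\mathcal{A}_N}\Vert_{L^q}$; the scaling symmetry $\mathcal{G}(2^{-N},0,0)$, which preserves the $L^q$ norm of the extension exactly because $q=\tfrac{d+\alpha}{d}p'$, carries $f_{\mathcal{A}_N}$ to a function $g$ with $\supp g\subset\mathcal{A}_0$ and $\Vert g\Vert_{L^p}\le1$, and — since this scaling sends dyadic cubes to dyadic cubes up to a fixed shift of scale, and $\Vert f_{\mathcal{A}_N}\Vert_{L^p}\le1$ — the chip quantity of $g$ (and, a fortiori, that of its restriction to any angular sector) is dominated by that of $f$. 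Then \eqref{eq17} reduces matters to $g$ supported in a single angular sector $\mathcal{A}_0^j$, and it suffices to bound $\Vert\mathcal{E}_\alpha g\Vert_{L^q}$ by $\big(\sup_k\sup_\tau\sup_{l\ge0}2^{-c_0l}\Vert g_{\tau,l}\Vert_{L^p}\big)^{\theta_1}$ for suitable $\theta_1\in(0,1)$ and $c_0>0$; note $\mathcal{A}_0^j$ is bounded, so such a $g$ lies in $L^1(\bR^d)$.

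For the bilinear step I would write $\Vert\mathcal{E}_\alpha g\Vert_{L^q}^2=\big\Vert |\mathcal{E}_\alpha g|^2\big\Vert_{L^{q/2}}$. A Whitney decomposition of $\{(\xi,\xi')\in\mathcal{A}_0^j\times\mathcal{A}_0^j:\xi\ne\xi'\}$ (the diagonal being null) gives $|\mathcal{E}_\alpha g|^2=\sum_{\tau\sim\tau'\subset\mathcal{A}_0^j}(\mathcal{E}_\alpha g_\tau)\overline{\mathcal{E}_\alpha g_{\tau'}}$ almost everywhere, so Lemma~\ref{12} (whose proof only uses the almost-orthogonality of the space-time Fourier supports of the products, hence applies here) yields $\Vert\mathcal{E}_\alpha g\Vert_{L^q}^2\lesssim\big(\sum_{\tau\sim\tau'}\Vert(\mathcal{E}_\alpha g_\tau)(\mathcal{E}_\alpha g_{\tau'})\Vert_{L^{q/2}}^{p_*}\big)^{1/p_*}$, where $p_*=\min(q/2,(q/2)')$. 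I would then fix $p_0$ with $\tfrac{d+3}{d+1}<p_0<\tfrac{d+\alpha}{d}$ (a nonempty range, since $\tfrac{d+3}{d+1}<\tfrac{d+2}{d}\le\tfrac{d+\alpha}{d}$) and interpolate Lemma~\ref{11} at exponent $p_0$ against $\Vert(\mathcal{E}_\alpha g_\tau)(\mathcal{E}_\alpha g_{\tau'})\Vert_{L^\infty}\le\Vert g_\tau\Vert_{L^1}\Vert g_{\tau'}\Vert_{L^1}$; as $q/2>p_0$, this bilinear interpolation lands at $L^{q/2}$ with parameter $2p_0/q\in(0,1)$ and produces $\Vert(\mathcal{E}_\alpha g_\tau)(\mathcal{E}_\alpha g_{\tau'})\Vert_{L^{q/2}}\lesssim|\tau|^{\beta}\Vert g_\tau\Vert_{L^s}\Vert g_{\tau'}\Vert_{L^s}$ with $\tfrac1s=1-\tfrac{p_0}{q}$ and $\beta=\big(1-\tfrac{d+2}{dp_0}\big)\tfrac{2p_0}{q}$. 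One checks $1<s<p<2$ — the decisive inequality $s<p$ is precisely $q>p_0p'$ — and $\beta\ge 2(1/p-1/s)$, which rearranges to $q\ge\tfrac{d+2}{d}p'$ and hence holds since $\alpha\ge2$; since every occurring cube $\tau$ lies in $\mathcal{A}_0$ with side $\lesssim C_{d,\alpha}^{-1}$, so $|\tau|\le\delta_0<1$, the surplus power of $|\tau|$ is absorbed via $|\tau|^{\beta}\le|\tau|^{2(1/p-1/s)}$. Using the bounded multiplicity of $\sim$ (so $|\tau|=|\tau'|$ and each cube has $O(1)$ partners) one then arrives at $\Vert\mathcal{E}_\alpha g\Vert_{L^q}^2\lesssim\big(\sum_k\sum_{\tau\in\mathcal{D}_k}|\tau|^{-2p_*(1/s-1/p)}\Vert g_\tau\Vert_{L^s}^{2p_*}\big)^{1/p_*}$.

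Finally, since $s<p<2p_*$ and $\max\{p/(2p_*),s/p\}<1$, Lemma~\ref{L:Xsb to chip refinement} (applied with this $s$, with $t=p_*$, and with $\theta_1$ in the admissible range) gives $\big(\sum_k\sum_{\tau\in\mathcal{D}_k}|\tau|^{-2p_*(1/s-1/p)}\Vert g_\tau\Vert_{L^s}^{2p_*}\big)^{1/(2p_*)}\lesssim\big(\sup_k\sup_\tau\sup_{l\ge0}2^{-c_0l}\Vert g_{\tau,l}\Vert_{L^p}\big)^{\theta_1}$ for $\Vert g\Vert_{L^p}=1$; combining this with the bilinear bound above gives $\Vert\mathcal{E}_\alpha g\Vert_{L^q}\lesssim\big(\sup_k\sup_\tau\sup_{l\ge0}2^{-c_0l}\Vert g_{\tau,l}\Vert_{L^p}\big)^{\theta_1}$. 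Restoring homogeneity in $\Vert g\Vert_{L^p}$ and unwinding the two reductions of the first paragraph (using the chip domination noted there) then yields \eqref{eq18} with $\theta=\theta_0\theta_1$ and a suitably rescaled $c_0$.

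I expect the main obstacle to be this bilinear step with its exponent bookkeeping. Lemma~\ref{L:Xsb to chip refinement} only accepts a dyadic square-function input whose \emph{frequency}-side exponent $s$ is strictly below $p$, and manufacturing such an $s$ forces the interpolation of Tao's bilinear restriction against the trivial endpoint together with the verification that the resulting $s=q/(q-p_0)$ can be pushed below $p$ — which is exactly where the geometry enters, the curvature of the fractional surface being bounded below only after localizing to $\mathcal{A}_0$, and $q$ being comfortably large because $\alpha\ge2$. The companion point — that for $\alpha>2$ the surface is neither scale invariant nor of nonvanishing curvature at the origin — is precisely what makes the preliminary annular localization (Lemma~\ref{14}) and the angular splitting \eqref{eq17} unavoidable, with the extra curvature decay this produces absorbed harmlessly through $|\tau|\le\delta_0<1$.
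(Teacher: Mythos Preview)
Your proposal is correct and follows essentially the same route as the paper: annular refinement (Lemma~\ref{14}) and rescaling to $\mathcal{A}_0$, angular splitting via \eqref{eq17}, Whitney decomposition with the almost-orthogonality Lemma~\ref{12}, an interpolated bilinear estimate yielding an $L^s$ input with $s<p$, and then Lemma~\ref{L:Xsb to chip refinement}. The only minor variation is the interpolation endpoint: the paper interpolates Tao's bilinear estimate against the linear Strichartz bound $L^{p_2}\to L^{q_2}$ (via H\"older), while you use the special case $p_2=1$, i.e.\ the trivial $L^1\to L^\infty$ endpoint; your exponent verifications (in particular $s<p\iff q>p_0p'$ and the absorption of the surplus power of $|\tau|$ via $q\ge\tfrac{d+2}{d}p'$) are correct and make this choice work.
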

\begin{proof}
	For arbitrary $q_1>\frac{2(d+3)}{d+1}$, Tao's bilinear restriction estimate (see Lemma \ref{11}) implies that 
	\begin{equation}\label{eq2}
		\Vert \mathcal{E}_{\alpha}f_{\tau}\mathcal{E}_{\alpha}f_{\tau'}\Vert_{L^{\frac{q_1}{2}}(\mathbb{R}^{d+1})}\lesssim|\tau|^{1-2\frac{d+2}{dq_1}}\Vert f_{\tau}\Vert_{L^2(\mathbb{R}^d)}\Vert f_{\tau'}\Vert_{L^2(\mathbb{R}^d)}\lesssim |\tau|^{1-2\frac{d+\alpha}{dq_1}}\Vert f_{\tau}\Vert_{L^2(\mathbb{R}^d)}\Vert f_{\tau'}\Vert_{L^2(\mathbb{R}^d)}
	\end{equation}
	holds for all $f\in L^2(\mathbb{R}^d)$ with $\mathrm{supp}f\subset \mathcal{A}_0$ and for all $\tau\sim\tau'\subset\mathcal{A}_0$. Here the second inequality is followed by $|\tau|\leq 1$ and $\alpha> 2$. For arbitrary $p_2\in [1,2]$ and $q_2=\frac{d+\alpha}{d}p_2'$, $\mathcal{E}_{\alpha}$ is bounded from $L^p(\mathbb{R}^d)$ to $L^{q}(\mathbb{R}^{d+1})$. Thus by Hölder's inequality, we can conclude that
	\begin{equation}\label{eq4}
		\Vert \mathcal{E}_{\alpha}f_{\tau}\mathcal{E}_{\alpha}f_{\tau'}\Vert_{L^{\frac{q_2}{2}}(\mathbb{R}^{d+1})}\lesssim\Vert f_{\tau}\Vert_{L^{p_2}(\mathbb{R}^d)}\Vert f_{\tau'}\Vert_{L^{p_2}(\mathbb{R}^d)}.
	\end{equation}
	We can choose suitable $q_1$ and $q_2$ such that $p$ lies between  $q_1$ and $q_2$. Interpolating these two bilinear estimates \eqref{eq2} and \eqref{eq4}, there exists $s<p$ such that 
	\begin{equation}\label{eq16}
		\Vert \mathcal{E}_{\alpha}f_{\tau}\mathcal{E}_{\alpha}f_{\tau'}\Vert_{L^{\frac{q}{2}}(\mathbb{R}^{d+1})}\lesssim|\tau|^{-2(\frac{1}{s}-\frac{1}{p})}\Vert f_{\tau}\Vert_{L^{s}(\mathbb{R}^d)}\Vert f_{\tau'}\Vert_{L^{s}(\mathbb{R}^d)}
	\end{equation}
	holds for all $f\in L^p(\mathbb{R}^d)$ with $\mathrm{supp}f\subset\mathcal{A}_0$. Then for any $j\in\{1,2,\cdots,K_{d,\alpha}\}$, applying the Whitney decomposition and Lemma \ref{12}, we have
	$$\l\Vert \mathcal{E}_{\alpha}f_{\mathcal{A}_0^j} \r\Vert_{L^q(\mathbb{R}^{d+1})}^2 \leq \l\Vert \sum_{\tau\sim\tau'\subset\mathcal{A}_0^j} \mathcal{E}_{\alpha}f_{\tau} \mathcal{E}_{\alpha}f_{\tau'} \r\Vert_{L^{\frac{q}{2}}(\mathbb{R}^{d+1})} \lesssim \l(\sum_{\tau\sim\tau'\subset\mathcal{A}_0^{j_0}} \left\Vert \mathcal{E}f_{\tau} \mathcal{E}f_{\tau'} \right\Vert_{L^{\frac{q}{2}}(\mathbb{R}^{d+1})}^{\theta_1} \r)^\frac{1}{\theta_1},$$
	where $\theta_1:=(q/2)_*>p/2>s/2$ and recall $(q/2)_*:= \min\{q/2,(q/2)'\}$. By \eqref{eq16}, we further have
	$$\Vert \mathcal{E}_{\alpha}f_{\tau}\mathcal{E}_{\alpha}f_{\tau'}\Vert_{L^{\frac{q}{2}}(\mathbb{R}^{d+1})}\lesssim|\tau|^{-2(\frac{1}{s}-\frac{1}{p})}\Vert f_{\tau}\Vert_{L^{s}(\mathbb{R}^d)}\Vert f_{\tau'}\Vert_{L^{s}(\mathbb{R}^d)}\lesssim |\tau''|^{-2(\frac{1}{s}-\frac{1}{p})}\Vert f_{\tau''}\Vert_{L^s(\mathbb{R}^d)}^2,$$
	where $\tau''$ is a slightly larger cube containing both $\tau$ and $\tau'$. Thus we conclude
	$$\left\Vert \mathcal{E}_{\alpha}f_{\mathcal{A}_0^j}\right\Vert_{L^q(\mathbb{R}^{d+1})}^{2}\lesssim \l( \sum_k\sum_{\tau\in\mathcal{D}_{k}} |\tau|^{-2\theta_1 (\frac{1}{s}-\frac{1}{p})} \Vert f_{\tau}\Vert_{L^s(\mathbb{R}^d)}^{2\theta_1} \r)^{\frac{1}{\theta_1}}.$$
	Applying Lemma \ref{L:Xsb to chip refinement}, we can deduce
	\begin{equation*}
		\begin{aligned}
			\left\Vert \mathcal{E}_{\alpha}f_{\mathcal{A}_0}\right\Vert_{L^q(\mathbb{R}^{d+1})}
			\lesssim &\max_{1\leq j\leq K_{d,\alpha}} \left\Vert\mathcal{E}_{\alpha}f_{\mathcal{A}_0^j}\right\Vert_{L^p(\mathbb{R}^{d+1})}\\
			\lesssim &\l(\sum_k\sum_{\tau\in\mathcal{D}_{k}}|\tau|^{-2 \theta_1 (\frac{1}{s}-\frac{1}{p})}\Vert f_{\tau}\Vert_{L^s(\mathbb{R}^d)}^{2\theta_1} \r)^{\frac{1}{2\theta_1}}\\
			\lesssim &\sup_{k\in\mathbb{Z}}\sup_{\tau\in\mathcal{D}_{k}}\sup_{l\geq 0}2^{-c_1l}\Vert f_{\tau,l}\Vert_{L^p(\mathbb{R}^d)}^{\theta_2}\Vert f\Vert_{L^p(\mathbb{R}^d)}^{1-\theta_2}
		\end{aligned}
	\end{equation*}
	for some $c_1>0$ and $0<\theta_2<1$. Then by rescaling, we obtain
	$$\left\Vert \mathcal{E}_{\alpha}f_{\mathcal{A}_N}\right\Vert_{L^q(\mathbb{R}^{d+1})}
	\lesssim\sup_{k\in\mathbb{Z}}\sup_{\tau\in\mathcal{D}_{k}}\sup_{l\geq 0}2^{-c_1l}\Vert f_{\tau,l}\Vert_{L^p(\mathbb{R}^d)}^{\theta_2}\Vert f\Vert_{L^p(\mathbb{R}^d)}^{1-\theta_2}$$
	holds for all $f\in L^p(\mathbb{R}^d)$ with $\mathrm{supp}f\subset\mathcal{A}_N$. Combining this estimate and Lemma \ref{14}, we get the desired estimate \eqref{eq18} and finish the proof.
\end{proof}

\begin{lem}[Chip extraction]\label{15}
	There exists a decreasing sequence $\rho_j\to0$ as $j\to\infty$, such that for every $f\in L^p(\mathbb{R}^d)$, there exists a sequence $\tau^j$ of dyadic cubes satisfying that: if the sequences $(g^j,r^j)$ are defined inductively by the following
	\begin{equation}\label{L:Chip extraction-1}
		r^0:=f,\quad f^j:=r^{j-1}\chi_{\tau^j}\chi_{\{\xi:|f(\xi)|<2^j |\tau^j|^{-{1}/{p}} \Vert f\Vert_{L^p(\mathbb{R}^d)}\}},\quad r^j:=r^{j-1}-f^j,
	\end{equation}
	then there holds
	\[\Vert\mathcal{E}_{\alpha}h^j\Vert_{L^q(\mathbb{R}^{d+1})}\leq \rho_j\Vert f\Vert_{L^p(\mathbb{R}^d)}, \quad\quad  \forall |h^j|=|r^j|\chi_{E} \;\; \text{with} \;\;\forall E\subset \bR^d.\]
\end{lem}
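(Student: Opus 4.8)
The plan is to extract the profiles greedily using the Chip refinement Lemma \ref{21} as the driving estimate. First I would introduce the quantity controlling the right-hand side of \eqref{eq18}: for a function $r$, set
\[
\mathcal{N}(r):=\sup_{k\in\mathbb{Z}}\sup_{\tau\in\mathcal{D}_k}\sup_{l\geq 0}2^{-c_0 l}\Vert r_{\tau,l}\Vert_{L^p(\bR^d)}^{\theta}\Vert f\Vert_{L^p(\bR^d)}^{1-\theta},
\]
so that Lemma \ref{21} reads $\Vert\mathcal{E}_{\alpha}r\Vert_{L^q}\lesssim \mathcal{N}(r)$. Given $r^{j-1}$, if $\Vert\mathcal{E}_{\alpha}r^{j-1}\Vert_{L^q}$ is small compared to $\Vert f\Vert_{L^p}$ we are essentially done for that stage; otherwise $\mathcal{N}(r^{j-1})\gtrsim \Vert f\Vert_{L^p}$, and the triple sup is almost attained by some $(k,\tau,l)$. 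I would then choose $\tau^j$ to be (a bounded dilate of) that cube $\tau$, and note that since the sup involves the weight $2^{-c_0 l}$ which decays in $l$, only $l\lesssim 1$ can be relevant once we normalize; hence the chip $f^j=r^{j-1}\chi_{\tau^j}\chi_{\{|f|<2^j|\tau^j|^{-1/p}\Vert f\Vert_p\}}$ defined in \eqref{L:Chip extraction-1} captures a fixed proportion of the ``mass'' that $\mathcal{N}$ sees — this is the standard matching between the cutoff $\{|f|<2^l\Vert f\Vert_p|\tau|^{-1/p}\}$ in Lemma \ref{21} and the cutoff $\{|f|<2^j|\tau^j|^{-1/p}\Vert f\Vert_p\}$ in the extraction, using that $j\to\infty$ so eventually $2^j$ dominates any relevant $2^l$.

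Next I would track the decrement in $L^p$ norm. Because the profiles are extracted from essentially disjoint regions (a dyadic cube intersected with a level set, then removed), the $L^p$ norms are nearly orthogonal: $\Vert r^{j}\Vert_{L^p}^p \leq \Vert r^{j-1}\Vert_{L^p}^p - \Vert f^j\Vert_{L^p}^p$, and more importantly $\Vert f^j\Vert_{L^p}^p \gtrsim \delta_j^p \Vert f\Vert_{L^p}^p$ whenever $\Vert\mathcal{E}_{\alpha}r^{j-1}\Vert_{L^q}\geq \delta_j\Vert f\Vert_{L^p}$, by the matching argument above combined with $\Vert\mathcal{E}_\alpha r^{j-1}\Vert_{L^q}\lesssim \mathcal{N}(r^{j-1})\lesssim (\Vert f^j\Vert_{L^p}/\Vert f\Vert_{L^p})^{\theta}\Vert f\Vert_{L^p}$, i.e. $\Vert f^j\Vert_{L^p}\gtrsim \delta_j^{1/\theta}\Vert f\Vert_{L^p}$. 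Summing these lower bounds and using $\sum_j\Vert f^j\Vert_{L^p}^p\leq \Vert f\Vert_{L^p}^p$ forces $\delta_j\to 0$; quantitatively, one can run this so that $\Vert\mathcal{E}_\alpha r^j\Vert_{L^q}\leq \rho_j\Vert f\Vert_{L^p}$ for a universal decreasing null sequence $\rho_j$ \emph{independent of $f$} — the key point being that the implied constants come only from Lemma \ref{21} and the doubling of cubes, not from $f$. To upgrade from $\Vert\mathcal{E}_\alpha r^j\Vert_{L^q}\leq \rho_j\Vert f\Vert_{L^p}$ to the stated estimate for all $h^j$ with $|h^j|=|r^j|\chi_E$, I would note that truncating to a subset $E$ and replacing the phase of $r^j$ by an arbitrary one does not increase the ``chip quantity'': more precisely, applying Lemma \ref{21} directly to $h^j$ in place of $f$, the supremum $\sup_{k,\tau,l}2^{-c_0l}\Vert (h^j)_{\tau,l}\Vert_{L^p}^{\theta}\Vert h^j\Vert_{L^p}^{1-\theta}$ is dominated by the corresponding quantity for $r^j$ restricted to $E$, which is in turn controlled by what the greedy procedure already arranged for $r^j$; the level sets defining $(h^j)_{\tau,l}$ are governed by $|h^j|=|r^j|\chi_E\leq|r^j|$ pointwise, so the relevant chips for $h^j$ sit inside those for $r^j$.

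The main obstacle I anticipate is the uniformity of $\rho_j$ in $f$ together with making the greedy bookkeeping precise. One has to be careful that at step $j$ the extracted chip $f^j$ is forced to use the specific threshold $2^j|\tau^j|^{-1/p}\Vert f\Vert_p$ dictated by \eqref{L:Chip extraction-1}, whereas Lemma \ref{21} hands us a chip at some a priori unrelated level $2^l$; reconciling these requires observing that the $2^{-c_0 l}$ decay means we may as well assume $l$ is bounded by a constant depending only on how small we want $\rho_j$ — and then $2^j\gg 2^l$ for $j$ large, so the Lemma's chip is contained in ours. A secondary technical point is that $\tau^j$ from the lemma is a cube of some scale $2^{-k}$ which we must slightly enlarge (by a fixed dyadic factor) so that the chips actually tile, so that the near-disjointness yielding $\sum_j\Vert f^j\Vert_{L^p}^p\leq \Vert f\Vert_{L^p}^p$ is legitimate up to a bounded overlap constant; this is exactly the standard argument in \cite[Lemma 4.4]{Stovall2020} and I would follow it, indicating only the modifications needed for the annular/fractional setting (which are already absorbed into Lemma \ref{21} via Lemmas \ref{14}, \ref{L:Xsb to chip refinement}).
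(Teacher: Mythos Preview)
Your proposal is correct and follows essentially the same approach as the paper: greedy chip extraction driven by Lemma \ref{21}, with the disjointness of the $f^j$ giving $\sum_j\Vert f^j\Vert_{L^p}^p\leq\Vert f\Vert_{L^p}^p$ and hence $\rho_j\sim j^{-\theta/p}$. The paper's bookkeeping is marginally cleaner---it chooses $\tau^j$ to \emph{exactly} maximize $\Vert r^{j-1}\chi_\tau\chi_{\{|f|<2^j|\tau|^{-1/p}\}}\Vert_{L^p}$ at the prescribed level $j$ (after checking the max is attained), then observes that for $j'\in\{j+1,\ldots,2j\}$ one has $\Vert f^{j'}\Vert_{L^p}\geq A_j:=\sup_\tau\Vert r^{2j}\chi_\tau\chi_{\{|f|<2^j|\tau|^{-1/p}\}}\Vert_{L^p}$, yielding $jA_j^p\leq 1$ directly; this sidesteps your need to reconcile the near-maximizer's level $l$ with the prescribed level $j$.
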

\begin{proof}
	We are going to show that the desired decreasing sequence can be taken as $\rho_j:= C_{*} (j/2)^{-\theta/p}$, where $\theta$ is taken from Lemma \ref{21} and $C_{*}$ is the optimal constant of the inequality \eqref{eq18}. Then for fixed $j$, we investigate the following non-zero item
	\[\sup\limits_{\tau\in\mathcal{D}}\Vert f\chi_{\tau}\chi_{\{\xi:|f(\xi)|<2^j |\tau|^{-{1}/{p}} \|f\|_{L^p(\mathbb{R}^d)}\}}\Vert_{L^p(\mathbb{R}^d)}. \]
	Let the cube sequence $\tau_n= c_n + h_n[0,1]^d$ approach this supremum (bigger than half of this supremum). Then one can use the dominated convergence theorem to show that the cases $h_n\to \infty$ and $h_n\to 0$ cannot occur, and moreover, the case $c_n\to\infty$ cannot occur. These imply the fact that the aforementioned supremum can be reached.
		
	Now the proof is shown by induction. Without loss of generality, assume that $\Vert f\Vert_{L^p(\mathbb{R}^d)}=1$. Due to the fact discussed above, given $r^{j-1}$,  we can take $\tau^j$ such that
	\begin{equation} \label{E:Chip extraction-1}
		\|r^{j-1}\chi_{\tau^j}\chi_{\{\xi:|f(\xi)|<2^j |\tau^j|^{-{1}/{p}}\}} \|_{L^p(\mathbb{R}^d)} = \sup_{\tau} \|r^{j-1} \chi_{\tau}\chi_{\{\xi:|f(\xi)|<2^j |\tau|^{-{1}/{p}}\}} \|_{L^p(\mathbb{R}^d)},
	\end{equation}
	and then define the corresponding functions $(f^j, r^j)$ as shown in \eqref{L:Chip extraction-1}. This procedure leads to the sequence $(f^j,r^j)$. Then we introduce the following notation
	\[A_j:= \sup_{\tau\in \mathcal{D}} \|r^{2j} \chi_{\tau} \chi_{\{\xi:|f(\xi)|<2^{j} |\tau|^{-{1}/{p}}\}} \|_{L^p(\mathbb{R}^d)}.\]
	Since we are taking $\tau^j$ to be maximal in each step, for $j\leq j' \leq 2j$, there holds $A_j \leq \|f^{j'}\|_{L^p(\mathbb{R}^d)}$. Further noticing that by our construction the supports of $f^j$ are disjoint, we conclude
	\[j A_j^p \leq \sum_{k=1}^j \|f^{j+k}\|_{L^p(\mathbb{R}^d)}^p \leq 1, \quad \Rightarrow \quad A_j\leq j^{-1/p}.\]
	Then for any function $|h^{2j}|=|r^{2j}|\chi_E$ with some set $E$, by the Lemma \ref{21}, we obtain
	\[\|\mathcal{E}_{\alpha} h^{2j}\|_{L^q(\mathbb{R}^{d+1})} \leq C_{*} \sup_{j\geq 0} 2^{-c_0j} A_j^{\theta} \leq C_{*} \sup_{j\geq 0} 2^{-c_0j} j^{-\theta/p} \leq C_{*} j^{-\theta/p}.\]
	Therefore, we can choose $\rho_j:= C_{*} (j/2)^{-\theta/p}$ as mentioned above and completes the proof.
\end{proof}

One can observe that the functions $f^j$ in Lemma \ref{15}, even with small support, may have large values, and conversely, $f^j$ with small values may have wide support. Next, we aim to control the support and the value simultaneously.
\begin{pro}[Frequency localization]\label{18}
	Let $\{f_n\}$ be an extremal sequence for $M_{\alpha,p}$. Then there exist dilation parameters $\left\{h_n\right\} \subset \mathbb{R}_+$ such that
	\[\lim_{m\to\infty}\limsup_{n\to\infty}\left\Vert \left(h_n\right)^{\frac{d}{p}}f_n\left(h_n\cdot\right)\chi_{\left(\{\xi:|\xi|>m\}\cup\left\{\xi:\left|\left(h_n\right)^{{d}/{p}}f_n\left(h_n\cdot\right)\right|>m\right\}\right)}\right\Vert_{L^p(\mathbb{R}^{d})}=0.\]
\end{pro}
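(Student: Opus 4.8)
The plan is to combine the chip extraction (Lemma \ref{15}) with a pigeonhole-type argument to show that, after rescaling, an extremal sequence cannot lose mass onto either the high-frequency region $\{|\xi|>m\}$ or the large-value region $\{|f_n|>m\}$. First I would apply Lemma \ref{15} to each $f_n$ (after normalising $\|f_n\|_{L^p}=1$): this produces dyadic cubes $\tau_n^j$, functions $(f_n^j, r_n^j)$ satisfying \eqref{L:Chip extraction-1}, and the uniform remainder bound $\|\mathcal{E}_\alpha h\|_{L^q} \le \rho_j$ for any $|h| = |r_n^j|\chi_E$, with $\rho_j \to 0$ independent of $n$. Since $f_n = \sum_{j=1}^{J} f_n^j + r_n^J$ and the $f_n^j$ have disjoint supports, the $L^p$-orthogonality gives $\sum_j \|f_n^j\|_{L^p}^p \le 1$, so for each fixed $J$ the tail $\|r_n^J\|_{L^p}$ is bounded; the key point is that the \emph{extension} norm of the tail is uniformly small: $\|\mathcal{E}_\alpha r_n^J\|_{L^q} \le \rho_J$. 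Because $\{f_n\}$ is extremal, $\|\mathcal{E}_\alpha f_n\|_{L^q} \to M_{\alpha,p}$, and writing $\mathcal{E}_\alpha f_n = \sum_{j\le J}\mathcal{E}_\alpha f_n^j + \mathcal{E}_\alpha r_n^J$ with the triangle inequality forces $\limsup_n \|\sum_{j\le J}\mathcal{E}_\alpha f_n^j\|_{L^q} \ge M_{\alpha,p} - \rho_J$; combined with the extension-norm bound on $f_n^j$ (via the original estimate \eqref{neq1} and $\sum_j\|f_n^j\|_{L^p}^p\le1$) one learns that almost all of the $L^p$-mass is carried by finitely many chips $f_n^j$, i.e.\ $\lim_{J\to\infty}\limsup_n \|r_n^J\|_{L^p} = 0$ would follow if the extension norm controlled the $L^p$ norm from below --- which it does not in general, so instead one argues that the mass on the \emph{first} chip $f_n^1$ is bounded below uniformly.

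Concretely, I would extract a uniform lower bound $\|f_n^1\|_{L^p} \gtrsim 1$: indeed, if $\|f_n^1\|_{L^p} \to 0$ along a subsequence, then since $f_n^1$ is the maximal chip, $\sup_\tau \|r_n^0\chi_\tau\chi_{\{|f|<2|\tau|^{-1/p}\}}\|_{L^p}\to 0$, and then Lemma \ref{21} (the chip refinement, with $l=0$ and the $2^{-c_0 l}$ factor only helping) would give $\|\mathcal{E}_\alpha f_n\|_{L^q} \lesssim (\sup_\tau\|f_{n,\tau,0}\|_{L^p})^\theta \to 0$, contradicting $\|\mathcal{E}_\alpha f_n\|_{L^q}\to M_{\alpha,p}>0$. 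Thus there is $\delta_0>0$ with $\|f_n^1\|_{L^p}\ge\delta_0$ for all large $n$. Now set the dilation parameter $h_n := \ell(\tau_n^1)$, the side length of the first chip's cube, and let $c_n$ be its corner, so the rescaled-and-translated $f_n^1$ lives (up to the value truncation) on a cube of side $1$. Here one must be careful: the statement only allows a dilation $h_n$, not a translation; but the frequency-translation is absorbed into the modulation part of the symmetry group $G$ only for $p=2$, so one should check whether the argument genuinely needs a translation or whether the cube $\tau_n^1$ can be taken, after using the $L^p$-invariance under translation-by-modulation \eqref{neq1}, to sit near the origin. (In fact one should replace $f_n$ by $\mathcal{G}(1, x_n, t_n)f_n$ at the very start so that the extremal property is preserved; this does not move the frequency support, so a separate argument controlling $|c_n|/h_n$ is needed.) Controlling $|c_n|/h_n$: if $|c_n|/h_n \to \infty$ then the chip $f_n^1$ concentrates at high frequency relative to its own scale; but then the stationary-phase decay \eqref{eq1} (or rather Lemma \ref{5} / the machinery behind Lemma \ref{4}) shows that such a chip contributes a strictly smaller extension norm than its "flattened" paraboloid counterpart, which again contradicts extremality when combined with the profile decomposition Corollary \ref{C:p=2 noncritical}-type reasoning for the dominant piece. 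So we may assume $|c_n|/h_n$ bounded, hence after the dilation $h_n$ the first chip is supported essentially in a fixed ball.

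The final step is a bootstrap on the chips: having localised $f_n^1$, I would iterate. The tail $r_n^1 = f_n - f_n^1$ still has $\|\mathcal{E}_\alpha r_n^1\|_{L^q}$ within $\rho_1$ of... actually, better: apply the whole argument to $r_n^1$ normalised, peeling off $f_n^2$ with a lower bound on its mass \emph{unless} the extension norm of $r_n^1$ is already below $\rho_1$, and since $\rho_J\to0$ this peeling terminates after finitely many steps in the sense that $\lim_{J}\limsup_n\|\mathcal{E}_\alpha r_n^J\|_{L^q}=0$. Each retained chip $f_n^j$ ($j\le J$) has, by construction, $|f_n^j| < 2^j|\tau_n^j|^{-1/p}$ on its support and support inside $\tau_n^j$; after dilating all of them by the \emph{same} $h_n$ one needs the cubes $\tau_n^j/h_n$ to stay in a bounded region with bounded side lengths --- this is where the orthogonality of the chips' scales must be confronted. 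The main obstacle, as I see it, is precisely this: a priori the chips $\tau_n^j$ may have wildly different scales $\ell(\tau_n^j)$, so a single dilation $h_n$ cannot flatten all of them simultaneously. Resolving this requires showing that \emph{all} the mass-carrying chips live at comparable scales --- which should follow from the annular refinement Lemma \ref{14} together with a scaling argument showing that mass at two very different dyadic scales would force the extension norm strictly below $M_{\alpha,p}$ (by an orthogonality/bilinear decoupling between the scales, analogous to the $2^{-c_1|N_1-N_2|}$ gain). Once all retained chips are at scale comparable to $h_n$ and located within $O(h_n)$ of the origin, the rescaled functions $(h_n)^{d/p}f_n(h_n\cdot)$ have, outside $\{|\xi|>m\}\cup\{|\cdot|>m\}$, exactly the retained chips (whose supports and values are controlled by $m$ once $m \gtrsim 2^J$ and the cubes are inside $|\xi|<m$), so the complement carries only $\sum_{j>J}\|f_n^j\|_{L^p}^p + \|r_n^J\text{-type error}\|^p$, which is $o_{J\to\infty}(1)$ uniformly in $n$ by the $L^p$-orthogonality $\sum_j\|f_n^j\|_{L^p}^p\le1$ and the vanishing of the retained-chip complement. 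Taking $m\to\infty$ after $n\to\infty$ then yields the claimed limit.
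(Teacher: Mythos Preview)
Your opening moves match the paper's: apply Lemma~\ref{15}, locate a chip $f_n^{j_0}$ with $\|\mathcal{E}_\alpha f_n^{j_0}\|_{L^q}\gtrsim 1$ (the paper does this by pigeonhole on the finitely many $j\le J_*$, not via Lemma~\ref{21}, but your route works too), rescale by $h_n=\ell(\tau_n^1)$, and rule out $|c_n|/h_n\to\infty$ by the stationary-phase decay \eqref{eq1}. You also correctly identify the genuine obstacle: a single dilation cannot normalise all chips unless their scales and centres are comparable.

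The gap is in how you propose to overcome that obstacle. You suggest that Lemma~\ref{14} plus a bilinear orthogonality between disparate scales should force all mass-carrying chips to live at one scale, but you give no mechanism for turning the $2^{-c_1|N_1-N_2|}$ gain into the statement ``for an extremal sequence, chips at scale $\neq h_n$ carry $o_{J\to\infty}(1)$ mass''. The annular refinement gives a pointwise inequality, not an $L^q$-orthogonality for sums of chips, and in particular does not by itself rule out a fixed $\varepsilon$ of mass sitting at a second scale for all $J$. Your plan also does not handle the \emph{location} of the later chips: even at the correct scale, a chip centred at $|\xi_n^j|\to\infty$ escapes every ball $\{|\xi|\le m\}$. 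One further point: you abandon the claim $\limsup_n\|r_n^J\|_{L^p}\to 0$ too quickly. It \emph{does} follow from extremality, via the disjoint-support identity $\|F_n^J\|_{L^p}^p = 1-\|r_n^J\|_{L^p}^p$ and the chain
\[
M_{\alpha,p}-\rho_J\le \|\mathcal{E}_\alpha F_n^J\|_{L^q}\le M_{\alpha,p}\bigl(1-\|r_n^J\|_{L^p}^p\bigr)^{1/p},
\]
which yields $\|r_n^J\|_{L^p}\lesssim\rho_J^{1/p}$ (this is \eqref{neq19} in the paper and is used crucially).

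The paper's resolution is structurally different from your sketch. It argues by \emph{contradiction}: assume the conclusion fails for the chosen $h_n$, so that a fixed $\varepsilon$ of mass lives outside $\{|\xi|\le m_l\}\cap\{|f_n|\le m_l\}$ for all $l$. After rescaling so $\tau_n^1$ is a fixed cube, split the indices $j\le J$ into \emph{good} ($(k_n^j,\xi_n^j)$ converge) and \emph{bad} (not), and write $F_n^J=G_n^J+B_n^J$. The failure hypothesis plus \eqref{neq19} forces $\liminf_n\|B_n^J\|_{L^p}>\varepsilon/2$, hence $\|G_n^J\|_{L^p}\le C_1<1$; on the other side, $f_n^1$ sits inside $G_n^J$ with $\|f_n^1\|_{L^p}\gtrsim 1$, so also $\|B_n^J\|_{L^p}\le C_2<1$. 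The key analytic input is that $\mathcal{E}_\alpha B_n^J\to 0$ pointwise a.e.\ (three cases: $k_n^j\to+\infty$ gives $L^1\to 0$; $k_n^j\to-\infty$ gives $L^2\to 0$; $k_n^j$ bounded but $|\xi_n^j|\to\infty$ gives weak $L^2\to 0$ and then Lemma~\ref{2}). Now the generalised Br\'ezis--Lieb lemma decouples $\|\mathcal{E}_\alpha F_n^J\|_{L^q}^q$ into $\|\mathcal{E}_\alpha G_n^J\|_{L^q}^q+\|\mathcal{E}_\alpha B_n^J\|_{L^q}^q+o(1)$, and the strict inequality $q>p$ together with $\max\{C_1,C_2\}<1$ gives
\[
M_{\alpha,p}-\rho_J \le M_{\alpha,p}\bigl(\|G_n^J\|_{L^p}^q+\|B_n^J\|_{L^p}^q\bigr)^{1/q}\le M_{\alpha,p}\,\max\{C_1,C_2\}^{1-p/q},
\]
a contradiction as $J\to\infty$. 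The point is that one never proves the bad chips carry \emph{no} mass; one only needs that they contribute asymptotically orthogonally in $L^q$ and that strict convexity ($q>p$) then rules out any nontrivial splitting. Your proposal is missing exactly this Br\'ezis--Lieb/good--bad mechanism.
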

\begin{proof}
	If this proposition were to fail, for any dilation parameters $\left\{h_n\right\}\subset \mathbb{R}_+$, there would exist $\varepsilon>0$ and $\lim_{l\to\infty} m_l=\infty$ such that 
	\begin{equation}\label{E:Frequency localization-1}
		\limsup_{n\to\infty}\left\Vert \left(h_n\right)^{\frac{d}{p}}f_n\left(h_n\cdot\right)\chi_{\left(\{\xi:|\xi|>m_l\}\cup\left\{\xi:\left|\left(h_n\right)^{\frac{d}{p}}f_n\left(h_n\cdot\right)\right|>m_l\right\}\right)}\right\Vert_{L^p(\mathbb{R}^d)}>\varepsilon
	\end{equation}
	for every $l\in\mathbb{N}$. Then by Lemma \ref{15}, we can take $J\in \bN$ large enough (to be determined later) and dyadic cubes $\tau_n^j$ such that
	\[f_n= \sum_{j=1}^J f_n^j + r_n^J, \quad \quad \quad \|\mathcal{E}_{\alpha} r_n^J\|_{L^q(\mathbb{R}^{d+1})} < \rho_J , \quad \quad \lim_{J \to \infty} \rho_J =0.\]
	Denote $F_n^J:= f_n - r_n^J$. Then the disjointness for the supports of $F_n^J$ and $r_n^J$ implies that
	\[M_{\alpha,p}-\rho_J \leq \liminf_{n\to\infty} \|\mathcal{E}_{\alpha} F_n^J\|_{L^q(\mathbb{R}^{d+1})} \leq M_{\alpha,p} (1-\|r_n^J\|_{L^p(\mathbb{R}^d)}^p)^{1/p} \leq M_{\alpha,p} - c_p \limsup_{n\to\infty} \|r_n^J\|_{L^p(\mathbb{R}^d)}^p.\]
	Hence, up to subsequences, we can assume
	\begin{equation}\label{neq19}
		\|r_n^J\|_{L^p(\mathbb{R}^d) }\lesssim \rho_J^{1/p}.
	\end{equation}
	Noticing that there exists $J_*$ such that $\rho_{J_*} <M_{\alpha,p}$, thus up to subsequences, there exists one index $j_0 \in\{1,2,\ldots ,J_*\}$ such that
	\begin{equation}\label{neq20}
		\Vert\mathcal{E}_{\alpha}f_n^{j_0}\Vert_{L^q(\mathbb{R}^{d+1})}\geq \frac{M_{\alpha,p}-\rho_{J_*}}{2J_*}>0, \quad\forall n\in \mathbb{N}.
	\end{equation}
	Without loss of generality, we may assume $j_0 =1$ and then introduce the following notations
	\[\tau_n^1 := \xi_n^1 + 2^{-k_n^1} [0,1]^d, \quad g_n^1(\xi) :=2^{\frac{k_n^1 d}{p}}f_n^1\left(2^{-k_n^1}\xi\right), \quad \Rightarrow \quad \l| g_n^1\left(\xi-2^{k_n^1}\xi_n^1\right) \r| \leqslant 2\chi_{[0,1]^d}(\xi).\]
	If there holds $2^{k_n^1}|\xi_n^1|\to\infty$ as $n\to\infty$, recalling the stationary phase consequence \eqref{eq1}, then the fact $\alpha>2$ and dominated convergence theorem imply that
	\begin{equation*}
		\begin{aligned}
			&\l\|\mathcal{E}_{\alpha} f_n^1\r\|_{L^q(\mathbb{R}^{d+1})}^q=\l\|\mathcal{E}_{\alpha} g_n^1\r\|_{L^q(\mathbb{R}^{d+1})}^q \\
			&= \left(2^{k_n^1}|\xi_n^1|\right)^{2-\alpha} \int_{\bR^{d+1}} \l|\int_{\bR^d} e^{ix\xi + it \frac{|\xi-2^{k_n^1}\xi_n^1|^{\alpha} - \left(2^{k_n^1}|\xi_n^1|\right)^{\alpha} + \alpha\left(2^{k_n^1}|\xi_n^1|\right)^{\alpha-2} 2^{k_n^1}\xi\cdot\xi_n^1}{\left(2^{k_n^1}|\xi_n^1|\right)^{\alpha-2}}} g_n^1\left(\xi-2^{k_n^1}\xi_n^1\right)\mathrm{d}\xi\r|^q \ddd x \ddd t \\
			&=o_{n\to\infty}(1),
		\end{aligned}
	\end{equation*}
	which contradicts the inequality \eqref{neq20}. Hence, up to subsequences, there exists $|\xi^1|<\infty$ such that $2^{k_n^1}\xi_n^1\to \xi^1$ as $n\to\infty$. Thus by applying scaling symmetries, we can assume $\tau_n^1 =\xi^1+ [0,1]^d$ for all $n$ and the remainder cubes can be written as
	\[\tau_n^j = \xi_n^j + 2^{-k_n^j} [0,1]^d, \;\;\; k_n^j \in \bZ, \;\;\; \xi_n^j \in 2^{-k_n^j} \bZ^d, \quad\quad n\in \bN, \quad 1\leq j \leq J.\]
	Then by our hypothesis, for such scaling symmetries $\{h_n\}$ we can get $\varepsilon$ and $m_l$ satisfying the inequality \eqref{E:Frequency localization-1}; and by taking $J$ large enough, the inequality \eqref{neq19} can guarantee that
	\begin{equation} \label{E:Frequency localization-2}
		\|f_n-F_n^J\|_{L^p(\mathbb{R}^d)} < \varepsilon/2.
	\end{equation}
	
	For each fixed $j$, up to subsequences for $n$, we may assume either $|k_n^j|\to \infty$ or $|k_n^j|\to k^j$, either $|\xi_n^j|\to \infty$ or $\xi_n^j\to \xi^j$. We say that the index $j$ is \textit{good} if the parameters $(|k_n^j|, \xi_n^j)\to (k^j, \xi^j)$ as $n$ goes to infinity, and say $j$ is \textit{bad} if it is not good. Then we decompose
	\[F_n^J=G_n^J + B_n^J, \quad G_n^J:= \sum_{j \;\text{good}} f_n^j, \quad B_n^J:= F_n^J- G_n^J.\]
	By the choice of $G_n^J$, for $l$ large enough, there holds
	\[A_l \cap \supp G_n^J =\emptyset, \quad\quad A_l:= \l\{\xi: |\xi|>m_l \r\} \cup \l\{\xi: |f_n(\xi)|>m_l \r\}.\]
	Thus, up to subsequences for $n$, we can use \eqref{E:Frequency localization-2} to deduce
	\[\|f_n\|_{L^p(A_l)} = \|B_n^J + r_n^J\|_{L^p(A_l)} >\varepsilon, \quad \Rightarrow\quad \liminf_{n\to\infty} \|B_n^J\|_{L^p(\mathbb{R}^d)} > \varepsilon/2.\]
	Recalling that $F_n^J$ is a portion of $f_n$, we can further deduce
	\[\|G_n^J\|_{L^p(\mathbb{R}^d)} \leq (1-(\varepsilon/2)^p)^{1/p} \leq C_{1,\varepsilon,p} <1.\]
	Then we investigate the item $f_n^1$. Recalling the estimate \eqref{neq20} and the extension estimate \eqref{neq1}, we have $\|f_n^1\|_{L^p}\gtrsim 1$. Also recalling that $\tau_n^1 = \xi^1+[0,1]^d$ for all $n$, this implies that $f_n^1$ is one portion of $G_n^J$. Then by the disjointness of the portions $f_n^k$, we obtain
	\[1\lesssim \|f_n^1\|_{L^p(\mathbb{R}^d)}\leq \|G_n^J\|_{L^p(\mathbb{R}^d)}.\]
	Hence we conclude
	\[\|B_n^J\|_{L^p(\mathbb{R}^d)} = \l(\|F_n^J\|_{L^p(\mathbb{R}^d)}^p - \|G_n^J\|_{L^p(\mathbb{R}^d)}^p\r)^{1/p} \leq C_{2,p}<1.\]
		
	Next we are going to apply the generalized Br\'ezis-Lieb lemma \cite[Lemma 3.1]{FLS2016}, so we need to show that $\mathcal{E}_{\alpha} B_n^J (x,t)\to 0$ almost everywhere in $\bR^{d+1}$ as $n\to \infty$. Since $J$ is a fixed finite number, we only need to investigate each item $\mathcal{E}_{\alpha} f_n^j$ with bad index $j$. For each fixed bad index $j$, we can divide the proof into three cases: if $k_n^j \to +\infty$ as $n$ goes to infinity then $f_n^j \to 0$ in $L^1(\mathbb{R}^d)$, which by direct computation implies $\mathcal{E}_{\alpha} f_n^j \to 0$ almost everywhere; if $k_n^j \to -\infty$  as $n$ goes to infinity then $f_n^j \to 0$ in $L^2(\mathbb{R}^d)$, which implies $\mathcal{E}_{\alpha} f_n^j \to 0$ in $L^{2+\frac{2\alpha}{d}}(\mathbb{R}^{d+1})$ and up to subsequences $\mathcal{E}_{\alpha} f_n^j \to 0$ almost everywhere; if $|k_n^j|$ remains bounded and $|\xi_n^j| \to \infty$ as $n$ goes to infinity, which implies $f_n^j \rightharpoonup 0$ in $L^2(\mathbb{R}^d)$ and further the Lemma \ref{2} implies $\mathcal{E}_{\alpha}f_n^j(t,x) \to 0$ almost everywhere. In summary, for fixed $J$, we obtain the desired pointwise convergence conclusion $\mathcal{E}_{\alpha} B_n^J \to 0$ almost everywhere. By the construction, for fixed $J$, there exists $G^J\in L^p(\mathbb{R}^d)$ such that $G_n^J \to G^J$ in $L^p(\mathbb{R}^d)$ as $n$ goes to infinity. Hence, by the aforementioned generalized Br\'ezis-Lieb lemma and the fact $q>p$, we conclude
	\begin{align*}
		M_{\alpha,p}-\rho_J &\leq \limsup_{n\to\infty} \l\|\mathcal{E}_{\alpha} F_n^J\r\|_{L^q(\mathbb{R}^{d+1})} = \limsup_{n\to\infty} \l(\|\mathcal{E}_{\alpha} G_n^J\|_{L^q(\mathbb{R}^{d+1})}^q + \|\mathcal{E}_{\alpha} B_n^J\|_{L^q(\mathbb{R}^{d+1})}^q\r)^{1/q} \\
		&\leq M_{\alpha,p} \l(\|G_n^J\|_{L^p(\mathbb{R}^d)}^q + \|B_n^J\|_{L^p(\mathbb{R}^d)}^q\r)^{1/q} \leq M_{\alpha,p} \max \{C_{1,\varepsilon,p}, C_{2,p}\}^{1-p/q} \|F_n\|_{L^p(\mathbb{R}^d)}^{1/q} \\
		&\leq M_{\alpha,p} \max \{C_{1,\varepsilon,p}, C_{2,p}\}^{1-p/q}.
	\end{align*}
	Finally, letting $J \to \infty$ will lead to a contradiction, and thus we complete the proof.
\end{proof}

\subsection{Space-time localization}\label{SubS:Space-time localization}
Now, we utilize the aforementioned $L^2$-theory to establish the $L^p$-based profile decomposition for frequency-localized sequences.
\begin{pro}[$L^p$-profile decomposition]\label{19}
	Let $R>0$ and $\{f_n\}$ be a sequence of measurable functions which support on $\{\xi:|\xi|<R\}$ and satisfy $|f_n(\xi)|<R$. Then up to subsequences there exist bounded measurable functions $\phi^j$ supported on $\{\xi:|\xi|<R\}$ and the integer $J_0\in\mathbb{N}_+ \cup \{\infty\}$ as well as the parameters $\{(x_n^j,t_n^j)\}\subset\mathbb{R}^d\times\mathbb{R}_+$, so that for all $J\leq J_0$ we have the following profile decomposition
	$$f_n=\sum_{j=1}^J e^{-i(x_n^j,t_n^j)(\cdot,|\cdot|^\alpha)}\phi^j+\omega_n^J,$$
	with the following properties: the weak convergence for profiles
	\begin{equation}\label{eq30}
		e^{i(x_n^j,t_n^j)(\cdot,|\cdot|^\alpha)}f_n\rightharpoonup\phi^j \quad \text{in} \;\; L^p(\mathbb{R}^d) \quad\text{as}\;\; n\to\infty, \quad \forall j\leq J;
	\end{equation}
	the orthogonality of parameters
	\begin{equation}\label{eq26}
		\lim_{n\to\infty}(|x_n^j-x_n^k|+|t_n^j-t_n^k|)=\infty,\quad \forall j\ne k;
	\end{equation}
	for each fixed $J$, the quasi-orthogonality of $L^p$ norms
	\begin{equation}\label{eq27}
		\liminf_{n\to\infty}\left(\Vert f_n\Vert_{L^p(\mathbb{R}^d)}^{p'}-\sum_{j=1}^J\Vert \phi^j\Vert_{L^p(\mathbb{R}^d)}^{p'}\right)\geq 0;
	\end{equation}
	the orthogonality of $L^q$ norms
	\begin{equation}\label{eq28}
		\left\Vert \mathcal{E}_{\alpha} f_n\right\Vert_{L^q(\mathbb{R}^{d+1})}^q=\sum_{j=1}^J\left\Vert \mathcal{E}_{\alpha}\phi^j \right\Vert_{L^q(\mathbb{R}^{d+1})}^q+\left\Vert \mathcal{E}_{\alpha} \omega_n^J\right\Vert_{L^q(\mathbb{R}^{d+1})}^q+o_{n\to\infty}(1);
	\end{equation}
	and the smallness of the remainder $L^q$ norms
	\begin{equation}\label{eq29}
		\lim_{J\to J_0}\limsup_{n\to\infty}\left\Vert \mathcal{E}_{\alpha} \omega_n^J\right\Vert_{L^q(\mathbb{R}^{d+1})}=0.
	\end{equation}
\end{pro}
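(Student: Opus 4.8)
The plan is to deduce the statement from the noncritical $L^2$ profile decomposition (Corollary \ref{16}), exploiting that the frequency localization forces the dilation parameters to degenerate and therefore disappear.

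\emph{Step 1: reduction to the $L^2$ decomposition and collapse of the dilations.} Since each $f_n$ is supported on a fixed ball and pointwise bounded, $\{f_n\}$ is bounded in $L^2(\mathbb{R}^d)$, so Corollary \ref{16} applies: up to a subsequence there are orthogonal parameters $\{(h_n^j,x_n^j,t_n^j)\}$, profiles $\phi^j\neq0$, and remainders $\omega_n^J$ with $\mathcal{G}^{-1}(h_n^j,x_n^j,t_n^j)f_n\rightharpoonup\phi^j$ in $L^2$, together with \eqref{eq12} and \eqref{eq14}. The key point is that $\{h_n^j\}$ is bounded above and below for each $j$: the sequence $\mathcal{G}^{-1}(h_n^j,x_n^j,t_n^j)f_n$ is supported on $\{|\xi|<R/h_n^j\}$, so if $h_n^j\to\infty$ it is carried by sets of vanishing measure while staying $L^2$-bounded, forcing $\phi^j=0$; and since $\|\mathcal{G}^{-1}(h_n^j,x_n^j,t_n^j)f_n\|_{L^2}=(h_n^j)^{d/p-d/2}\|f_n\|_{L^2}$ with $d/p-d/2>0$ for $p<2$, the case $h_n^j\to0$ also forces $\phi^j=0$. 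So, passing to a further subsequence, $h_n^j\to h^j\in(0,\infty)$, and absorbing $h^j$ into the profile (replacing $\phi^j$ by $(h^j)^{-d/p}\phi^j(\cdot/h^j)$) at the cost of an $L^p$-error that tends to $0$ and can be moved into $\omega_n^J$, we may take $h_n^j\equiv1$. Then $\mathcal{G}(1,x_n^j,t_n^j)\phi^j=e^{-i(x_n^j,t_n^j)(\cdot,|\cdot|^{\alpha})}\phi^j$, so the decomposition has the asserted form, the orthogonality of $(h_n^j,x_n^j,t_n^j)$ degenerates to \eqref{eq26}, and, since weak $L^2$-limits on a fixed ball preserve support and pointwise bounds, $\phi^j$ is supported on $\{|\xi|<R\}$ with $|\phi^j|\le R$; moreover \eqref{eq12} gives $\sum_j\|\phi^j\|_{L^2}^2<\infty$, so $\{\omega_n^J\}$ stays bounded in $L^2$ uniformly in $J,n$.

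\emph{Step 2: the $L^p$ weak convergence, the $L^q$ orthogonality, and the smallness.} For \eqref{eq30}: $\mathcal{G}^{-1}(1,x_n^j,t_n^j)f_n=e^{i(x_n^j,t_n^j)(\cdot,|\cdot|^{\alpha})}f_n\rightharpoonup\phi^j$ in $L^2$, and since $p'\ge2$ any $\psi\in L^{p'}$ restricted to the fixed ball lies in $L^2$, so the same convergence holds weakly in $L^p$. For \eqref{eq28}: iterate the generalized Br\'ezis--Lieb lemma \cite[Lemma 3.1]{FLS2016} in $L^q$; at the $j$-th step the decomposition construction gives $e^{i(x_n^j,t_n^j)(\cdot,|\cdot|^{\alpha})}\omega_n^{j-1}\rightharpoonup\phi^j$ in $L^2$ (the earlier profiles being killed by \eqref{eq26}), whence by Lemma \ref{2} one has $\mathcal{E}_{\alpha}\omega_n^{j-1}(\,\cdot\,+(x_n^j,t_n^j))=\mathcal{E}_{\alpha}\big(e^{i(x_n^j,t_n^j)(\cdot,|\cdot|^{\alpha})}\omega_n^{j-1}\big)\to\mathcal{E}_{\alpha}\phi^j$ a.e., while $\omega_n^{j-1}$ is bounded in $L^p$ (fixed ball, pointwise bounded for fixed $j$) so $\mathcal{E}_{\alpha}\omega_n^{j-1}$ is bounded in $L^q$; Br\'ezis--Lieb then gives $\|\mathcal{E}_{\alpha}\omega_n^{j-1}\|_{L^q}^q=\|\mathcal{E}_{\alpha}\phi^j\|_{L^q}^q+\|\mathcal{E}_{\alpha}\omega_n^{j}\|_{L^q}^q+o_{n\to\infty}(1)$, and telescoping over $j\le J$ yields \eqref{eq28}. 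For \eqref{eq29}: $\mathcal{E}_{\alpha}\omega_n^J$ is bounded in $L^{\infty}$ uniformly in $n,J$ (since $\|\omega_n^J\|_{L^1}\lesssim_R\|\omega_n^J\|_{L^2}\lesssim_R1$), while \eqref{eq14} makes it small in $L^{2+2\alpha/d}$; as $q=\frac{d+\alpha}{d}p'>2+\frac{2\alpha}{d}$ because $p'>2$, interpolating $L^q$ between $L^{2+2\alpha/d}$ and $L^{\infty}$ converts \eqref{eq14} into \eqref{eq29}.

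\emph{Step 3: the $L^p$ quasi-orthogonality, and the main obstacle.} This is the genuinely $L^p$ part, and it is where Lemma \ref{17} enters. Fix $J$, choose a smooth $\varphi$ with $0\le\varphi\le1$ and $\varphi\equiv1$ on $\{|\xi|\le R\}$ (so $\varphi f_n=f_n$ and $\|\varphi\|_{\infty}=1$), and a standard approximate identity $\psi_{\varepsilon}$ with $\|\psi_{\varepsilon}\|_{L^1}=1$; then with $\pi(x_n^j,t_n^j)$ as in Lemma \ref{17} one has $\|\pi(x_n^j,t_n^j)f_n\|_{L^p}=\|\psi_{\varepsilon}*\big(e^{i(x_n^j,t_n^j)(\cdot,|\cdot|^{\alpha})}f_n\big)\|_{L^p}$. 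Since $e^{i(x_n^j,t_n^j)(\cdot,|\cdot|^{\alpha})}f_n\rightharpoonup\phi^j$ in $L^p$ and convolution with $\psi_{\varepsilon}$ is weak-to-weak continuous on $L^p$, weak lower semicontinuity of the norm gives $\|\psi_{\varepsilon}*\phi^j\|_{L^p}\le\liminf_n\|\pi(x_n^j,t_n^j)f_n\|_{L^p}$; raising to the power $p'$, summing over $j\le J$, using superadditivity of $\liminf$, and invoking Lemma \ref{17} (which yields $\limsup_n\|\Pi_n^J\|_{L^p\to\ell^{p'}(L^p)}\le1$, recalling $p^*=p'$) we get $\sum_{j=1}^J\|\psi_{\varepsilon}*\phi^j\|_{L^p}^{p'}\le\liminf_n\|f_n\|_{L^p}^{p'}$, and letting $\varepsilon\to0$ (so $\psi_{\varepsilon}*\phi^j\to\phi^j$ in $L^p$) gives \eqref{eq27}. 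The delicate bookkeeping is the dilation collapse in Step 1 and the faithful rewriting of the $L^2$-style decomposition in translation-only form without disturbing the orthogonalities; but the conceptually new difficulty, absent at $p=2$, is precisely this $p'$-power quasi-orthogonality, which replaces the Pythagorean identity and is the reason Stovall's operator estimate Lemma \ref{17} is needed.
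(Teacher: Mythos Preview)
Your proof is correct and follows essentially the same route as the paper: apply Corollary~\ref{16}, collapse the dilations using the uniform support and size bounds, upgrade the $L^2$ weak convergence to $L^p$ via the fixed ball, get \eqref{eq28} from Lemma~\ref{2} and Br\'ezis--Lieb, obtain \eqref{eq29} by interpolation, and deduce \eqref{eq27} from Lemma~\ref{17}.

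A few remarks on the small variations. For \eqref{eq29} you interpolate $L^q$ between $L^{2+2\alpha/d}$ and $L^\infty$, using the $L^1\to L^\infty$ bound on $\mathcal{E}_\alpha$ and the uniform $L^2$ (hence $L^1$) bound on $\omega_n^J$; the paper instead interpolates against $L^{q_1}$ with $q_1>q$ coming from \eqref{nneq2} with $p_1<p$. Both are fine. For \eqref{eq27} your use of weak-to-weak continuity of convolution together with weak lower semicontinuity of the norm is actually a little cleaner than the paper's argument, which expands $\pi_n^j f_n$ via the profile decomposition and shows each cross term vanishes before invoking Lemma~\ref{17}; your version avoids that computation at the cost of getting only the inequality, which is all that is needed.

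One minor inaccuracy: the formula $\|\mathcal{G}^{-1}(h_n^j,x_n^j,t_n^j)f_n\|_{L^2}=(h_n^j)^{d/p-d/2}\|f_n\|_{L^2}$ presumes the $L^p$-normalized symmetry, whereas Corollary~\ref{16} is stated with the $L^2$-normalized one. The conclusion for $h_n^j\to0$ is still correct via the pointwise bound $|\mathcal{G}^{-1}(h_n^j,\dots)f_n|\le (h_n^j)^{d/2}R\to0$, so this does not affect the argument.
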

\begin{proof}
	Applying Corollary \ref{16}, up to subsequences there exist orthogonal parameters and
	\[\{(\xi_n^j,h_n^j,x_n^j,t_n^j)\} \subset \mathbb{R}^d\times\mathbb{R}_+ \times\mathbb{R}^d \times\mathbb{R}, \quad J_0\in\mathbb{N}_+ \cup \{\infty\}, \quad \{\phi^j\} \subset L^2(\mathbb{R}^d), \quad \phi^j\ne0,\]
	such that for all $J\leq J_0$ we have the following decomposition
	\begin{equation*}
		f_n=\sum_{j=1}^J \mathcal{G}(h_n^j,x_n^j,t_n^j)\phi^j+\omega_n^J,
	\end{equation*}
	which satisfies the properties \eqref{neq16}-\eqref{eq14}. By the size and support assumptions on the $f_n$, considering the definition of $\phi^j \neq 0$, we ensure that the dilation parameters remain bounded away from $0$ and $\infty$. Hence for each $j$, upon selecting a subsequence, the dilation symmetries converge in the strong operator topology. Hence by redefining $\phi^j$ and then putting the error terms into the remainders, we may assume that
	\begin{equation} \label{E:Profile decomposition-Lp-1}
		f_n=\sum_{j=1}^J e^{-i(x_n^j,t_n^j)(\cdot,|\cdot|^\alpha)}\phi^j+\omega_n^J.
	\end{equation}
	Meanwhile, the desired conclusions \eqref{eq26} and \eqref{eq30} ensue. 
		
	Next we prove the $L^q$-orthogonality \eqref{eq28}. By Lemma \ref{2}, the weak convergence \eqref{neq16} implies the following pointwise estimate
	\begin{equation}\label{nneq1}
		\lim_{n\to\infty}\mathcal{E}_{\alpha}\left(e^{i(x_n^j,t_n^j)(\cdot,|\cdot|^\alpha)}f_n\right)(x,t)=\mathcal{E}_{\alpha}\phi^j(x,t), \quad \quad \text{a.e.} \;\; (x,t) \in \bR^{d+1}.
	\end{equation} 
	For any $p_1\in(1,2]$ and $q_1=\frac{d+\alpha}{d}p_1'$, the size and support conditions on $f_n$ imply that $\{f_n\}\subset L^{p_1}(\mathbb{R}^d)$. Then by the boundness of $\mathcal{E}_{\alpha}$, we have
	\[\left\{\mathcal{E}_{\alpha}\left(e^{i(x_n^j,t_n^j)(\cdot,|\cdot|^\alpha)}f_n\right)(x,t)\right\}\subset L^{q_1}(\mathbb{R}^{d+1}).\]
	Combining this with pointwise estimate \eqref{nneq1}, the classical Br\'ezis-Lieb lemma \cite[Section 1.9]{LL2001} implies the following $L^{q_1}$-orthogonality conclusion
	\begin{equation}\label{nneq2}
		\left\Vert \mathcal{E}_{\alpha} f_n\right\Vert_{L^{q_1}(\mathbb{R}^{d+1})}^{q_1} = \sum_{j=1}^J \left\Vert \mathcal{E}_{\alpha}\phi^j \right\Vert_{L^{q_1} (\mathbb{R}^{d+1})}^{q_1} + \left\Vert \mathcal{E}_{\alpha} \omega_n^J \right\Vert_{L^{q_1} (\mathbb{R}^{d+1})}^{q_1} +o_{n\to\infty}(1),
	\end{equation}
	which includes the desired orthogonality \eqref{eq28}. 
	Choosing $p_1\in(1,p)$, the $L^{q_1}$-orthogonality conclusion \eqref{nneq2} also implies that up to subsequences $\Vert \mathcal{E}_{\alpha} \omega_n^J \Vert_{L^{q_1}}$ is uniformly bounded for all $n$ and $J$. Combining this fact with the estimate \eqref{eq14}, one can use Hölder's inequality to deduce the desired property \eqref{eq29}.
		
	This leaves us to prove the quasi-orthogonality inequality \eqref{eq27} for fixed $J\in\mathbb{N}$ with $J\leq J_0$. We choose compactly supported smooth nonnegative functions $(\varphi,\psi)$ which satisfy
	\[\|\varphi\|_{L^{\infty}(\bR^d)}=\Vert \psi\Vert_{L^1(\mathbb{R}^d)}=1, \quad \Vert\psi*(\varphi\phi^j)-\phi^j\Vert_{L^p(\mathbb{R}^d)}<\varepsilon.\]
	Recalling the projection operator $\pi_n^j$ in Lemma \ref{17} defined as follows
	\[\pi(x_n^j,t_n^j)f(\xi) :=e^{-i(x_n^j,t_n^j)(\xi,|\xi|^{\alpha})} \left(\psi* \l(e^{i(x_n^j,t_n^j)(\cdot,|\cdot|^{\alpha})} \varphi f\r)\right)(\xi),\]
	and by the previous decomposition \eqref{E:Profile decomposition-Lp-1}, we have
	\begin{align*}
		\pi_n^j f_n(\xi) &=\sum_{k:k\ne j,k\leq J}e^{-i(t_n^j,x_n^j)(\xi,|\xi|^{\alpha})}\left(\psi*\left(\varphi e^{i(t_n^j-t_n^k,x_n^j-x_n^k)(\cdot,|\cdot|^{\alpha})}\phi^k\right)\right)(\xi) \\
		&+e^{-i(t_n^j,x_n^j)(\xi,|\xi|^{\alpha})}\left(\psi*(\varphi\phi^j)\right)(\xi) +e^{-i(t_n^j,x_n^j)(\xi,|\xi|^{\alpha})}\left(\psi*(\varphi e^{i(t_n^j,x_n^j)(\cdot,|\cdot|^{\alpha})}\omega_n^{J})\right)(\xi).
	\end{align*}
	Next we estimate each term. First, the property \eqref{eq26} implies that
	\[e^{-i(t_n^j-t_n^{k},x_n^j-x_n^{k})(\cdot,|\cdot|^{\alpha})}\phi^k\rightharpoonup0 \quad \text{in} \;\; {L^p(\mathbb{R}^d)} \quad \text{as} \;\; n\to\infty,\quad \forall k\ne j.\]
	By the boundedness and compact supports of $\psi$ and $\varphi$, the dominated convergence theorem further implies that for arbitrary fixed $j\neq k$ there holds
	\begin{equation} \label{E:Profile decomposition-Lp-2}
		\lim_{n\to\infty} \left\Vert \pi_n^j \l(e^{-i(t_n^k,x_n^k) (\cdot,|\cdot|^{\alpha})} \phi^k\r) \right\Vert_{{L^p(\mathbb{R}^d)}} =\lim_{n\to\infty} \left\Vert \psi*\left(\varphi e^{i(t_n^j-t_n^k,x_n^j-x_n^k) (\cdot,|\cdot|^{\alpha})} \phi^k \right) \right\Vert_{{L^p(\mathbb{R}^d)}} =0.
	\end{equation}
	And by \eqref{eq30} one can similarly obtain
	\[\lim_{n\to\infty} \left\Vert \pi_n^j \l(\omega_n^J \r) \right\Vert_{L^p(\mathbb{R}^d)} = \lim_{n\to\infty}\left\Vert\psi*\left(\varphi e^{i(t_n^j,x_n^j)(\cdot,|\cdot|^{\alpha})}\omega_n^{J}\right)\right\Vert_{L^p(\mathbb{R}^d)}=0.\]
	In summary, these estimates and the choice of $(\varphi,\psi)$ show that for each $j\leq J$ there holds 
	\[\lim_{n\to\infty}\left\Vert\pi_n^jf_n\right\Vert_{L^p(\mathbb{R}^d)}=\Vert\psi*(\varphi\phi^j)\Vert_{L^p(\bR^d)} = \|\phi^j\|_{L^p(\mathbb{R}^d)}+ o_{\varepsilon\to 0}(1).\]
	Then by Lemma \ref{17}, for fixed $J$, we conclude
	\begin{align*}
		\left\Vert \Vert \phi^j \Vert_{L^p} \right\Vert_{\ell^{p^*}} + o_{\varepsilon\to0}(1) &= \lim_{n\to\infty} \left\Vert \Vert \pi_n^j f_n \Vert_{L^p} \right\Vert_{\ell^{p^*}} = \lim_{n\to\infty} \l\Vert \Pi_n^J f_n \r\Vert_{\ell^{p^*}(L^p)} \leq \limsup_{n\to\infty} \Vert f_n \Vert_{L^p}.
	\end{align*}
	Finally, letting $\varepsilon\to0$, we obtain the desired conclusion \eqref{eq27} and finish the proof.
\end{proof}

\subsection{The $L^p$ convergence} \label{SubS:The Lp convergence}
Here we prove our main result Theorem \ref{n1} by using the frequency localization and the $L^p$-based profile decomposition obtained in the previous two subsections.
\begin{proof}[\textbf{Proof of Theorem \ref{n1}}]
	Let $\{f_n\}$ be an extremal sequence for $M_{\alpha,p}$. By Proposition \ref{18}, up to scaling there holds
	$$\lim_{m\to\infty}\limsup_{n\to\infty}\left\Vert f_n\chi_{\left(\{\xi:|\xi|>m\}\cup\left\{\xi:\left|f_n(\xi)\right|>m\right\}\right)}\right\Vert_{L^p(\mathbb{R}^{d})}=0,$$
	which gives the following
	\begin{equation*} 
		\lim_{m\to\infty}\limsup_{n\to\infty} \left\Vert \mathcal{E}_{\alpha} \left(f_n\chi_{\left(\{\xi:|\xi|>m\}\cup \left\{\xi:\left|f_n(\xi)\right|>m\right\} \right)} \right) \right\Vert_{L^q(\mathbb{R}^{d+1})}=0.
	\end{equation*}
	For convenience, we denote
	\[f_n^m :=f_n\chi_{\{\xi: |\xi|<m \} \cap\{\xi:|f_n(\xi)|<m\}},\]
	and then there holds
	\[\lim_{m\to\infty}\limsup_{n\to\infty} \| f_n^m \|_{L^p(\mathbb{R}^d)} = 1,\quad \lim_{m\to\infty}\limsup_{n\to\infty} \|\mathcal{E}_{\alpha} f_n^m \|_{L^q(\mathbb{R}^{d+1})} = M_{\alpha,p}.\]
	By Proposition \ref{19}, up to subsequences in $n$ (which is independent of $m$) we decompose $f_n^m$ as
	\[f_n^m=\sum\limits_{j=1}^J e^{-i(x_n^{m,j},t_n^{m,j})(\cdot,|\cdot|^\alpha)}\phi^{m,j}+\omega_n^{m,J},\quad 1\leq J\leq J_0, \quad J_0\in\mathbb{N}_+\cup\{\infty\}.\]
	Then using the properties \eqref{eq27}-\eqref{eq29} and $L^{q_1}$-orthogonality conclusion, noticing that $q>p'$, for each $m$ we conclude that
	\begin{align*}
		M_{\alpha,p}^q &-\limsup_{n\to\infty} \Vert \mathcal{E}_{\alpha}\left(f_n\chi_{\{\xi:|\xi|>m\}\cup\left\{\xi:\left|f_n(\xi)\right|>m\right\}}\right) \Vert_{L^q(\mathbb{R}^{d+1})}^q \\
		&\leq \limsup_{n\to\infty} \Vert \mathcal{E}_{\alpha}f_n^m \Vert_{L^q(\mathbb{R}^{d+1})}^q =\sum_{j=1}^{J_0} \Vert \mathcal{E}_{\alpha}\phi^{m,j} \Vert_{L^q(\mathbb{R}^{d+1})}^q \notag\\
		&\leq M_{\alpha,p}^{p^*} \left(\max_{1\leq j\leq J_0} \Vert \mathcal{E}_{\alpha}\phi^{m,j} \Vert_{L^q(\mathbb{R}^{d+1})} \right)^{q-p'} \sum_{j=1}^{J_0} \Vert\phi^{m,j} \Vert_{L^p(\mathbb{R}^d)}^{p'} \\
		&\leq M_{\alpha,p}^{p^*}\left(\max_{1\leq j\leq J_0} \Vert\mathcal{E}_{\alpha}\phi^{m,j}\Vert_{L^q(\mathbb{R}^{d+1})}\right)^{q-p'}.
	\end{align*}
	For each $m$, since $\sum_j \Vert\mathcal{E}_{\alpha}\phi^{m,j}\Vert_{L^q}$ is bounded, we can choose $j=j_m$ to maximize $\Vert\mathcal{E}_{\alpha}\phi^{m,j}\Vert_{L^q}$ and define
	\[\phi^m :=\phi^{m,j_m}, \quad (x_n^m,t_n^m) :=(x_n^{m,j_m}, t_n^{m,j_m}).\]
	Thus we conclude
	$$M_{\alpha,p}^q -o_{m\to\infty} (1) \leq M_{\alpha,p}^{p'}\Vert\mathcal{E}_{\alpha}\phi^{m}\Vert_{L^q(\mathbb{R}^{d+1})}^{q-p'}\leq M_{\alpha,p}^q\Vert\phi^m\Vert_{L^p(\mathbb{R}^d)}^{q-p'}\leq M_{\alpha,p}^q.$$
	Letting $m\to\infty$, this inequality and the property \eqref{eq27} imply that 
	$$\lim_{m\to\infty}\Vert \phi^m\Vert_{L^p(\mathbb{R}^d)}=1,\quad \lim_{m\to\infty}\Vert\mathcal{E}_{\alpha}\phi^m\Vert_{L^p(\mathbb{R}^d)}=M_{\alpha,p}.$$
	This means that $\phi^m$ is a quasi-extremal sequence in the sense that its $L^p$-norm has limit one (but not identically one), and moreover, the property \eqref{eq30} implies that
	\[e^{i(x_n^mt_n^m)(\cdot,|\cdot|^\alpha)}f_n^m\rightharpoonup\phi^m,\quad \text{in} \;\;{L^p(\mathbb{R}^d)} \quad \text{as} \;\; n\to\infty.\]
	Then for each fixed $m$, the lower semicontinuity of norms \cite[Section 2.11]{LL2001} implies
	\begin{align*}
		\|2\phi^m\|_{L^p(\bR^d)} &\leq \limsup_{n\to\infty} \l\|e^{i (x_n^m,t_n^m)(\cdot,|\cdot|^{\alpha})} f_n^m + \phi^m \r\|_{L^p(\mathbb{R}^d)} \\
		&\leq \limsup_{n\to\infty} \l\|e^{i (x_n^m,t_n^m)(\cdot,|\cdot|^{\alpha})} f_n^m \r\|_{L^p(\mathbb{R}^d)} + \|\phi^m\|_{L^p(\mathbb{R}^d)} \\
		&\leq 1+ \|\phi^m\|_{L^p(\mathbb{R}^d)},
	\end{align*}
	which further gives
	\begin{equation} \label{E:Precompactness-0.8}
		\lim_{m\to\infty} \limsup_{n\to\infty} \l\|e^{i (x_n^m,t_n^m)(\cdot,|\cdot|^{\alpha})} f_n^m + \phi^m \r\|_{L^p(\mathbb{R}^d)}=2.
	\end{equation}
	Here we introduce the following notations
	\[A(m,n):= \l\Vert e^{i(x_n^mt_n^m)(\cdot,|\cdot|^\alpha)} f_n^m +\phi^m \r\Vert_{L^p(\mathbb{R}^d)}, \quad B(m,n):= \l\Vert e^{i(x_n^mt_n^m)(\cdot,|\cdot|^\alpha)} f_n^m - \phi^m \r\Vert_{L^p(\mathbb{R}^d)}.\]
	The uniform convexity of $L^p$ norms \cite[Section 2.5]{LL2001} implies
	\begin{align*}
		\l[ A(m,n) + B(m,n) \r]^p + \l|A(m,n)- B(m,n) \r|^p \leq 2^p \l(\Vert\phi^m \Vert_{L^p(\mathbb{R}^d)}^p + \Vert f_n^m\Vert_{L^p(\mathbb{R}^d)}^p\r) \leq 2^{p+1},
	\end{align*}
	which, recalling \eqref{E:Precompactness-0.8} and the fact $F(x)=|x+C|^p$ is strictly convex for all $p>1$, further gives the following result \eqref{E:Precompactness-1} by letting $n\to\infty$ and then $m\to\infty$, 
	\begin{equation} \label{E:Precompactness-1}
		\lim\limits_{m\to\infty} \limsup\limits_{n\to\infty} \left\Vert f_n^m- e^{-i(x_n^m,t_n^m)(\cdot,|\cdot|^\alpha)} \phi^m \right\Vert_{L^p(\mathbb{R}^d)}=0.
	\end{equation}
	By this consequence, we can use the Proposition \ref{18} and Minkowski's inequality to deduce
	\begin{equation} \label{E:Precompactness-2}
		\lim\limits_{m\to\infty}\limsup\limits_{n\to\infty}\left\Vert f_n- e^{-i(x_n^m,t_n^m)(\cdot,|\cdot|^\alpha)} \phi^m \right\Vert_{L^p(\mathbb{R}^d)}=0,
	\end{equation}
	which further deduces
	\[\limsup\limits_{n\to\infty} \left\Vert e^{-i(x_n^{\widetilde{m}},t_n^{\widetilde{m}}) (\cdot,|\cdot|^\alpha)} \phi^{\widetilde{m}}- e^{-i(x_n^m,t_n^m) (\cdot,|\cdot|^\alpha)} \phi^m \right\Vert_{L^p(\mathbb{R}^d)}=o_{\min\{m,\widetilde{m}\}\to \infty}(1).\]
	Applying the projection operator $\pi_n^m$, recalling the estimate \eqref{E:Profile decomposition-Lp-2} and that $\phi^m$ is a quasi-extremal sequence, for sufficiently large $m$ and $\widetilde{m}$ one can conclude that $|(x_n^m -x_n^{\widetilde{m}}, t_n^m - t_n^{\widetilde{m}})| <\infty$ uniform in $n$. Hence for the relation \eqref{E:Precompactness-2}, by applying space-time translations to $f_n$, we may assume that $(x_n^M,t_n^M) \equiv 0$ for some fixed $M$ large enough; then for each $m\geq M$, up to subsequences, we may further assume $(x_n^m,t_n^m) \to (x^m, t^m)$ as $n$ goes to infinity. These assumptions and the relation \eqref{E:Precompactness-2}, by Minkowski's inequality, implies the following fact
	\[\lim\limits_{m\to\infty}\limsup\limits_{n\to\infty}\left\Vert f_n- \widetilde{\phi}^m \right\Vert_{L^p(\mathbb{R}^d)}=0, \quad \widetilde{\phi}^m := e^{-i(x^m,t^m) (\cdot,|\cdot|^\alpha)} \phi^m.\]
	By this fact and that $\widetilde{\phi}^m$ is a quasi-extremal sequence (since $\phi^m$ is a quasi-extremal sequence), we can use Minkowski's inequality to conclude that $\widetilde{\phi}^m$ is a Cauchy sequence which converges to some nonzero function $\phi\in {L^p(\mathbb{R}^d)}$; and further conclude that $f_n$ converges to this function $\phi$ as well. Therefore, we obtain that $\{f_n\} \subset {L^p(\mathbb{R}^d)}$ is a compact sequence and $\phi$ is an extremal function. In summary, we obtain the desired precompactness up to symmetries and complete the proof.
\end{proof}

\bigskip\bigskip

\bigskip\bigskip
\begin{flushleft}
	\vspace{0.3cm}\textsc{Boning Di \\
		School of Mathematical Sciences, University of Chinese Academy of Sciences, Beijing, 100049, People's Republic of China \\
		Academy of Mathematics and Systems Science, Chinese Academy of Sciences, Beijing, 100190, People's Republic of China} \\
	\emph{E-mail address}: \textsf{diboning@amss.ac.cn}
	
	\vspace{0.3cm}\textsc{Ning Liu \\
		School of Mathematical Sciences, University of Chinese Academy of Sciences, Beijing, 100049, People's Republic of China} \\
	\emph{E-mail address}: \textsf{liuning21@mails.ucas.ac.cn}
	
	\vspace{0.3cm}\textsc{Dunyan Yan \\
		School of Mathematical Sciences, University of Chinese Academy of Sciences, Beijing, 100049, People's Republic of China} \\
	\emph{E-mail address}: \textsf{ydunyan@ucas.ac.cn}
\end{flushleft}


\begin{thebibliography}{10}

\bibitem{BG1999}
Hajer Bahouri and Patrick G\'erard.
\newblock High frequency approximation of solutions to critical nonlinear wave
  equations.
\newblock {\em Amer. J. Math.}, 121(1):131--175, 1999.

\bibitem{BV2007}
Pascal B\'egout and Ana Vargas.
\newblock Mass concentration phenomena for the {$L^2$}-critical nonlinear
  {S}chr\"odinger equation.
\newblock {\em Trans. Amer. Math. Soc.}, 359(11):5257--5282, 2007.

\bibitem{BS2023}
Chandan Biswas and Betsy Stovall.
\newblock Existence of extremizers for {F}ourier restriction to the moment
  curve.
\newblock {\em Trans. Amer. Math. Soc.}, 376(5):3473--3492, 2023.

\bibitem{BOQ2020}
Gianmarco Brocchi, Diogo Oliveira~e Silva, and Ren\'e Quilodr\'an.
\newblock Sharp {S}trichartz inequalities for fractional and higher-order
  {S}chr\"odinger equations.
\newblock {\em Anal. PDE}, 13(2):477--526, 2020.

\bibitem{CKZ2013}
A.~Carbery, C.~E. Kenig, and S.~N. Ziesler.
\newblock Restriction for homogeneous polynomial surfaces in {$\Bbb{R}^3$}.
\newblock {\em Trans. Amer. Math. Soc.}, 365(5):2367--2407, 2013.

\bibitem{CK2007}
R\'emi Carles and Sahbi Keraani.
\newblock On the role of quadratic oscillations in nonlinear {S}chr\"odinger
  equations. {II}. {T}he {$L^2$}-critical case.
\newblock {\em Trans. Amer. Math. Soc.}, 359(1):33--62, 2007.

\bibitem{Carneiro2009}
Emanuel Carneiro.
\newblock A sharp inequality for the {S}trichartz norm.
\newblock {\em Int. Math. Res. Not. IMRN}, 2009(16):3127--3145, 2009.

\bibitem{COSS2021}
Emanuel Carneiro, Diogo Oliveira~e Silva, Mateus Sousa, and Betsy Stovall.
\newblock Extremizers for adjoint {F}ourier restriction on hyperboloids: the
  higher-dimensional case.
\newblock {\em Indiana Univ. Math. J.}, 70(2):535--559, 2021.

\bibitem{COX2011}
Yonggeun Cho, Tohru Ozawa, and Suxia Xia.
\newblock Remarks on some dispersive estimates.
\newblock {\em Commun. Pure Appl. Anal.}, 10(4):1121--1128, 2011.

\bibitem{CQ2014}
Michael Christ and Ren\'e Quilodr\'an.
\newblock Gaussians rarely extremize adjoint {F}ourier restriction inequalities
  for paraboloids.
\newblock {\em Proc. Amer. Math. Soc.}, 142(3):887--896, 2014.

\bibitem{CS2012A&P}
Michael Christ and Shuanglin Shao.
\newblock Existence of extremals for a {F}ourier restriction inequality.
\newblock {\em Anal. PDE}, 5(2):261--312, 2012.

\bibitem{CS2012Adv}
Michael Christ and Shuanglin Shao.
\newblock On the extremizers of an adjoint {F}ourier restriction inequality.
\newblock {\em Adv. Math.}, 230(3):957--977, 2012.

\bibitem{Demeter2020}
Ciprian Demeter.
\newblock {\em Fourier restriction, decoupling, and applications}, volume 184
  of {\em Cambridge Studies in Advanced Mathematics}.
\newblock Cambridge University Press, Cambridge, 2020.

\bibitem{DY2023}
Boning Di and Dunyan Yan.
\newblock Extremals for {$\alpha$}-{S}trichartz inequalities.
\newblock {\em J. Geom. Anal.}, 33(4):Paper No. 136, 46, 2023.

\bibitem{DY2024}
Boning Di and Dunyan Yan.
\newblock Sharp {F}ourier extension on fractional surfaces.
\newblock {\em J. Fourier Anal. Appl.}, 30(4):Paper No. 40, 35, 2024.

\bibitem{FS2024}
Taryn~C. Flock and Betsy Stovall.
\newblock On extremizing sequences for adjoint {F}ourier restriction to the
  sphere.
\newblock {\em Adv. Math.}, 453:Paper No. 109854, 44, 2024.

\bibitem{Foschi2007}
Damiano Foschi.
\newblock Maximizers for the {S}trichartz inequality.
\newblock {\em J. Eur. Math. Soc. (JEMS)}, 9(4):739--774, 2007.

\bibitem{Foschi2015}
Damiano Foschi.
\newblock Global maximizers for the sphere adjoint {F}ourier restriction
  inequality.
\newblock {\em J. Funct. Anal.}, 268(3):690--702, 2015.

\bibitem{FO2017}
Damiano Foschi and Diogo Oliveira~e Silva.
\newblock Some recent progress on sharp {F}ourier restriction theory.
\newblock {\em Anal. Math.}, 43(2):241--265, 2017.

\bibitem{FLS2016}
Rupert~L. Frank, Elliott~H. Lieb, and Julien Sabin.
\newblock Maximizers for the {S}tein-{T}omas inequality.
\newblock {\em Geom. Funct. Anal.}, 26(4):1095--1134, 2016.

\bibitem{FS2018}
Rupert~L. Frank and Julien Sabin.
\newblock Extremizers for the {A}iry-{S}trichartz inequality.
\newblock {\em Math. Ann.}, 372(3-4):1121--1166, 2018.

\bibitem{HS2012}
Dirk Hundertmark and Shuanglin Shao.
\newblock Analyticity of extremizers to the {A}iry-{S}trichartz inequality.
\newblock {\em Bull. Lond. Math. Soc.}, 44(2):336--352, 2012.

\bibitem{HZ2006}
Dirk Hundertmark and Vadim Zharnitsky.
\newblock On sharp {S}trichartz inequalities in low dimensions.
\newblock {\em Int. Math. Res. Not.}, 2006:1--18, 2006.

\bibitem{JS2016}
Jin-Cheng Jiang and Shuanglin Shao.
\newblock On characterization of the sharp {S}trichartz inequality for the
  {S}chr\"odinger equation.
\newblock {\em Anal. PDE}, 9(2):353--361, 2016.

\bibitem{JSS2017}
Jin-Cheng Jiang, Shuanglin Shao, and Betsy Stovall.
\newblock Linear profile decompositions for a family of fourth order
  schrödinger equations.
\newblock {\em arXiv:1410.7520}, 2017.

\bibitem{KT1998}
Markus Keel and Terence Tao.
\newblock Endpoint {S}trichartz estimates.
\newblock {\em Amer. J. Math.}, 120(5):955--980, 1998.

\bibitem{KPV1991}
Carlos~E. Kenig, Gustavo Ponce, and Luis Vega.
\newblock Oscillatory integrals and regularity of dispersive equations.
\newblock {\em Indiana Univ. Math. J.}, 40(1):33--69, 1991.

\bibitem{Keraani2001}
Sahbi Keraani.
\newblock On the defect of compactness for the {S}trichartz estimates of the
  {S}chr\"odinger equations.
\newblock {\em J. Differential Equations}, 175(2):353--392, 2001.

\bibitem{Keraani2006}
Sahbi Keraani.
\newblock On the blow up phenomenon of the critical nonlinear {S}chr\"{o}dinger
  equation.
\newblock {\em J. Funct. Anal.}, 235(1):171--192, 2006.

\bibitem{Kunze2003}
Markus Kunze.
\newblock On the existence of a maximizer for the {S}trichartz inequality.
\newblock {\em Comm. Math. Phys.}, 243(1):137--162, 2003.

\bibitem{LL2001}
Elliott~H. Lieb and Michael Loss.
\newblock {\em Analysis}, volume~14 of {\em Graduate Studies in Mathematics}.
\newblock American Mathematical Society, Providence, RI, second edition, 2001.

\bibitem{MV1998}
Frank Merle and Luis Vega.
\newblock Compactness at blow-up time for {$L^2$} solutions of the critical
  nonlinear {S}chr\"odinger equation in 2{D}.
\newblock {\em Internat. Math. Res. Notices}, 8:399--425, 1998.

\bibitem{NOST2023}
Giuseppe Negro, Diogo Oliveira~e Silva, Betsy Stovall, and James Tautges.
\newblock Exponentials rarely maximize {F}ourier extension inequalities for
  cones.
\newblock {\em arXiv:2302.00356}, 2023.

\bibitem{NOT2023}
Giuseppe Negro, Diogo Oliveira~e Silva, and Christoph Thiele.
\newblock When does {$e^{-|\tau|}$} maximize {F}ourier extension for a conic
  section?
\newblock In {\em Harmonic analysis and convexity}, volume~9 of {\em Adv. Anal.
  Geom.}, pages 391--426. De Gruyter, Berlin, [2023] \copyright 2023.

\bibitem{Oliveira2024}
Diogo Oliveira~e Silva.
\newblock Sharp restriction theory.
\newblock {\em Commun. Math.}, 32(3):119--170, 2024.

\bibitem{OQ2018}
Diogo Oliveira~e Silva and Ren\'e Quilodr\'an.
\newblock On extremizers for {S}trichartz estimates for higher order
  {S}chr\"odinger equations.
\newblock {\em Trans. Amer. Math. Soc.}, 370(10):6871--6907, 2018.

\bibitem{OQ2020}
Diogo Oliveira~e Silva and Ren\'e Quilodr\'an.
\newblock A comparison principle for convolution measures with applications.
\newblock {\em Math. Proc. Cambridge Philos. Soc.}, 169(2):307--322, 2020.

\bibitem{Ramos2012}
Javier Ramos.
\newblock A refinement of the {S}trichartz inequality for the wave equation
  with applications.
\newblock {\em Adv. Math.}, 230(2):649--698, 2012.

\bibitem{Shao2009EJDE}
Shuanglin Shao.
\newblock Maximizers for the {S}trichartz and the {S}obolev-{S}trichartz
  inequalities for the {S}chr\"odinger equation.
\newblock {\em Electron. J. Differential Equations}, 2009(3):1--13, 2009.

\bibitem{Shao2016}
Shuanglin Shao.
\newblock On existence of extremizers for the {T}omas-{S}tein inequality for
  $\mathbb{S}^1$.
\newblock {\em J. Funct. Anal.}, 270(10):3996--4038, 2016.

\bibitem{Stein1993}
Elias~M. Stein.
\newblock {\em Harmonic analysis: real-variable methods, orthogonality, and
  oscillatory integrals}, volume~43 of {\em Princeton Mathematical Series}.
\newblock Princeton University Press, Princeton, NJ, 1993.
\newblock With the assistance of Timothy S. Murphy, Monographs in Harmonic
  Analysis, III.

\bibitem{Stovall2017}
Betsy Stovall.
\newblock Linear and bilinear restriction to certain rotationally symmetric
  hypersurfaces.
\newblock {\em Trans. Amer. Math. Soc.}, 369(6):4093--4117, 2017.

\bibitem{Stovall2020}
Betsy Stovall.
\newblock Extremizability of {F}ourier restriction to the paraboloid.
\newblock {\em Adv. Math.}, 360:1--18, 2020.

\bibitem{Tao2003}
Terence Tao.
\newblock A sharp bilinear restrictions estimate for paraboloids.
\newblock {\em Geom. Funct. Anal.}, 13(6):1359--1384, 2003.

\bibitem{Tao2004}
Terence Tao.
\newblock Some recent progress on the restriction conjecture.
\newblock In {\em Fourier analysis and convexity}, Appl. Numer. Harmon. Anal.,
  pages 217--243. Birkh\"auser Boston, Boston, MA, 2004.

\bibitem{WW2022}
Hong Wang and Shukun Wu.
\newblock An improved restriction in $\mathbb{R}^3$.
\newblock {\em arXiv:2210.03878}, 2022.

\end{thebibliography}
\end{document}